\definecolor{myellow}{RGB}{255,230,128}
\definecolor{gray20}{RGB}{204,204,204}
\definecolor{mygray}{RGB}{204,204,204}
\definecolor{mygreen}{RGB}{138,203,95}
\definecolor{myblue}{RGB}{77,151,214}
\acrodef{fe}[FE]{finite element}
\acrodef{dof}[DOF]{degree of freedom}
\acrodef{vef}[VEF]{vertex, edge, and face}
\acrodef{dg}[DG]{discontinuous Galerkin}
\acrodef{vms}[VMS]{variational multiscale}
\acrodef{sps}[SPS]{symmetric projection stabilization}
\acrodef{agfem}[agFEM]{aggregated finite element method}
\acrodef{xfem}[XFEM]{extended finite element method}
\acrodef{agfe}[agFE]{aggregated finite element}
\acrodef{pde}[PDE]{partial differential equation}
\newtheorem{theorem}{Theorem}[section]
\newtheorem{lemma}[theorem]{Lemma}
\newtheorem{definition}[theorem]{Definition}
\newtheorem{assumption}[theorem]{Assumption}
\newtheorem{method}[theorem]{Algorithm}
\newcommand{\opnorm}{\@ifstar\@opnorms\@opnorm}
\newcommand{\@opnorms}[1]{%
  \left|\mkern-1.5mu\left|\mkern-1.5mu\left|
   #1
  \right|\mkern-1.5mu\right|\mkern-1.5mu\right|
}
\newcommand{\@opnorm}[2][]{%
  \mathopen{#1|\mkern-1.5mu#1|\mkern-1.5mu#1|}
  #2
  \mathclose{#1|\mkern-1.5mu#1|\mkern-1.5mu#1|}
}
\def\opnormh#1{\opnorm{#1}_h}
\def\x{\boldsymbol{x}}
\def\Dom{\Omega}
\def\Domap{{\Dom}}
\def\Domex{{\Omega_{\rm act}}}
\def\Domin{{\Omega_{\rm in}}}
\def\Domart{{\Omega_{\rm art}}}
\def\Domext{{\Omega_{\rm ext}}}
\def\Domcut{{\Omega_{\rm cut}}}
\def\agints{\mathcal{F}_h}
\def\mesh{\mathcal{K}_h}
\def\meshex{\mathcal{K}^{\rm act}_h}
\def\meshin{\mathcal{K}^{\rm in}_h}
\def\meshact{\mathcal{K}_h}
\def\meshart{\mathcal{K}^{\rm art}_h}
\def\meshext{\mathcal{K}^{\rm ext}_h}
\def\meshcut{\mathcal{K}^{\rm cut}_h}
\def\meshag{\mathcal{T}_h}
\def\cell{K}
\def\uh{u_h}
\def\vh{v_h}
\def\shpf#1{\phi^{#1}}
\def\lset{\psi}
\def\hc{h_\cell}
\def\h{h}
\def\fespst{{\mathcal{V}_h}}
\def\fespin{{\mathcal{V}_h^{\rm in}}}
\def\fespex{{\mathcal{V}_h^{\rm act}}}
\def\fespag{{\mathcal{V}_h}}
\def\fespagm{{\breve{\mathcal{V}}_h}}
\def\nodes#1{\mathcal{N}(#1)}
\def\nodesm#1{\breve{\mathcal{N}}(#1)}
\def\nodesgin{\mathcal{N}_h^{\rm in}}
\def\nodesgex{\mathcal{N}_h^{\rm act}}
\def\nodesgou{\mathcal{N}_h^{\rm out}}
\def\grad{{\boldsymbol{\nabla}}}
\def\interp#1{{\mathcal{I}(#1)}}
\def\ext#1{\mathcal{E}_h(#1)}
\def\extm#1{\breve{\mathcal{E}}_h(#1)}
\def\owner#1{\mathcal{O}(#1)}
\def\jumpl{\lbrack\!\lbrack}
\def\jumpr{\rbrack\!\rbrack}
\def\jump#1{\jumpl #1 \jumpr}
\def\bsphih{\bs{\phi}_h}
\def\bszetah{\bs{\zeta}_h}
\def\vph#1{\bs{\varphi}_h(#1)}
\def\FEMPAR{{\texttt{FEMPAR}}}
\def\E{\mathcal{E}}
\def\x{\boldsymbol{x}}
\def\n{{\boldsymbol{n}}}
\def\lfesp{\mathcal{V}}
\def\moments{\Sigma}
\def\Pq{\mathcal{P}_q}
\def\Qq{\mathcal{Q}_q}
\def\Qqm{\breve{\mathcal{Q}}_q}
\def\Qbq{\bs{\mathcal{Q}}_q}
\def\Qbord#1{\bs{\mathcal{Q}}_{#1}}
\def\Pqh{\mathcal{P}_{q,h}}
\def\Pqhd{\mathcal{P}^-_{q,h}}
\def\Qqh{\mathcal{Q}_{q,h}}
\def\Qqhm{\breve{\mathcal{Q}}_{q,h}}
\def\Qqth{\mathcal{Q}_{2,h}}
\def\Qqthm{\breve{\mathcal{Q}}_{2,h}}
\def\Pqthd{{\mathcal{P}}^-_{1,h}}
\def\Qbqh{\bs{\mathcal{Q}}_{q,h}}
\def\Qbqhm{\breve{\bs{\mathcal{Q}}}_{q,h}}
\def\Pordhd#1{\mathcal{P}_{{#1},h}^-}
\def\Pbordhd#1{\bs{\mathcal{P}}_{{#1},h}^-}
\def\bsphi{\bs{\Phi}_h}
\def\bspsi{\bs{\Psi}_h}
\newcommand{\gradient}{\boldsymbol{\nabla}}
\tikzstyle{fig-ph}=[draw,minimum width=\textwidth, minimum height=\textwidth,text width=0.9\textwidth,color=red]
\tikzstyle{fig-ph-r}=[draw,minimum width=\textwidth, minimum height=1.4\textwidth,text width=0.9\textwidth,color=red]
\tikzstyle{fig-ph-rl}=[draw,minimum width=\textwidth, minimum height=0.5\textwidth,text width=0.9\textwidth,color=red]
\begin{document}

\title[Mixed aggregated FE methods for the  unfitted discretization of the Stokes problem]{Mixed aggregated finite element methods for the  unfitted discretization of the Stokes problem}

\author[S. Badia]{Santiago Badia}

\author[A. F. Mart\'in]{Alberto F. Mart\'in}

\author[F. Verdugo]{Francesc Verdugo}

\address{Department of Civil and Environmental Engineering. Universitat Polit\`ecnica de Catalunya, Jordi Girona 1-3, Edifici C1, 08034 Barcelona, Spain.}
\address{CIMNE – Centre Internacional de M\`etodes Num\`erics en 
Enginyeria, Parc Mediterrani de la Tecnologia, UPC, Esteve Terradas 5, 08860 
Castelldefels, Spain.}

\thanks{SB gratefully acknowledges the support received from the Catalan Government through the ICREA Acad\`emia Research Program. {FV gratefully acknowledges the support received from the  \emph{Secretaria d'Universitats i Recerca} of the Catalan Government in the framework of the Beatriu Pinós Program (Grant Id.: 2016 BP 00145). The authors thankfully acknowledge the computer resources at Marenostrum-IV and the technical support provided by the Barcelona Supercomputing Center (RES-ActivityID: FI-2018-1-0014). \\E-mails: {\tt sbadia@cimne.upc.edu} (SB), {\tt amartin@cimne.upc.edu} (AM)}, {\tt fverdugo@cimne.upc.edu} (FV)}

\date{\today}

\begin{abstract}
  In this work, we consider unfitted finite element methods for the numerical approximation of the Stokes problem. It is well-known that this kind of methods lead to arbitrarily ill-conditioned systems. In order to solve this issue, we consider the recently proposed aggregated finite element method, originally motivated for coercive problems. However, the well-posedness of the Stokes problem is far more subtle and relies on a discrete inf-sup condition. We consider mixed finite element methods that satisfy the discrete version of the inf-sup condition for body-fitted meshes, and analyze how the discrete inf-sup is affected when considering the unfitted case. We propose different aggregated mixed finite element spaces combined with simple stabilization terms, which can include pressure jumps and/or cell residuals, to fix the potential deficiencies of the aggregated inf-sup. We carry out a complete numerical analysis, which includes stability, optimal \emph{a priori} error estimates, and condition number bounds that are not affected by the small cut cell problem. For the sake of conciseness, we have restricted the analysis to hexahedral meshes and discontinuous pressure spaces. A thorough numerical experimentation bears out the numerical analysis. The aggregated mixed finite element method is ultimately applied to two problems with non-trivial geometries.
\end{abstract}

\maketitle


\noindent{\bf Keywords:} Embedded boundary; unfitted finite elements; Stokes; inf-sup; conditioning.


\def\ah#1#2{a_h(#1,#2)}
\def\bh#1#2{b_h(#1,#2)}
\def\ch#1#2#3#4{j_h(#1,#2,#3,#4)}
\def\Ah#1#2#3#4{A_h(#1,#2,#3,#4)}
\def\Lh#1#2{L_h(#1,#2)}
\def\bal{\begin{align}}
\def\eal{\end{align}}
\def\norm#1{\|#1\|}
\def\normh#1{\|#1\|_{*}}
\def\bcell{\partial \cell}
\def\normreg#1#2{\norm{#1}_{#2}}
\def\brackets#1{\left(#1\right)}
\def\interp#1{i_h(#1)}
\def\szint#1{\pi_h(#1)}
\def\bs#1{\boldsymbol{#1}}
\def\uh{\bs{u}_h}
\def\u{\bs{u}}
\def\ush{u_h}
\def\vsh{v_h}

\def\H{\bs{H}}
\def\L{\bs{L}}
\def\v{\bs{v}}
\def\u{\bs{u}}
\def\bbh{\bs{b}_\cell}
\def\bsh{{b}_\cell}
\def\vh{\bs{v}_h}
\def\wh{\bs{w}_h}
\def\ph{p_h}
\def\qh{q_h}
\def\ip#1#2{\brackets{#1,#2}_\Dom}
\def\ipreg#1#2#3{\brackets{#1,#2}_{#3}}
\def\n{{\bs{n}}}
\def\f{{\bs{f}}}
\def\g{{\bs{g}}}
\def\dn{{\partial_\n}}
\def\bou{{\Gamma}}
\def\bouD{{\bou_{\rm D}}}
\def\bouN{{\bou_{\rm N}}}
\def\pen{{\tau}}
\def\h{{h}}
\def\div{\grad \cdot}
\def\hcell{\h_\cell}
\def\kproj#1{\kappa_h^\perp{\brackets{#1}}}

\def\Vh{\boldsymbol{V}_{h}}
\def\Qh{Q_h}

\def\normregl2#1#2{{\| #1 \|}_{{#2}}}
\def\normregh1#1#2{{\| #1 \|}_{1,#2}}
\def\seminormregh1#1#2{{| #1 |}_{1,#2}}
\def\tnorm#1{\opnormh{#1}}
\def\half{\frac{1}{2}}

\def\pisz{\pi_h^{SZ}}

\section{Introduction}\label{sec:int}

Unfitted \ac{fe} techniques are receiving increasing attention since they are very appealing in many practical situations. Such techniques avoid the generation of \emph{body-fitted} meshes, which is a serious bottleneck in large scale simulations. They are particularly well-suited to multi-phase and multi-physics applications with moving interfaces (e.g., fracture mechanics, fluid-structure interaction \cite{badia_fluidstructure_2008}, or free surface flows), and in applications with varying domains (e.g., shape or topology optimization frameworks, additive manufacturing and 3D printing simulations \cite{chiumenti_numerical_2017}, stochastic geometry problems). Unfitted \ac{fe} methods have been named in different ways. When designed for capturing interfaces, they are usually denoted as \ac{xfem} \cite{belytschko_arbitrary_2001}, whereas they are denoted as { \emph{embedded}, \emph{immersed}, or \emph{unfitted} methods} when the motivation is to simulate a problem using a (usually simple) background mesh { (see, e.g., the {cutFEM} method \cite{burman_cutfem:_2015}).

Yet useful, unfitted \ac{fe} methods have known drawbacks. They pose problems to numerical integration, imposition of Dirichlet boundary conditions, and lead to ill conditioned problems { \citep{de_prenter_condition_2017}}. For most of the unfitted \ac{fe} techniques, the condition number of the discrete linear system does not only depend on the characteristic element size of the background mesh, but also on the ratios for all cut cells of the total cell volume and the cell volume inside the physical domain, which can be arbitrarily small, leading to the so-called \emph{small cut cell problem}. Methods based on fictitious material \cite{schillinger_finite_2014} require a penalty term that goes to zero with a power of the mesh size for optimal convergence and thus, are also affected by these problems. Preconditioned iterative linear solvers suitable for standard \ac{fe} methods are not robust for these formulations. Recently, a robust domain decomposition preconditioner able to deal with cut cells has been proposed in \cite{badia_robust_2017} for first order methods, but these preconditioners still require some special treatment for the robust direct solution of local-to-subdomain systems. 

The authors have recently proposed in \cite{Badia2017} an unfitted \ac{fe} formulation, referred to as the \ac{agfem}, that fixes the ill conditioning issues associated with cut cells for elliptic \acp{pde}. This novel method relies on the so-called \ac{agfe} spaces, grounded on cell aggregation techniques and judiciously chosen linear constraints for conflictive \acp{dof} with respect to interior ones. This approach can be applied to grad-conforming (globally continuous) spaces and discontinuous \ac{fe} spaces of arbitrary order. The \ac{agfem} leads to a well-posed Galerkin formulation of elliptic problems, viz., no stabilization terms are needed and the method is thus consistent. Furthermore, the resulting linear system have condition numbers that scale only with the element size of the background mesh in the same way as in standard \ac{fe} methods for body-fitted meshes. These methods have been implemented in \FEMPAR{}, a large scale \ac{fe} software package \cite{badia_fempar:_2017,_fempar_????}.

  Compared to other existing approaches, the most salient one is the \emph{ghost penalty} formulation used in the CutFEM method \cite{burman_cutfem:_2015,burman_fictitious_2012}. In any case, this approach  leads to weakly non-consistent algorithms and requires to compute high order derivatives on faces for high order \acp{fe}, which are not at our disposal in general \ac{fe} codes and are expensive to compute, certainly complicating the implementation of the methods and harming code performance. For B-spline approximations, one can consider the so-called \emph{extension} or \emph{extrapolation} techniques (see, e.g., \citep{hollig_weighted_2001,ruberg_subdivision-stabilised_2012,ruberg_fixed-grid_2014}). These works are close to the \ac{agfem} \cite{Badia2017} in the sense that the  problematic \acp{dof} associated with B-splines with small support inside the physical domain are eliminated by constraining them as a linear combination of \emph{well-posed} \acp{dof}. Such aggregation approaches are not new in \ac{dg} methods (see, e.g., \cite{helzel_high-resolution_2005,johansson_high_2013,kummer_extended_2013}), for which the situation is much easier, since no conformity must be kept. In fact, some aggregation techniques in \ac{dg} \cite{helzel_high-resolution_2005,johansson_high_2013,kummer_extended_2013} can be casted as discontinuous \acp{agfem}.
  
The use of mixed \ac{fe} methods on unfitted meshes has been explored in previous works. The combination of ghost penalty stabilization and inf-sup stable elements for the unfitted \ac{fe} approximation of the Stokes problem was originally addressed in \cite{Burman2014} for triangular meshes in two dimensions. The analysis therein relies on the continuous inf-sup condition on the interior domain, viz., the union of interior cells (not intersecting the boundary), in order to prove pressure stability in interior cells, whereas cut cell pressure stability relies on ghost penalty stabilization. The extension of this work to interface Stokes problems for the MINI element has been proposed in \cite{hansbo_cut_2014} (see also \cite{Cattaneo2015} for a similar strategy). It has been observed in \cite{Guzman2017} that the analysis of unfitted and \ac{xfem} \ac{fe} methods, e.g., in \cite{Burman2014,hansbo_cut_2014,Cattaneo2015}, is not fully satisfactory, because it relies on the inf-sup condition of the interior domain, which has an inf-sup constant that depends on the mesh refinement and can tend to zero. Guzm\'an and Olshanskii follow a different approach in \cite{Guzman2017}, proving stability and error estimates for some families of inf-sup stable elements on triangles and tetrahedra. We refer the reader to \cite{Burman2014,Guzman2017} for more references on this subject in the frame of \ac{xfem}. As an alternative to mixed \ac{fe} methods, globally stabilized residual-based and pressure jump first order schemes combined with ghost penalty stabilization have been used in \cite{massing_stabilized_2014}. Global residual-based stabilization has also been used in \cite{ruberg_subdivision-stabilised_2012,ruberg_fixed-grid_2014} for B-spline approximations.

In this work, we propose to combine the \ac{agfem} approach, which fixes the small cut cell problem for the numerical approximation of elliptic \acp{pde}, with mixed \ac{fe} spaces. Unsurprisingly, the development of mixed \ac{agfe} spaces that satisfy a discrete version of the inf-sup condition is not straightforward. The discrete inf-sup condition requires a perfect balance of the velocity and pressure spaces, whereas the boundary-cell intersections can be arbitrary, leading to a large set of possible cell aggregates geometries. In this work, we consider hexahedral meshes and arbitrary order mixed \ac{fe} spaces with discontinuous pressures, and analyze the potential deficiencies of the \emph{unfitted} inf-sup in terms of a set of \emph{improper} aggregates and interfaces that will require additional stabilization. An abstract stability analysis under some assumptions about such stabilization allows us to define effective stabilization terms. We propose two algorithms. The first one combines a standard aggregated tensor-product Lagrangian \ac{fe} with interior residual-based and pressure jump face stabilization on improper aggregates and faces, respectively. The second one makes use of an \ac{agfe} space in terms of a serendipity-based extension of tensor-product Lagrangian \acp{fe} combined with pressure jump stabilization on improper faces. The resulting schemes can be used in quadrilateral/hexahedral meshes, the order of approximation can be selected by the user, the algorithm does not require to compute (higher than order one) derivatives on cell boundaries (unlike ghost penalty/cutFEM approaches), and it involves minimal stabilization (e.g., only pressure jump stabilization on a very small subset of faces \emph{close} to the interface). A complete numerical analysis shows the uniform stability (that does not rely on the potentially ill inf-sup condition on the union of interior cells), optimal \emph{a priori} error estimates, and condition number bounds with respect to the mesh size and cell boundary intersection. Another remarkable feature of our approach is that it exposes a high degree of message-passing parallelism, and thus it is suitable for the development of a highly scalable parallel unfitted \ac{fe} framework on distributed memory computers, so far still missing in the literature. In fact, a highly scalable parallel implementation of \ac{agfem}, grounded on \texttt{p4est} for efficient octree handling \cite{BursteddeWilcoxGhattas11}, is under development in \FEMPAR{} \cite{badia_fempar:_2017,_fempar_????}. Apart from their ability of controlling geometry approximation errors by local adaptation in regions of high geometric variability,  octree meshes can be very efficiently generated, refined and coarsened, partitioned, and 2:1 balanced on hundreds of thousands of processors \cite{BursteddeWilcoxGhattas11}, being the latter the main reason why we favour this sort of meshes in our approach.

The outline of this work is as follows. In Sect. \ref{sec:pro_sta}, we introduce the Stokes problem and, in Sect. \ref{sec:fe_spa}, a brief introduction to \ac{fe} spaces and some notation follows. Sect. \ref{sec:agg_fe_spa} is devoted to the definition of \ac{agfe} spaces and their mathematical properties. A discrete \ac{agfem} for the approximation of the Stokes problem is proposed in Sect. \ref{sec:app_sto}, in which the stabilization terms are not defined yet. Sect. \ref{sec:num_ana} is devoted to a complete numerical analysis of mixed \acp{agfem}. More specifically, in Sect. \ref{ssec:abs_sta_ana}, we perform an abstract stability analysis under some assumptions over the mixed \ac{agfe} space and the stabilization terms. Two different algorithms that satisfy these assumptions are proposed in Sect. \ref{ssec:sta_mix_fe_pre_sta}. \emph{A priori} error estimates and condition number bounds that are independent of the cut cell intersection with the boundary are proved in  Sect. \ref{ssec:a_pri_err_est} and Sect. \ref{ssec:cond_nums}, respectively. A complete set of numerical experiments can be found in \ref{sec:num_exp}}. To close this work, some conclusions are drawn in Sect. \ref{sec:concl}.

\section{Problem statement}
\label{sec:pro_sta}

Let us consider an open and bounded physical domain $\Dom \subset \mathbb{R}^d$ (where $d = 2, \, 3$ is the physical space dimension) with Lipschitz boundary $\bou$, occupied by a viscous fluid. We consider Dirichlet boundary conditions on $\bou$ for brevity in the exposition; the introduction of Neumann boundary conditions is straightforward. The Stokes problem, after scaling the pressure with the inverse of the diffusion coefficient, reads as: find the velocity field $\u: \Dom \rightarrow \mathbb{R}^d$ and the pressure field $p : \Dom \rightarrow \mathbb{R}$ such that 
\begin{align}
-\Delta \u + \gradient p = \f  \quad \text{in } \ \Omega, \qquad \gradient\cdot \u = 0  \quad \text{in } \ \Omega, \qquad \u=\g  \quad&\text{on } \ \Gamma,
\label{eq:sto_eqs}
\end{align}
where 
$\f$ is the body force and $\g$ is the prescribed Dirichlet data, which must satisfy $\int_\bou \g \cdot \n = 0$, where $\n$ stands for the outward normal. In order to uniquely determine the pressure, we additionally enforce that $\int_\Dom p = 0$.

We use standard notation for Sobolev spaces (see \cite{brezis_functional_2010}). In particular, the $L^2(\omega)$ scalar product will be denoted by $(\cdot,\cdot)_\omega$ for some $\omega \subset \mathbb{R}^d$. Making abuse of notation, we represent the $H^1(\omega)$ duality pairing the same way. $L^2_0(\omega)$ is the subspace of functions in $L^2(\omega)$ with zero mean value.
For a Sobolev space $X$, we denote its norm by $\| \cdot \|_{X}$. In particular, the $L^2(\omega)$ norm is denoted by $\normregl2{\cdot}{\omega}$, whereas the $H^1(\omega)$ norm as $\normregh1{\cdot}{\omega}$. The seminorm on the Sobolev space $W^{k,p}(\omega)$ is denoted by $| \cdot |_{W^{k,p}(\omega)}$, or simply $| \cdot |_{1,\omega}$ for $H^1(\omega)$. Given a function $g \in H^{\frac{1}{2}}(\partial \omega)$, the subspace of functions in $H^1(\omega)$ with trace equal to $g$ is represented with $H^1_g(\omega)$. Vector-valued Sobolev spaces are represented with boldface letters. 

Let us assume that $\f\in \L^{2}(\Dom)$ and $\g \in \H^{\frac{1}{2}}(\bou)$. The weak form of the Stokes problem \eqref{eq:sto_eqs} reads as follows: find $(\u, \, p) \in \H^1_{\g}(\Dom) \times L_0^2(\Dom)$ such that
\begin{align}\label{eq:wea_for}
\ip{\grad \u}{\grad \v} -
\ip{p}{\grad \cdot \v} - \ip{q}{\grad \cdot \u} = \ip{\f}{\v},
\end{align}
for any $(\v, q) \in  \H^1_{\bs{0}}(\Dom) \times L_0^2(\Dom)$. The well-posedness of this linear problem relies on the fact that the divergence operator on $\H^1_{\bs{0}}(\Dom)$ is surjective in $L^2_0(\Dom)$. There exists a constant $\beta$ that depends on $\Omega$ such that
\begin{align}
\inf_{p \in L^2_0(\Omega)} \sup_{\v \in \H^1_{\bs 0}(\Dom)} \frac{\ip{p}{\grad \cdot \v}}{\normregl2{p}{\Dom} \normregh1{\v}{\Dom}} \geq \beta > 0.\label{eq:inf_sup_con}
\end{align}
In the following exposition, we consider the numerical approximation of this problem by using \ac{fe} methods. In particular, we are interested in the discretization of the Stokes problem when using unfitted \ac{fe} methods, i.e., the mesh is not fitted to $\Dom$. 

\section{Finite element spaces}\label{sec:fe_spa}

Let us consider an open polyhedral domain $\omega$ and its partition $\mesh(\omega)$ into a set of cells. We may consider the case in which all cells are hexahedra/quadrilaterals (hex mesh) or all cells are tetrahedra/triangles (tet mesh). At any cell $\cell \in \mesh(\omega)$, we define the local \ac{fe} spaces as follows. Using the abstract definition of Ciarlet, a \ac{fe} is represented by the triplet $\{ \cell, \lfesp, \moments \}$, where $\cell$ is a compact, connected, Lipschitz subset of $\mathbb{R}^d$, $\lfesp$ is a vector space of functions, and $\moments$ is a set of linear functionals that form a basis for the dual space $\lfesp'$. The elements of $\moments$ are the so-called \acp{dof} of the FE; we denote the number of \acp{dof} as $n_\Sigma$. The \acp{dof} can be written as $\sigma^a$ for $a \in \mathcal{N}_\Sigma \doteq \{ 1, \ldots,n_\Sigma\}$. We can also define the basis $\{\shpf{a}\}_{a \in \mathcal{N}_\moments}$ for $\lfesp$ such that $\sigma^a(\shpf{b}) = \delta_{a b}$ for $a,\, b \in \mathcal{N}_\moments$. These functions are the so-called \emph{shape functions} of the FE, and there is a one-to-one mapping between shape functions and \acp{dof}.

In this work, we consider three different concretizations of the vector space $\lfesp$: (1) the space $\Pq(\cell)$ of polynomials of degree less or equal to $q$;
(2) the space $\Qq(\cell)$ of polynomials of degree less or equal to $q$ with  respect to each reference space coordinate; (3) the space $\Qqm(\cell)$ of polynomials of superlinear degree less or equal to $q$ (see \cite{Arnold2011} for more details). $\Qqm(\cell)$ on hex meshes leads to the \emph{serendipity} \ac{fe}. For the sake of simplicity, we assume that all cells in the mesh have the same topology and (for a given field) the same polynomial order.\footnote{The polynomial spaces are defined in the physical space cell, instead of relying on a reference cell and a map from the reference to the physical space. Both approaches are equivalent for affine maps, whereas the second one is more appealing due to lower computational cost. The convergence properties of serendipity \acp{fe} are deteriorated if the map is not affine \cite{Arnold2000}. Fortunately, the equivalence holds for the Cartesian hex meshes below.} 

In order to build globally continuous \ac{fe} spaces, we denote by $\nodes{\cell}$ the set of $n_{\moments}$ Lagrangian nodes of order $q$ of cell $\cell$ for $\Pq(\cell)$ in tets and $\Qq(\cell)$ in hexs. The set of nodal values, i.e., $\sigma^a(v) \doteq v(\x^a)$ for $a \in \nodes{\cell}$, is a basis for the dual space $\lfesp'$. By definition, it holds $\shpf{a}(\x^b) = \delta_{ab}$, where $\x^b$ are the space coordinates of node $b$ in the corresponding set of nodes.  Next, we assume that there is a local-to-global \ac{dof} map such that the resulting global space is $\mathcal{C}^0$ continuous. It leads to the following $\mathcal{C}^0(\omega)$ global \ac{fe} spaces: (1) the space $\Pqh(\omega)$ of functions such that its cell restriction belongs to $\Pq(\cell)$ for a tet mesh; (2) the space $\Qqh(\omega)$ (resp. $\Qqhm(\omega)$) of functions such that its cell restriction belongs to $\Qq(\cell)$ (resp. $\Qqm(\cell)$) for a hex mesh. We note that for discontinuous \ac{fe} spaces, the definition of \ac{dof} is flexible, since no inter-cell continuity must be enforced. We will make use of the global space  $\Pqhd(\omega)$ of piecewise discontinuous functions that belong to $\Pq(\cell)$, for an \emph{arbitrary} cell topology.
 The spaces of vector-valued functions with components in these spaces are represented with boldface letters.

Given a function $v$, we define the \emph{local interpolator} for nodal Lagrangian \acp{fe}, as
\begin{align}\label{eq:loc_int}
  \pi^I_K(v) \doteq \sum_{a \in \nodes{\cell} } \sigma^a (v) \shpf{a} =  \sum_{a \in \nodes{\cell} } v(\x^a) \shpf{a},  \qquad K \in \mesh(\omega).
\end{align}
It is easy to check that the interpolation operator is in fact a projection. The global interpolator $\pi_h^I(\cdot)$ is defined as the sum over the cells of the corresponding local interpolators, i.e., $\pi_h^I(v) = \sum_{\cell \in \mesh(\omega)} \pi_\cell^I(v)$. 

\section{Aggregated finite element spaces}\label{sec:agg_fe_spa}

In this section, we define \ac{agfe} spaces. We refer to \cite{Badia2017} for more details. First, we introduce some geometrical concepts related to the use of embedded boundary methods, the cell aggregation algorithm, and the map between \acp{vef} on cut cells and aggregates. Next, we use the geometrical aggregation to define \ac{agfe} spaces on unfitted meshes. Finally, we provide some trace and inverse inequalities, together with approximability properties that will be used in the following sections to analyze the stability and to obtain \emph{a priori} error estimates. In the following, we assume that hex meshes are being used. In practice, we are interested in Cartesian hex meshes, where all the cells can be represented as the scaling of a $d$-cube. This restriction simplifies implementation issues, since polynomial bases in the physical space can be obtained as the mapped reference cell polynomial bases, a fact that does not hold for general (first order) hex meshes. However, \ac{agfe} spaces can readily be obtained for tet meshes using the ideas below. 


\subsection{Embedded boundary setup and cell aggregation} \label{sec:bac_mes}

As usual for embedded boundary methods, we consider an \emph{artificial} domain $\Domart$ with a simple shape that can easily be meshed using a conforming Cartesian grid $\meshart \doteq \mesh(\Domart)$ of characteristic size $h$ that includes the \emph{physical} domain $\Dom \subset\Domart$  (see Fig. ~\ref{fig:immersed-setup-a}). Let us assume for the sake of simplicity that the domain boundary is implicitly defined as the zero level-set of a given scalar function $\lset^\mathrm{ls}$, i.e., $\bou \doteq\{ \x\in\mathbb{R}^d:\lset^\mathrm{ls}(\x)=0\}$. In practice, we consider an approximation $\Omega_h$ of $\Omega$, e.g., using a marching cubes-like algorithm, which also leads to an approximated boundary $\Gamma_h$. Even though the actual computational domain is $\Dom_h$, we will omit the subscript for the sake of conciseness in the notation, unless the distinction is important.

Cells in $\meshart$ can be classified as follows: a cell
$\cell \in \meshart$ such that $\cell \subset \Omega$ is an
\emph{internal cell}; if $\cell \cap \Omega = \emptyset$, $\cell$ is
an \emph{external cell}; otherwise, $\cell$ is a \emph{cut cell} (see
Fig.~\ref{fig:immersed-setup-b}).  The set of interior (resp.,
external and cut) cells is represented with $\meshin$ and its union
$\Domin \subset \Omega$ (resp., $(\meshext,\Domext)$ and
$(\meshcut, \Domcut))$. Furthermore, we define the set of \emph{active
  cells} as $\meshact \doteq \meshin \cup \meshcut$ and its union
$\Domex$. We assume that the background mesh
is \emph{quasi-uniform} (see, e.g.,
\cite[p.107]{brenner_mathematical_2010}) to reduce technicalities, and
define a characteristic mesh size $\h$. 

\begin{figure}[ht!]
  \centering
  \begin{subfigure}{0.24\textwidth}
    \centering
    \includegraphics[width=0.9\textwidth]{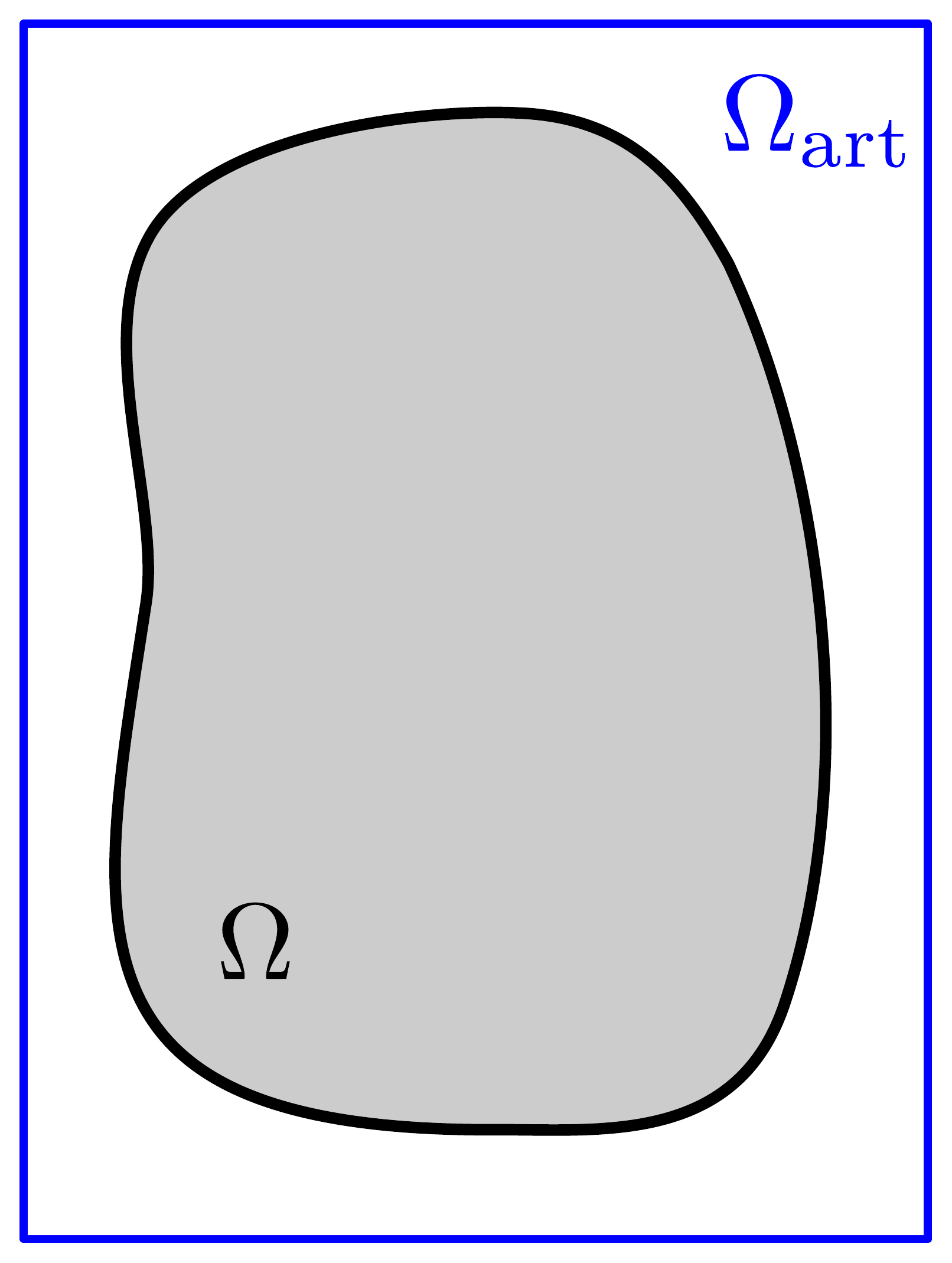}
    \caption{}
    \label{fig:immersed-setup-a}
  \end{subfigure}
  \begin{subfigure}{0.24\textwidth}
    \centering
    \includegraphics[width=0.9\textwidth]{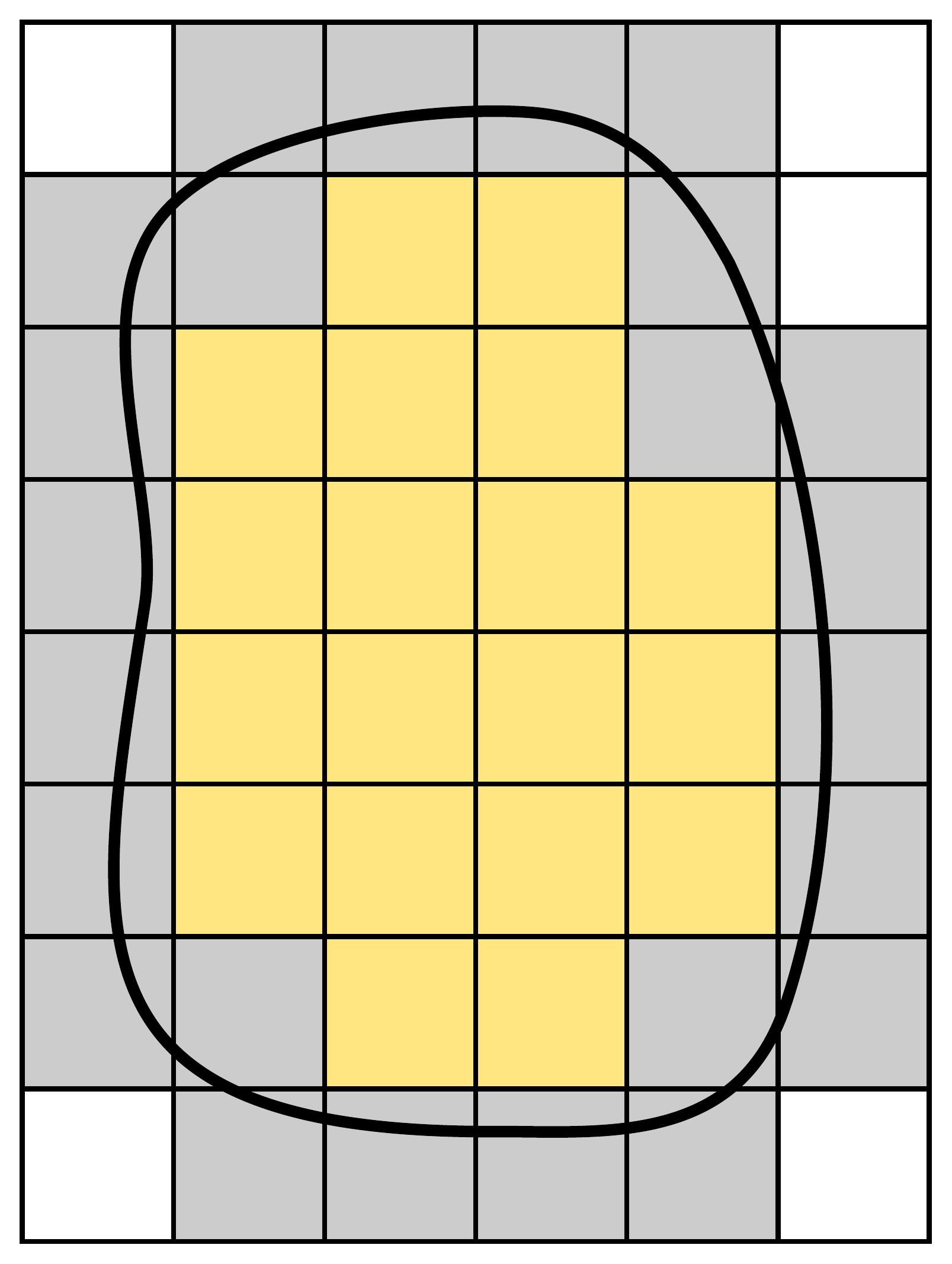}
    \caption{}
    \label{fig:immersed-setup-b}
  \end{subfigure}
    \begin{subfigure}{0.1\textwidth}
    \begin{tabular}{l}
    \tikz{\draw[fill=myellow]  (0,0) rectangle (1.4em,1.4em);} \emph{internal} cells
    \\
    \tikz{\draw[fill=gray20]  (0,0) rectangle (1.4em,1.4em);} \emph{cut} cells
    \\
    \tikz{\draw  (0,0) rectangle (1.4em,1.4em);} \emph{external} cells
    \end{tabular}
  \end{subfigure}
  \caption{Embedded boundary setup.}
  \label{fig:immersed-setup}
\end{figure}
We can also consider a partition  of $\Omega$ into non-overlapping cell aggregates composed of cut cells and \emph{only} one interior cell such that each aggregate is connected, using, e.g., the strategy described in Algorithm \ref{alg:agg_sch} below.

\begin{method}[Cell aggregation algorithm]\

\begin{enumerate}
\item Mark all interior cells as touched and all cut cells as untouched.
\item For each untouched cell,  if there is at least one touched cell connected
to it through a facet  $F$ such that $F \cap \Omega \neq \emptyset$, we aggregate the cell to the touched cell belonging to the aggregate containing the closest interior cell. If more than one touched cell fulfills this requirement, we choose one arbitrarily, e.g., {the cell connected via the facet with more area inside the physical domain, or the one with smaller global label.}
\item Mark as touched all the cells aggregated in step 2. 
\item Repeat steps 2. and 3. until all cells are aggregated.
\end{enumerate}
\label{alg:agg_sch}
\end{method}

Fig. \ref{fig:aggr-steps} shows an illustration of each step in Alg.
\ref{fig:aggr-steps}. The black thin lines represent the boundaries of the
aggregates. Note that from step 1 to step 2, some of the lines between adjacent
cells are removed, meaning that the two adjacent cells have been merged in the
same aggregate. The aggregation schemes can be easily applied to arbitrary
spatial dimensions.

\begin{figure}[ht!]
  \centering
  \begin{subfigure}{0.99\textwidth}
    \centering
    \begin{small}
      \begin{tabular}{llll}
         \tikz{\fill[fill=myellow]  (0,0) rectangle (1.4em,1.4em);} touched 
         &
         \tikz{\fill[fill=gray20]    (0,0) rectangle (1.4em,1.4em);} untouched
         &
         \tikz{ \draw[line width=0.5pt] (0,0) -- (2em,0);} Aggregates' boundary
         &
         \tikz{ \draw[line width=2pt] (0,0) -- (2em,0);} $\partial \Omega$
      \end{tabular}
    \end{small}
  \end{subfigure}
  \par
  \begin{subfigure}{0.24\textwidth}
    \centering
    \includegraphics[width=0.9\textwidth]{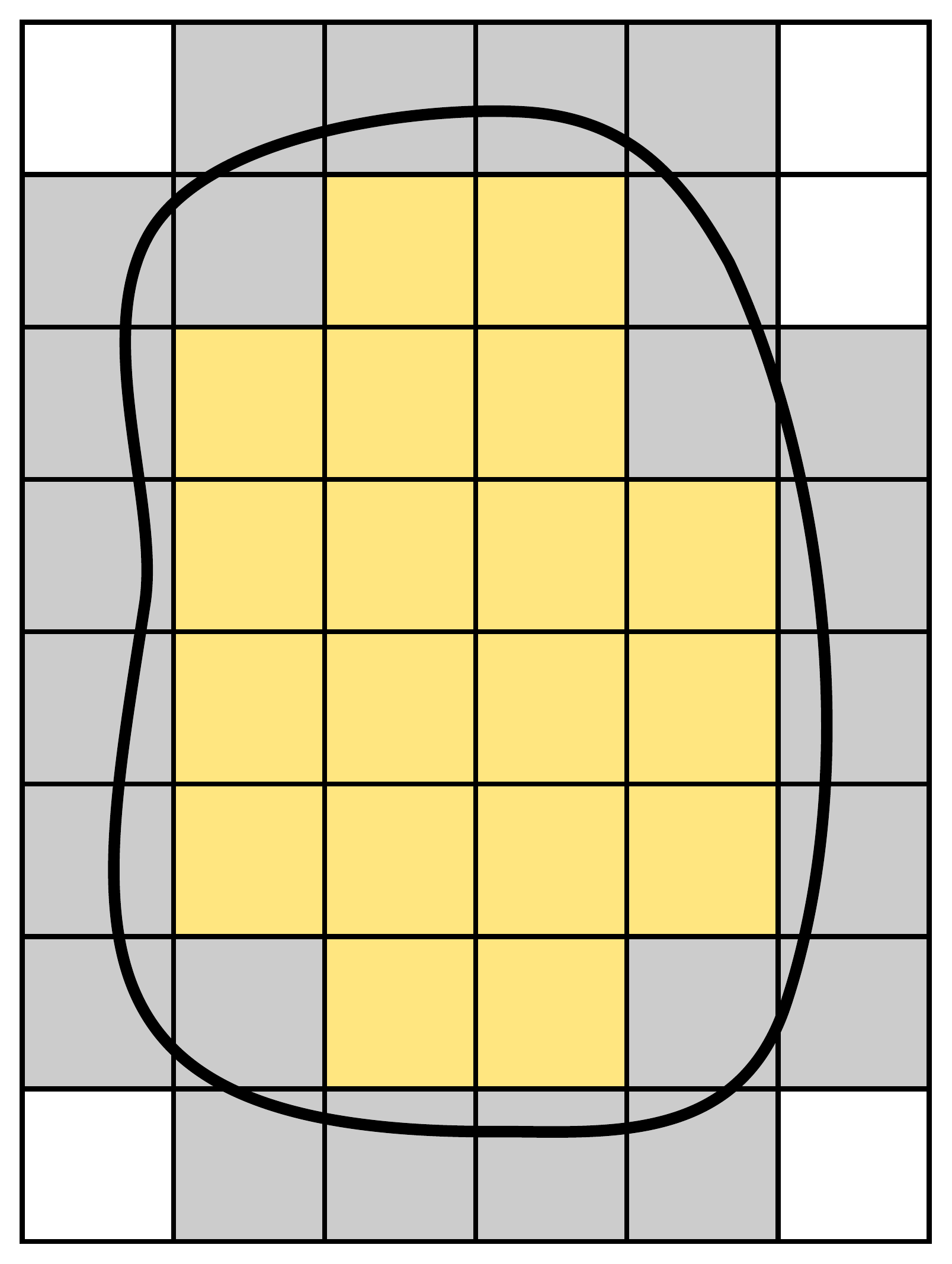}
    \caption{Step 1.}
    \label{fig:aggr-steps-a}
  \end{subfigure}
  \begin{subfigure}{0.24\textwidth}
    \centering
    \includegraphics[width=0.9\textwidth]{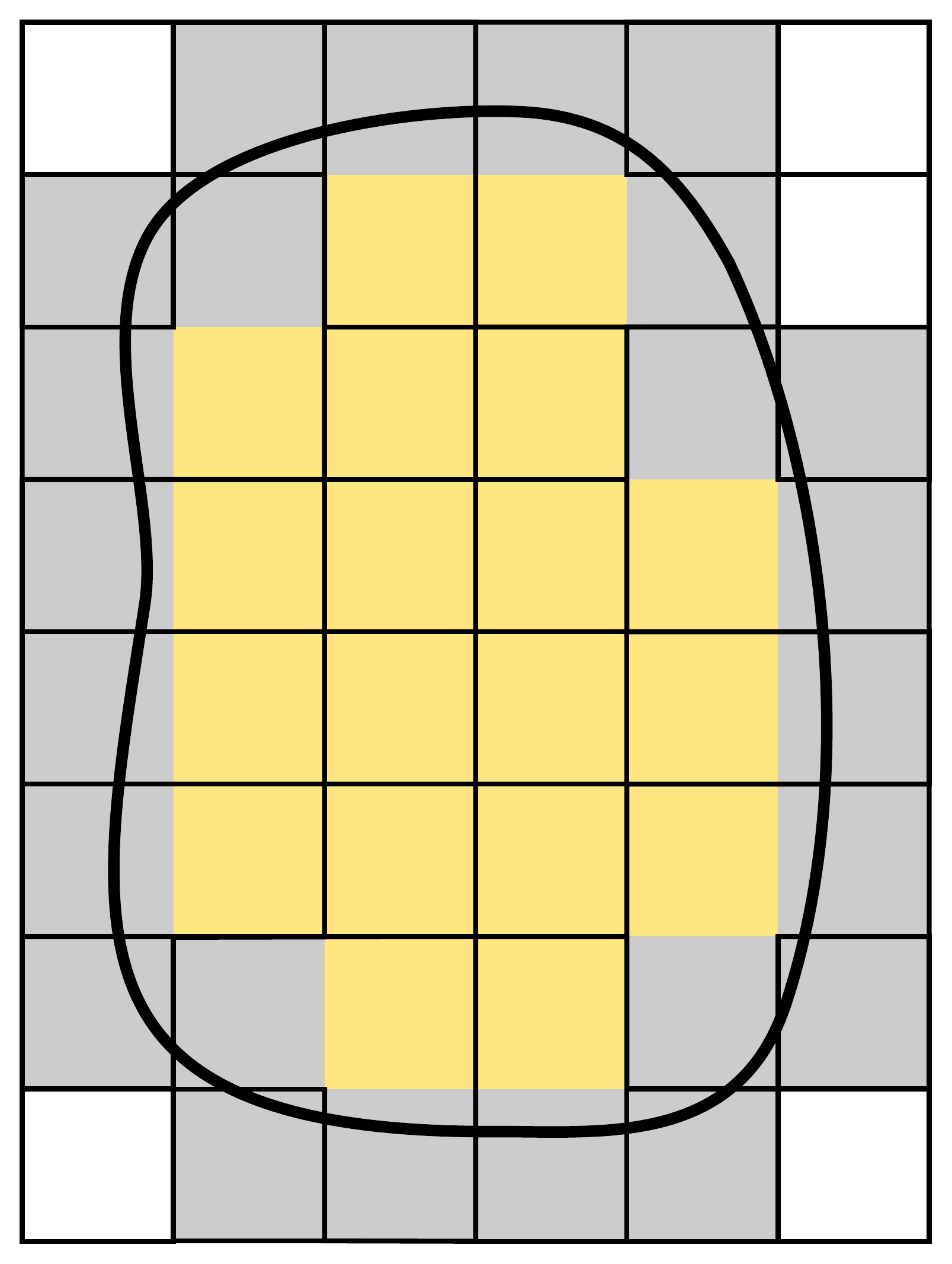}
    \caption{Step 2.}
    \label{fig:aggr-steps-b}
  \end{subfigure}
  \begin{subfigure}{0.24\textwidth}
    \centering
    \includegraphics[width=0.9\textwidth]{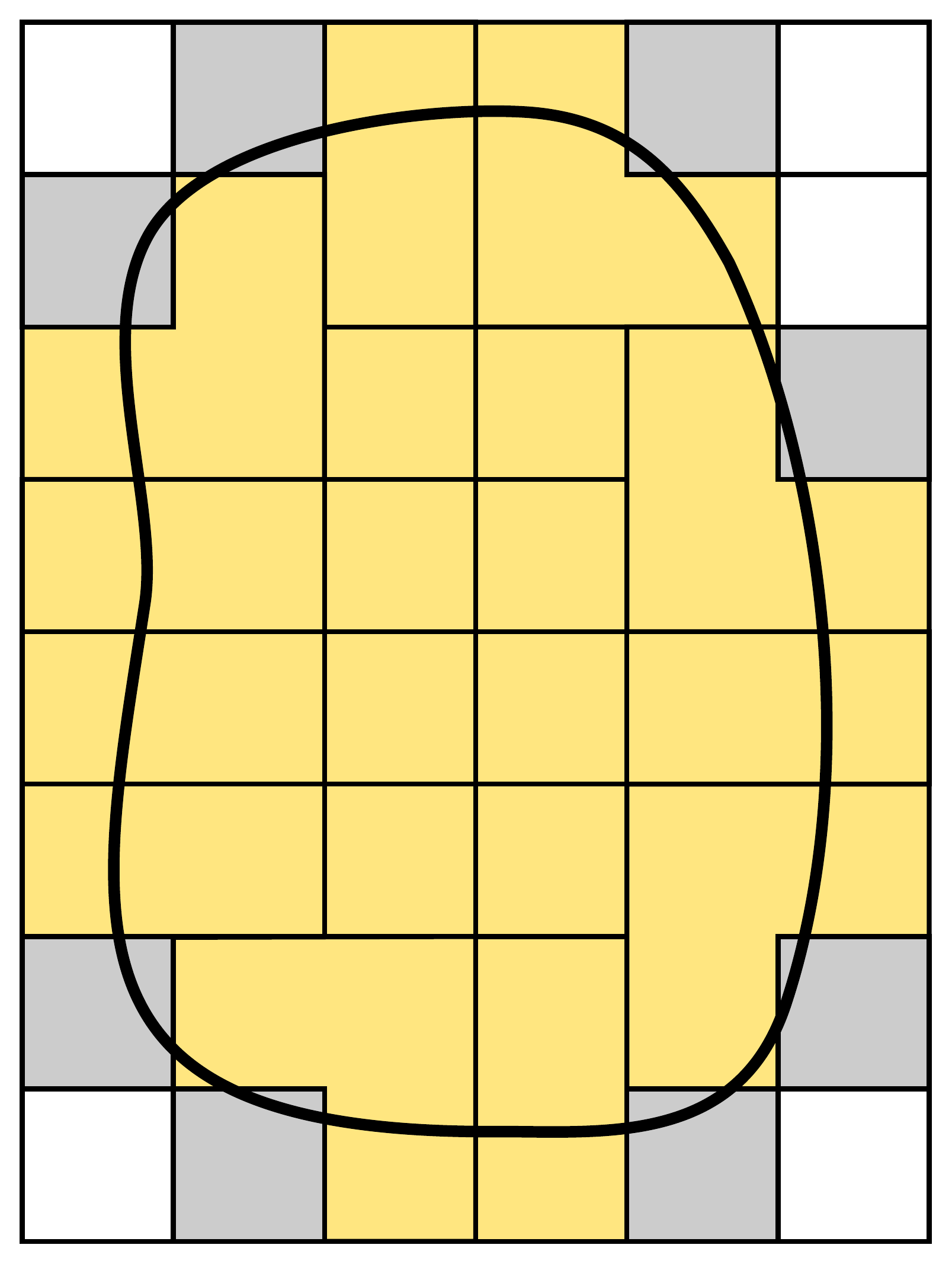}
    \caption{Step 3.}
    \label{fig:aggr-steps-c}
  \end{subfigure}
  \begin{subfigure}{0.24\textwidth}
    \centering
    \includegraphics[width=0.9\textwidth]{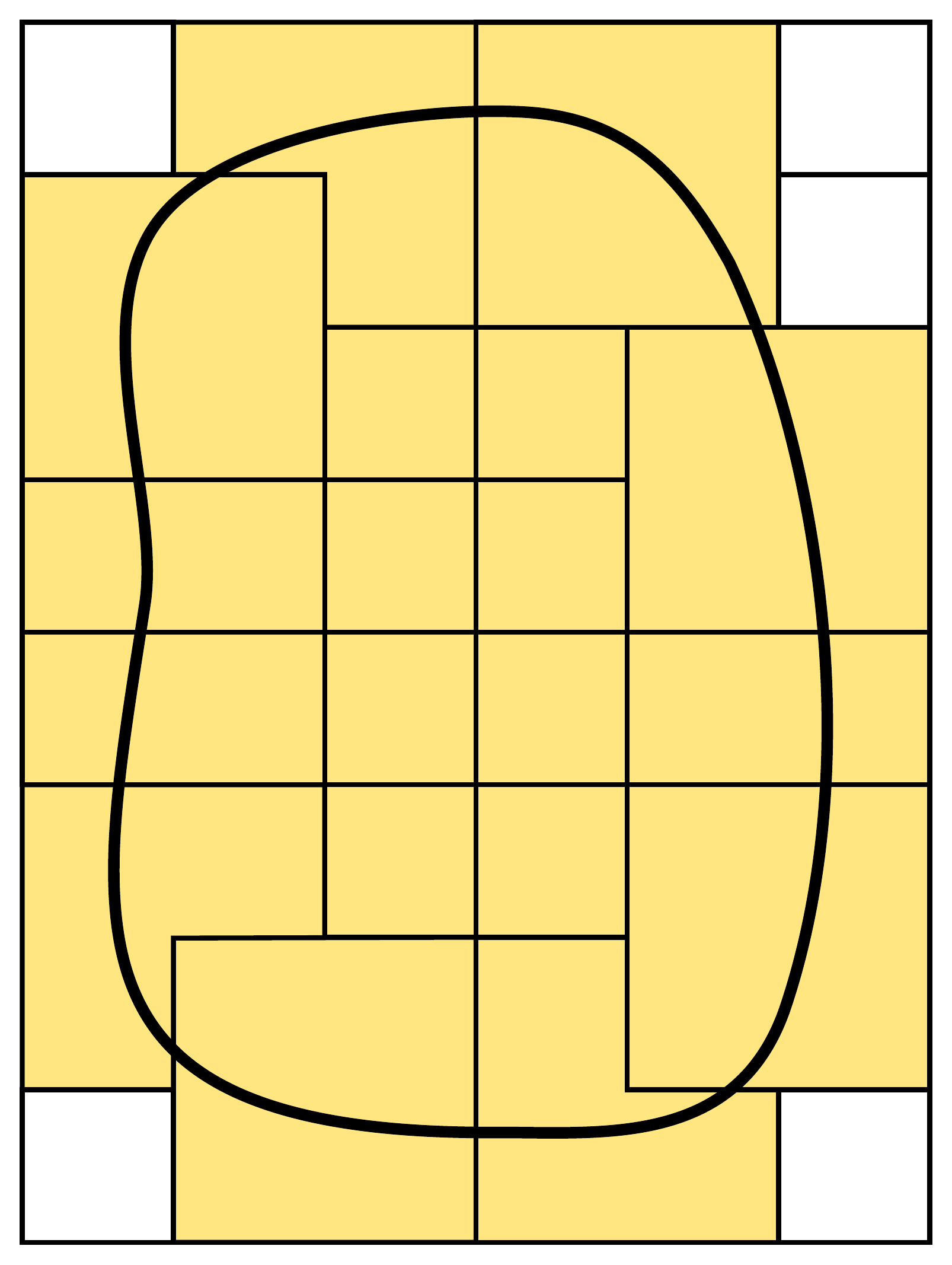}
    \caption{Step 4.}
    \label{fig:aggr-steps-d}
  \end{subfigure} 
  \caption{Illustration of the cell aggregation scheme defined in Algorithm \ref{alg:agg_sch}. We note that the definition of an aggregate in \eqref{eq:def_agg} is such that it only considers the part of the aggregated cells inside $\Omega$ as this simplifies the notation in the numerical analysis.}
  \label{fig:aggr-steps}
\end{figure}

In the forthcoming sections, we need an upper bound of the size of the aggregates generated with Algorithm \ref{alg:agg_sch} in terms of the cell mesh size $h$, i.e., the characteristic size of an aggregate is bounded by $\gamma h$ for some $\gamma$ independent of $h$ and the cut cell intersection with the boundary. We refer to \cite[Lem. 2.2]{Badia2017} and the subsequent discusion for a bound of this quantity, supported with the numerical experiments in \cite[Sect. 6.3]{Badia2017}.

Alg. \ref{fig:aggr-steps} leads to another partition $\meshag$ into aggregates, where an aggregate is defined in terms of a set of cells as follows:
\begin{equation} A \doteq \{ \cup_{i=0}^{n_A} K_i \cap \Dom : \ K_i \in \mesh \}, \qquad \forall A \in \meshag,\label{eq:def_agg}\end{equation}
where (without loss of generality) $K_0 \in \meshin$ is the owner interior cell, also represented with $\mathcal{O}(A)$. By construction of Algorithm \ref{alg:agg_sch}, it holds: 1) $n_A \geq 0$; 2) interior cells that have no aggregated cut cells ($n_A = 0$) remain the same; 3) there is only one interior cell per aggregate, i.e., $K_i \not\subset \Omega$ for $i>0$; 4) every cut cell belongs to one and only one aggregate.

For a interior/cut cell $K \in \meshact$, we define its owner (interior) cell $\mathcal{O}(K)$ as the owner $\mathcal{O}(A)$ of the only aggregate $A \in \meshag$ that contains the cell, i.e., $K \cup A$ has non-zero measure in dimension $d$. Thus, the owner of an interior cell is the cell itself. 
\footnote{Other aggregation algorithms could be considered, e.g., touching in the first step of the algorithm not only
the interior cells, but also cut cells \emph{without} the small cut cell problem. It can be implemented by defining the quantity $\eta_\cell \doteq \frac{| \cell \cap \Omega|}{ |\cell|}$ and touch in the first step not only the interior cells but also any cut cell with $\eta_\cell > \eta_0 > 0$ for a fixed value $\eta_0$.}


We can also construct a map that, given an \emph{outer} \ac{vef}, i.e., a \ac{vef} that belongs to at least one cut cell in $\meshcut$ but does not belong to any interior cell in $\meshin$, provides its aggregate owner among all the aggregates that contain it (see Fig.  \ref{fig:def-outnodmap}). This map can be arbitrarily built, e.g., we can consider the \emph{smallest} aggregate that contain the \ac{vef}. 
The map between the outer \ac{vef} $b$ and the interior cell owner is \emph{also} represented with $\owner{b}$.\footnote{After the cell aggregation and the \ac{vef} owner definition, we have defined a map $\owner{\cdot}$ such that, given any outer \ac{vef} or cut cell, provides its owner (interior) cell.} 


\begin{figure}[ht!]
  \centering
    \begin{subfigure}{0.33\textwidth}
    \centering    
    \includegraphics[width=0.8\textwidth]{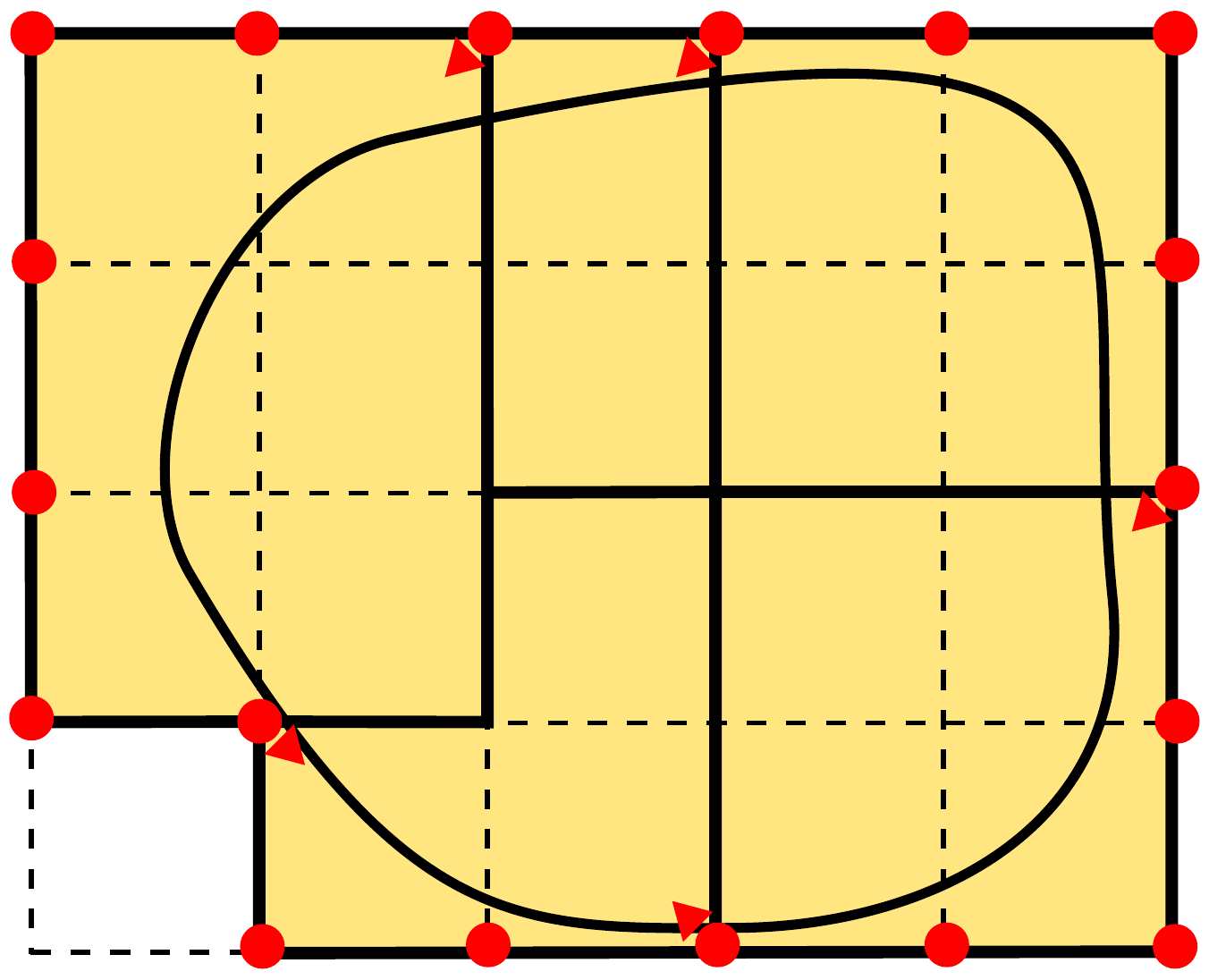}
    \caption{Outer vertex to aggregate map}
    \label{fig:def-outnodmap-a}
  \end{subfigure}
  \begin{subfigure}{0.33\textwidth}
    \centering
    \includegraphics[width=0.8\textwidth]{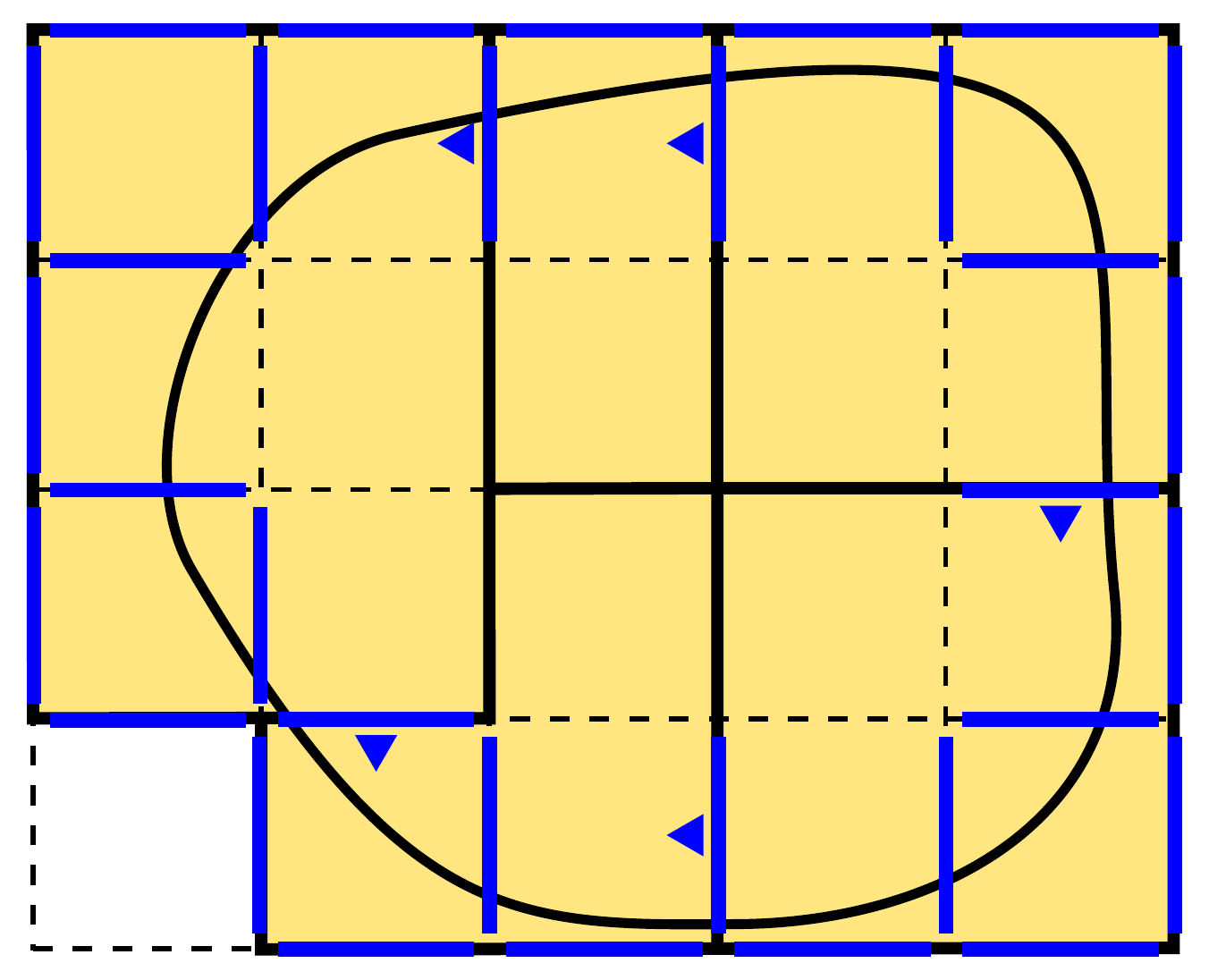}
    \caption{Outer face to aggregate map}
    \label{fig:def-outnodmap-b}
  \end{subfigure}
  \begin{subfigure}{0.18\textwidth}
  \begin{tabular}{cl}
    \includegraphics[width=3.2em]{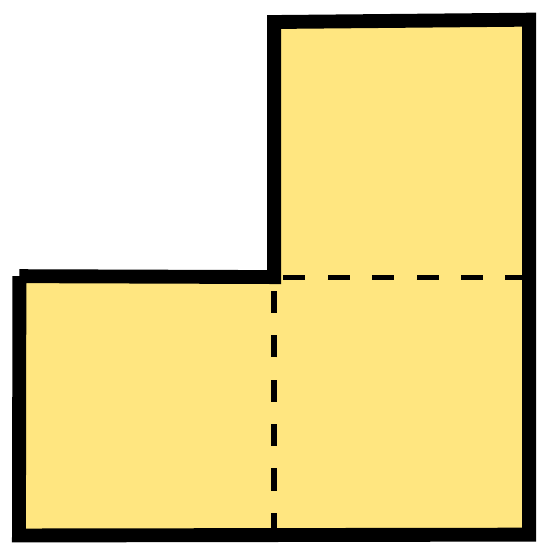} & Aggregate\\
    \includegraphics[width=1.6em]{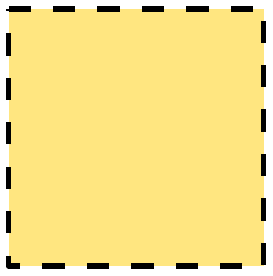} & Cell\\
        \tikz{ \draw[blue,line width=2pt] (0,0) -- (1.4em,0);} & Outer face\\
    \tikz{ \fill[red] (0,0) circle (0.2em);} & Outer vertex
  \end{tabular}
  \end{subfigure}
  \caption{Map from outer faces and vertex to aggregate owner.  The small pointers denote the aggregate owner. 
   Pointers are not used for vertex and faces belonging to only one aggregate  since the owner is obvious. { Aggregates in ({\sc a}) and ({\sc b}) are the same, but the aggregate bounds are clearer in ({\sc a}.}}
     \label{fig:def-outnodmap}
\end{figure}

\subsection{Aggregated finite element spaces} \label{sec:agg_unf}

Our goal is to define \ac{fe} spaces using the cell aggregates introduced above, in order to end up with unfitted \ac{fe} spaces on the domain $\Dom$, with optimal approximability properties not affected by the small cut cell problem. In this work, the spaces will eventually be used for the interpolation of every velocity component and the pressure in the Stokes problem. Thus, it is enough to define the  \ac{agfe} spaces for a generic scalar-valued field. 

Let us represent with $\fespst(\omega)$ a generic global and continuous Lagrangian \ac{fe} space, i.e., it can be $\Qqh$ for hex meshes and $\Pqh$ for tet meshes, for an arbitrary order $q$. We  introduce the \emph{active} \ac{fe} space associated with the active portion of the background mesh $\fespex \doteq \fespst(\meshex)$ and the \emph{interior} \ac{fe} space $\fespin \doteq \fespst(\meshin)$. 
The active \ac{fe} space $\fespex$ (see Fig. \ref{fig:def-spaces-c})
is the functional space typically used in unfitted \ac{fe} methods
(see, e.g.,
\cite{badia_robust_2017,de_prenter_condition_2017,schillinger_finite_2014}).
It is well known that $\fespex$ leads to arbitrary ill conditioned
systems when integrating the \ac{fe} weak form on the physical domain
$\Dom$ only (if no stabilization technique is used to remedy it).  It is
obvious that the interior \ac{fe} space $\fespin$ (see
Fig. \ref{fig:def-spaces-a}) is not affected by this problem, but it
is not usable since it is not defined on $\Dom$.

Herein, we propose an alternative \ac{agfe} space
$\fespag$ that is defined on $\Domap$ but does not present the
ill-conditioning issues related to $\fespex$. To this end, we can
define the set of nodes of $\fespin$ and $\fespex$ as $\nodesgin$ and
$\nodesgex$, respectively (see Fig. \ref{fig:def-spaces}). We define
the set of \emph{outer} nodes as
$\nodesgou \doteq \nodesgex \setminus \nodesgin$ (e.g., the nodes that belong to outer \acp{vef} in Fig.
\ref{fig:def-outnodmap}). The outer
nodes are the ones that can lead to conditioning problems due to the
small cut cell problem (see, e.g.,
\cite{de_prenter_condition_2017}). The space of global shape functions
of $\fespin$ and $\fespex$ can be represented as
$\{ \shpf{b} \, : \, b \in \nodesgin\}$ and
$\{ \shpf{b} \, : \, b \in \nodesgex\}$, respectively. Any function
$\ush \in \fespin$ can be written as
$\ush = \sum_{a \in \nodesgin} u^a_h \shpf{a}$; analogously for
functions in $\fespex$. The space $\fespag$ is defined taking as
starting point $\fespex$, and adding judiciously defined constraints
for the nodes in $\nodesgou$.

\begin{figure}[ht!]
  \centering
  \begin{subfigure}{0.24\textwidth}
    \centering
    \includegraphics[width=0.9\textwidth]{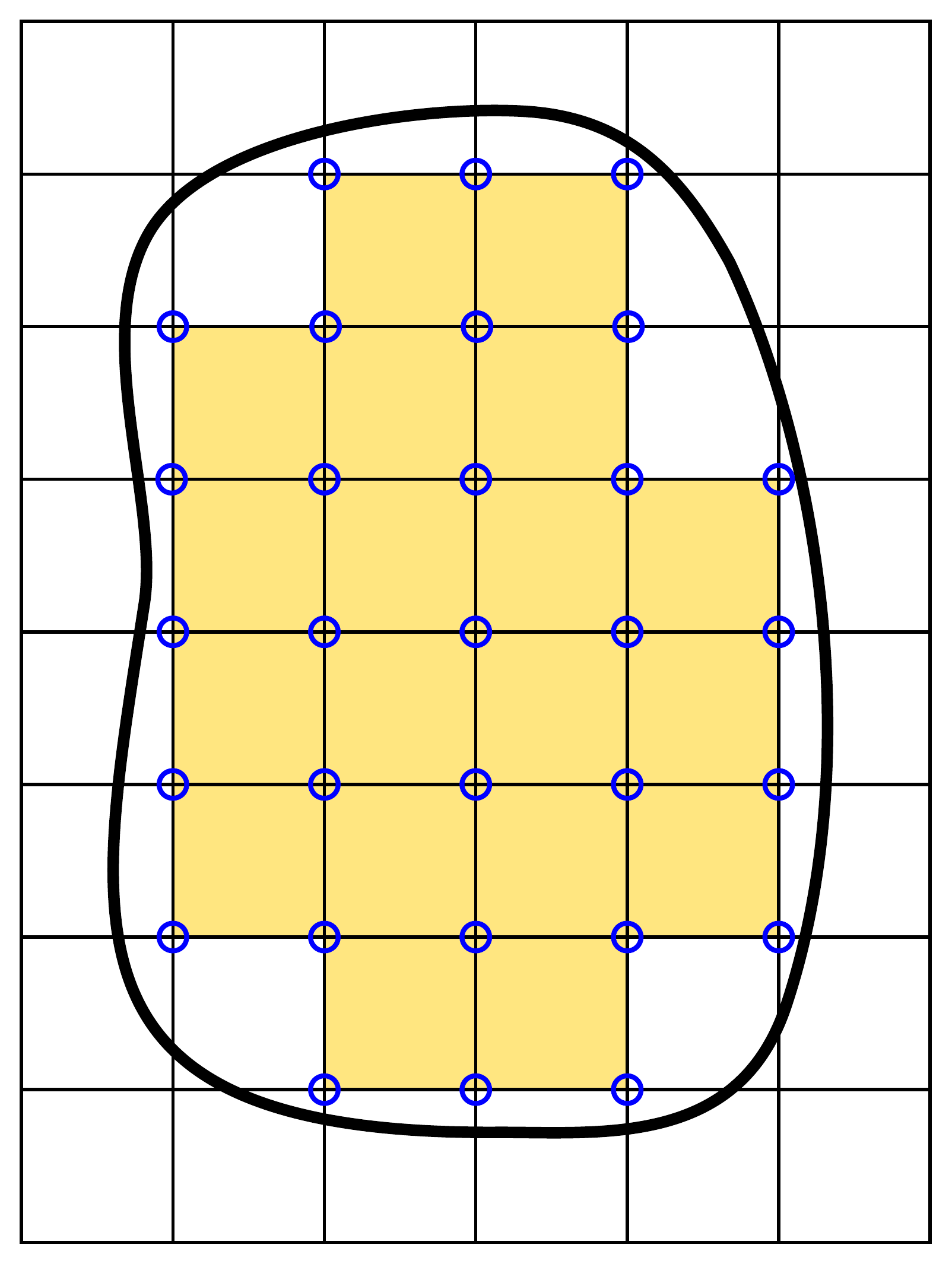}
    \caption{$\fespin$}
    \label{fig:def-spaces-a}
  \end{subfigure}
  \begin{subfigure}{0.24\textwidth}
    \centering
    \includegraphics[width=0.9\textwidth]{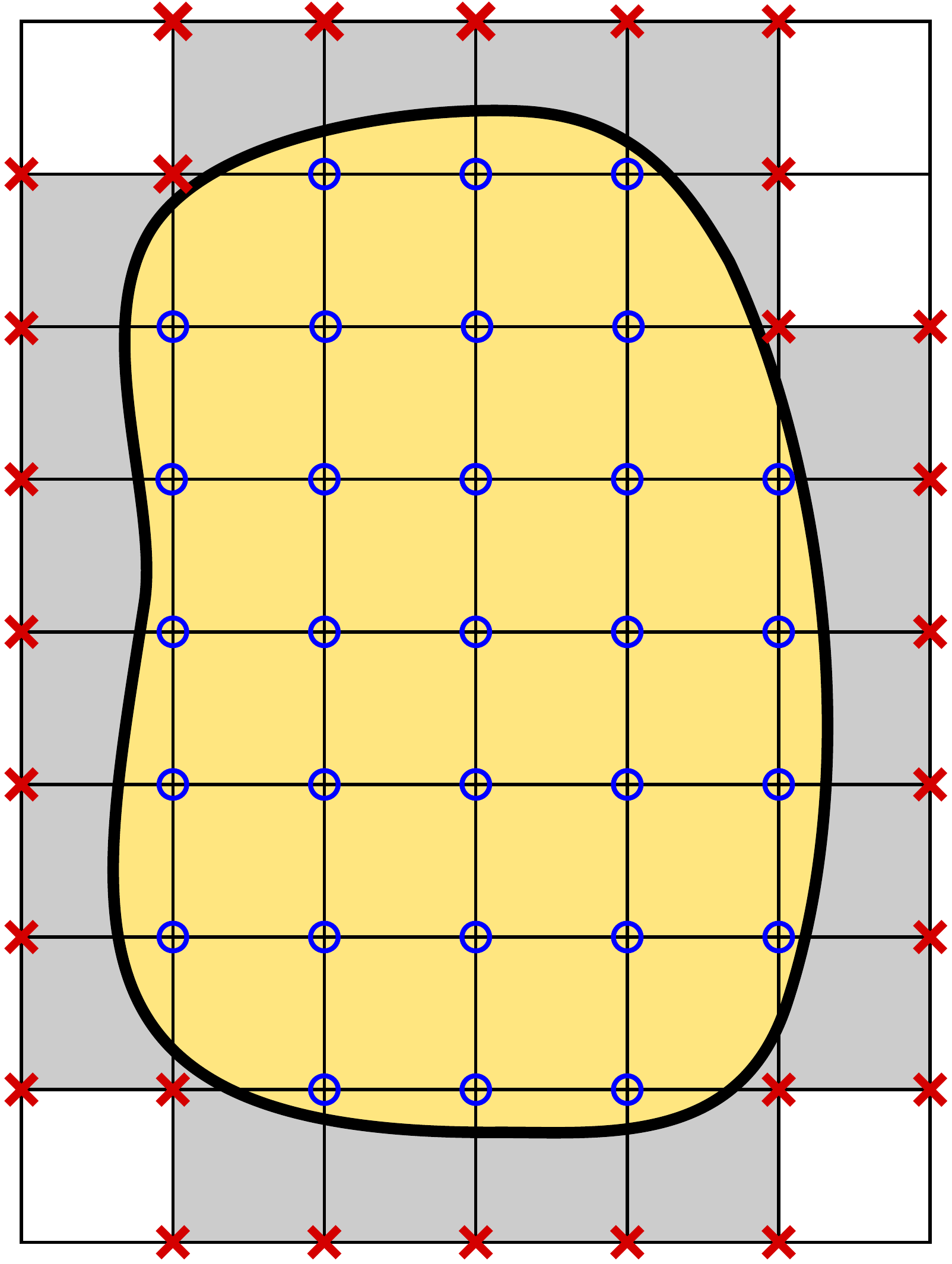}
    \caption{$\fespag$}
    \label{fig:def-spaces-b}
  \end{subfigure}
  \begin{subfigure}{0.24\textwidth}
    \centering
    \includegraphics[width=0.9\textwidth]{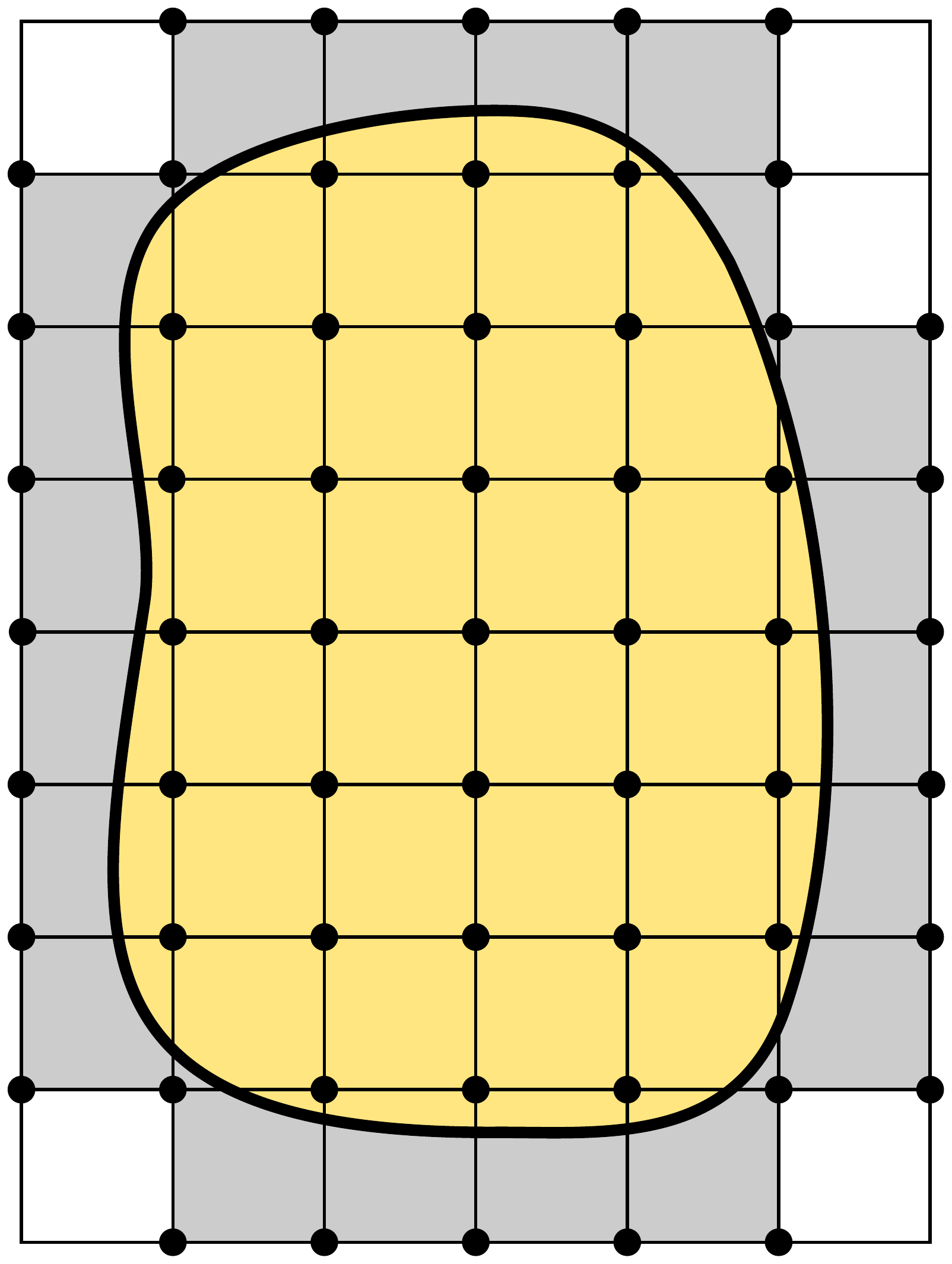}
    \caption{$\fespex$}
    \label{fig:def-spaces-c}
  \end{subfigure}
  \begin{subfigure}{0.2\textwidth}
    \begin{tabular}{l}
      {\color{blue} $\circ$} nodes in $\nodesgin$\quad\quad
      \\      
      {\color{black} $\bullet$} nodes in $\nodesgex$
      \\      
      {\color{red} $\times$} nodes in $\nodesgou$\quad\quad
    \end{tabular}
  \end{subfigure}
  \caption{Finite Element spaces.}
  \label{fig:def-spaces}
\end{figure}

In order to define $\fespag$, we observe that, in nodal Lagrangian \ac{fe} spaces, there is a one-to-one map between \acp{dof} and nodes (points) of the \ac{fe} mesh. For globally continuous \ac{fe} spaces, we can define the owner {\ac{vef}} of a node as the lowest-dimensional {\ac{vef}} that contains the node.  As a result, the geometrical outer-\ac{vef}-to-cell-owner map above leads to an outer-\ac{dof}-to-cell-owner map too. Making abuse of notation, we also define the \ac{dof} map as $\owner{b}$ for an outer \ac{dof} $b$.

Given a function $\vsh \in \fespin$ and a cell $\cell \in \meshin$, we define the unique polynomial $\xi_h^{\cell}(v_h): \mathbb{R}^d \rightarrow \mathbb{R}$ such that its restriction  to the cell $\cell$ coincides with the \ac{fe} function, i.e., $\vsh(\x) = \xi_h^{\cell}(v_h)(\x)$, $\x \in \cell$. With these ingredients, we define $\fespag \subset \fespex$ as the subset of functions in $\fespex$ such that, for any \ac{dof} $a \in \nodesgou$,
\begin{align}
\vsh^a = \sigma^a( \xi_h^{\owner{a}}(\vsh)) =\sum_{b \in \nodes{\owner{a}}} \sigma^a (\xi_h^{\owner{a}}(\shpf{b})) \sigma^b(\vsh).
  \label{eq:con_abs}
\end{align}
By construction, functions in $\fespag$ are uniquely determined by the \acp{dof} of $\fespin$. Thus, we can define the extension operator $\mathcal{E}: \fespin \rightarrow \fespag \subset \fespex$, such that, given $\ush \in \fespin$ provides the \ac{fe} function $\ext{\ush} \in \fespex$ with outer nodal values computed as in \eqref{eq:con_abs}. Thus, the \ac{agfe} space is the range of this operator, i.e., $\fespag \doteq \ext{\fespin} \subset \fespex$. 
 Since $\fespag \subset \fespex$, if $\fespin$ and $\fespex$ are $\mathcal{C}^0$ continuous, so it is $\fespag$. We note that \eqref{eq:con_abs} has sense for continuous and discontinuous spaces, and both tensor-product and serendipity spaces for hex meshes.

\subsubsection{Nodal Lagrangian aggregated finite element spaces}\label{sec:nod_lag_agg_fe_spa}
 
In particular, for nodal-based Lagrangian \ac{fe} spaces (which include tensor-product $\Qqh(\mesh)$  and serendipity spaces $\Qqhm(\mesh)$ for hex meshes and $\Pqh(\mesh)$ for tet meshes), the previous expression is reduced to:
\begin{equation}
  \vsh(\x^a) = \sum_{b \in \mathcal{N}(\owner{a})} \vsh(\x^b) \phi^b(\x^a).
  \label{eq:con_lag}
\end{equation}
The computation of the constraint is straightforward, and simply involves to evaluate the shape function polynomials of a cell in a set of points that do not belong to the cell, viz., the nodes of an aggregated cut cell. The definition of \ac{dof} ownership is simple, the \ac{vef} or cell that contains the node related to the \ac{dof} with minimum dimension, which is uniquely defined.

\subsubsection{Discontinuous aggregated finite element spaces} \label{sec:dis_agg_fe_spa}
Let us comment on discontinuous \ac{fe} spaces, e.g., $\Pqhd(\mesh)$ on hex meshes. In this case, all the \acp{dof} belong to the cell itself, since no continuity must be enforced. Thus, the \acp{dof} owner and \acp{dof}-to-cell maps are trivial once defined the cell aggregation. Since no continuity must be enforced among cells, the \acp{dof} definition is very flexible. The definition in \eqref{eq:con_abs} is general and can be used for discontinuous spaces with \acp{dof} that are not nodal evaluations. It is easy to check that for discontinuous spaces, the \ac{agfe} space can be analogously defined as:
\begin{align}
\fespag = \{ v  : \, v|_A \in
\Pq^-(A), \, \hbox{for any} \ A \in \meshag \}.\label{eq:dg_con}
\end{align}
The equivalence between the definition based on  \eqref{eq:con_abs} and the one in \eqref{eq:dg_con} is straightforward. The use of aggregation techniques within \ac{dg} methods has already been used, e.g., in \cite{Johansson2013,kummer_extended_2013}.
  
\subsubsection{Aggregated finite elements with serendipity extension}\label{ssec:ser_ext}

Up to now, we have assumed that the constraints for the extension operator were computed using the same shape functions as the ones of the local \ac{fe} space in the owner interior cell (see \eqref{eq:con_abs}). Here, we consider a more general case in which these two shape functions bases (and the corresponding spanned spaces) can differ. In particular, we are interested in using a tensor-product Lagrangian space at all cells in a hex mesh, but to compute the constraints through the corresponding serendipity basis (preserving the order of approximation). As we will see later on, it does not affect accuracy and has positive properties when considering stable mixed \ac{agfe} spaces.

Let us introduce some notation, in order to distinguish between tensor-product and serendipity \ac{fe} spaces. For serendipity \acp{fe} and hex meshes, i.e., $\Qqm(\cell)$, we represent its unisolvent set of nodes with $\nodesm{\cell}$, i.e., the corresponding nodal values are a basis for the dual space, with cardinality $\breve{n}_{\moments}$ (see \cite[Fig. 1]{Arnold2011}). The corresponding shape functions and \acp{dof} are represented with $\{ \breve{\phi}^a \}_{a \in \nodesm{\cell}}$ and $\{ \breve{\sigma}^a \}_{a \in \nodesm{\cell}}$, respectively. For serendipity spaces, we denote its corresponding nodal interpolator in \eqref{eq:loc_int} as $\breve{\pi}^I_K(v)$.

We constrain every outer \ac{dof} $a \in \nodesgou$ of a function $\vsh \in \fespex$ as
\begin{equation}
\vsh^a = \sigma^a( \xi_h^{\owner{a}}\circ \breve{\pi}^I_{\owner{a}}(\vsh))) =\sum_{b \in \nodesm{\owner{a}}} \sigma^a (\xi_h^{\owner{a}}(\breve{\phi}^{b})) \breve{\sigma}^b(\vsh),
\label{eq:con_ans_ser}
\end{equation}
or analogously,
\begin{equation}
\vsh(\x^a) = \sum_{b \in \nodesm{\owner{a}}}  \vsh(\x^b) \breve{\phi}^{b}(\x^a).
\label{eq:con_ser}
\end{equation}
It leads to the new \ac{agfe} space $\fespagm$ and its corresponding extension operator $\breve{\mathcal{E}}_h$.
\subsection{Mathematical properties}

In the following, we list some \ac{fe} inequalities that will be used in the next sections. We use $A \lesssim B$ to say that $A < C B$ for some positive constant $C$; analogously for $\gtrsim$ and $\eqsim$. We use $C$ to denote such a constant, which can be different in different appearances. The word \emph{constant} in this work always denotes independence with respect to $h$ and the cut cell intersection, i.e., it is not affected by the small cut cell problem.

Let us consider an arbitrary \ac{fe} space $\fespst$. The following inverse inequalities hold (see, e.g., \cite{brenner_mathematical_2010}):
\begin{align}
  \normregl2{\grad \ush}{\cell} & \lesssim \hc^{-1} \normregl2{ \ush}{\cell}, \label{eq:inv_ine1} \\
  \normregl2{\dn \ush}{\bou \cap \cell} & \lesssim \hc^{-\frac{1}{2}} \normregl2{\grad \ush}{{\cell}}, \label{eq:inv_ine2}
\end{align}
where 
$\n$ is the outward normal (in this appearance, with respect to $\bou_{\rm D} \cap \cell$),
and $\dn \doteq \n \cdot \grad$. Furthermore, we have the following trace inequalities  (see \cite{hansbo_unfitted_2002}):
\begin{align}
  \normregl2{\ush}{\bcell} & \lesssim \hc^{-\half} \normregl2{\ush}{\cell} + \hc^\half \normregl2{\grad \ush}{\cell}.\label{eq:tra_ine}\\
  \normregl2{\ush}{\bou \cap \cell} & \lesssim \hc^{-\half} \normregl2{\ush}{\cell} + \hc^\half \normregl2{\grad \ush}{\cell}, \label{eq:tra_ine2}
\end{align}
(We note that \eqref{eq:tra_ine2} implies \eqref{eq:tra_ine}). The extension operators $\ext{\cdot}$ and $\extm{\cdot}$ satisfy the following stability bounds. The standard extension operator can be considered for both tet and hex meshes, whereas the serendipity extension operator only for hex meshes. 
\begin{lemma}\label{lm:sta_agg_ext}
 Given a function $\ush \in \fespin$, it holds:
\begin{align}
  & \normregl2{\ext{\ush}}{\Domex} \lesssim  \normregl2{\ush}{\Domin}, \quad && \normregl2{{\extm{\ush}}}{\Domex}  \lesssim  \normregl2{\ush}{\Domin}, \\
  & \normregl2{\grad \ext{\ush}}{\Domex}  \lesssim  \normregl2{\grad \ush}{\Domin}, \quad &&
\normregl2{\grad \extm{\ush}}{\Domex}  \lesssim  \normregl2{\grad \ush}{\Domin}.
\end{align}
\end{lemma}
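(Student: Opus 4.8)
The plan is to reduce the global estimates to cell-local estimates on each aggregate, and then transport those to a reference configuration where everything becomes a finite-dimensional equivalence of norms with constants depending only on the polynomial order and the aggregate shape. First I would fix an aggregate $A \in \meshag$ with owner cell $\cell_0 = \owner{A}$ and observe that, by the definition of the extension operator via \eqref{eq:con_abs} (resp.\ \eqref{eq:con_ans_ser} for the serendipity variant), the restriction $\ext{\ush}|_A$ is \emph{entirely determined} by $\ush|_{\cell_0}$: indeed $\ext{\ush}|_{\cell_0} = \ush|_{\cell_0}$, and on each aggregated cut cell $\cell_i \subset A$ the function $\ext{\ush}$ is the polynomial whose outer \acp{dof} are prescribed by evaluating $\xi_h^{\cell_0}(\ush)$ (the unique polynomial extension of $\ush|_{\cell_0}$) at the nodes of $\cell_i$, while the remaining (non-outer) \acp{dof} are shared with $\cell_0$ or with previously treated cells. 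The key structural point — which I would state as a small sublemma — is that on the whole aggregate $\ext{\ush}|_A$ coincides with (the relevant projection of) the single polynomial $\xi_h^{\cell_0}(\ush)$; for the plain extension this is immediate since interpolation of a polynomial of the right degree reproduces it, and for the serendipity extension $\ext{\ush}|_A = \xi_h^{\cell_0}(\breve\pi^I_{\cell_0}(\ush))$, the image of a single serendipity polynomial.

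Next I would set up the scaling argument. Because the background mesh is quasi-uniform with size $h$ and, by the aggregate-size bound recalled after Algorithm~\ref{alg:agg_sch}, every aggregate has diameter $\lesssim \gamma h$ with $\gamma$ independent of $h$ and of the cut position, I map $A$ (together with its owner cell $\cell_0$ and the surrounding box of side $\sim\gamma h$) to a reference configuration of unit size by an affine dilation $x \mapsto x/h$. Under this map $L^2$ norms scale by $h^{d/2}$ and $\grad$ by $h^{-1}$, uniformly, so it suffices to prove the reference estimates $\normregl2{\hat P}{\hat A} \lesssim \normregl2{\hat P}{\hat\cell_0}$ and $\normregl2{\grad\hat P}{\hat A} \lesssim \normregl2{\grad\hat P}{\hat\cell_0}$ for $\hat P$ ranging over the (finite-dimensional) polynomial space in question, with $\hat A \subset$ a fixed-size box and $\hat\cell_0$ a unit cube. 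The first inequality is just the statement that on the finite-dimensional space of polynomials of degree $\le q$ (in the appropriate sense), the two quantities $P \mapsto \normregl2{P}{\hat A}$ and $P \mapsto \normregl2{P}{\hat\cell_0}$ are norms — the latter because a polynomial of degree $\le q$ vanishing on a cube is zero — hence equivalent; the constant depends only on $q$, $d$, and the geometry class of $\hat A$. The gradient estimate follows by combining the already-stated inverse inequality \eqref{eq:inv_ine1} on $\hat A$ (to pass from $\grad\hat P$ to $\hat P$ on $\hat A$) with the $L^2$ estimate just proved and the inverse inequality on $\hat\cell_0$ in the other direction; alternatively one argues directly that $P \mapsto \normregl2{\grad P}{\hat\cell_0}$ is a norm on polynomials modulo constants and uses that $\ext{\cdot}$ preserves constants.

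The one genuine subtlety — and the step I expect to be the main obstacle — is making the reference estimate \emph{uniform over all aggregate geometries}. The set of possible aggregates is infinite (different numbers and arrangements of cut cells, arbitrary cut positions), so one cannot simply quote compactness for a single fixed domain. I would handle this by noting that what actually matters for the bound is not $\hat A$ itself but the fixed bounding box $\hat\boxc$ of side comparable to $\gamma$ that contains $\hat A$ and $\hat\cell_0$: one has $\normregl2{\hat P}{\hat A} \le \normregl2{\hat P}{\hat\boxc}$, and $\normregl2{\hat P}{\hat\boxc} \lesssim \normregl2{\hat P}{\hat\cell_0}$ by equivalence of norms on the finite-dimensional polynomial space over the \emph{fixed} pair $(\hat\boxc,\hat\cell_0)$ — here compactness/finite dimensionality applies cleanly because the box and the owner cube are fixed once $\gamma$ is fixed. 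Summing the squared local bounds over all $A \in \meshag$, using that the aggregates partition $\Domex$ (so $\sum_A \normregl2{\cdot}{A}^2 = \normregl2{\cdot}{\Domex}^2$) and that the owner cells are disjoint interior cells (so $\sum_A \normregl2{\cdot}{\cell_0}^2 \le \normregl2{\cdot}{\Domin}^2$), yields the four global inequalities. The serendipity versions are identical once one records that $\breve\pi^I_{\cell_0}$ is a bounded projection on the reference cube, which again is a fixed finite-dimensional statement; this is exactly where the affine-equivalence footnote and the restriction to Cartesian hex meshes are used.
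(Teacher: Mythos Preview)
Your overall strategy --- local estimate via finite-dimensional norm equivalence on a fixed-size reference configuration, then sum --- is the right one and is essentially what the cited argument in \cite{Badia2017} does. However, your ``small sublemma'' is false for the continuous \ac{agfe} space: the restriction $\ext{\ush}|_A$ is \emph{not} determined by $\ush|_{\cell_0}$ alone, and in particular it does \emph{not} coincide with the single polynomial $\xi_h^{\cell_0}(\ush)$ on the cut cells of $A$. The reason is in the definition of the constraint \eqref{eq:con_abs}: an outer \ac{dof} $a$ on a cut cell $\cell_i \subset A$ is constrained via $\owner{a}$, the interior-cell owner of the \emph{\ac{vef}} carrying $a$, and that \ac{vef} may be shared with a cut cell belonging to a neighbouring aggregate $B$, in which case the \ac{vef}-to-owner map (Fig.~\ref{fig:def-outnodmap}) may well return $\owner{B} \neq \cell_0$. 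Similarly, an interior \ac{dof} on $\cell_i$ (one in $\nodesgin$) may be shared with an interior cell different from $\cell_0$. The paper itself records this dependence in the proof of Theorem~\ref{th:app_sco_zha}: the nodal values of the extension on $A$ are constrained from the owner of $A$ \emph{or the owner of a neighbour of $A$}. Your proposed identity would also break inter-aggregate $\mathcal{C}^0$ continuity, which is precisely what the \ac{vef}-owner construction is there to enforce.

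The repair keeps your scaling/bounding-box argument but changes the locality unit. Work cell-by-cell: for each cut cell $\cell$, the \acp{dof} of $\ext{\ush}|_\cell$ are controlled by the \acp{dof} of a bounded collection of interior cells $\{\owner{a}:a\in\nodes{\cell}\}\cup\{\text{interior cells sharing a \ac{vef} with }\cell\}$, all lying within distance $\lesssim\gamma h$ of $\cell$ by the aggregate-size bound. Your bounding-box norm equivalence then gives $\normregl2{\ext{\ush}}{\cell}^2 \lesssim \sum_{K'}\normregl2{\ush}{K'}^2$ over that collection, with constants depending only on $q,d,\gamma$. The missing global ingredient is a \emph{bounded-overlap} argument when you sum: each interior cell can appear as $\owner{a}$ or as a neighbour for only boundedly many cut cells (again from the $\gamma h$ bound and quasi-uniformity), which yields the $\Domin$ right-hand side. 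For the gradient estimate your ``extension preserves constants'' idea is correct, but you must apply it to a constant on the whole local patch of contributing interior cells, not just on $\cell_0$; equivalently, subtract a mean over one owner and combine a discrete Poincar\'e inequality on the patch with the $L^2$ estimate and an inverse inequality.
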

\begin{proof}
The proof for $\ext{\cdot}$ can be found in \cite[Corollary 5.3]{Badia2017} for a general \ac{agfe} space, that can be either $\fespin$ or the discontinuous \ac{fe} space of its gradients. The results for $\extm{\cdot}$ can be proved analogously.
\end{proof}

Given the interior \ac{fe} space $\fespin$, we can define the standard Scott-Zhang interpolation using the standard definition in \cite{scott_finite_1990}. Let us define an \emph{extended} Scott-Zhang interpolant as follows: 1) perform the standard interior Scott-Zhang interpolator onto $\fespin$ through the assignment for every interior \ac{dof} $a \in \nodesgin$ of an arbitrary \ac{vef}/cell\footnote{Even though this choice is arbitrary, we do not permit $\tilde{K}_a \subset \Domin$ to be a vertex, since it would restrict the applicability of the interpolator to $\mathcal{C}^0(\Dom)$ functions with pointwise sense. We note that the concept of \ac{vef}/cell ownership of a \ac{dof} can be extended to non-nodal \acp{dof} (see, e.g., \cite{badia_fempar:_2017}).} $\tilde K_a \subset \Domin$ that contains the owner \ac{vef} of $a$,  and compute the mean value of the function on $\tilde{K}_a$, represented with $\sigma^{SZ,a}_{\tilde{K}_a}(\cdot)$; 2) extend the interior function to $\Omega$ using the extension operator $\ext{\cdot}$ (or $\extm{\cdot}$), leading to a function in $\fespag$ (or $\fespagm$). Thus, the extended Scott-Zhang interpolant reads:
  $$ \pi_h^{SZ}(u)(\x) \doteq \sum_{a \in \nodesgin} \sigma^{SZ,a}_{\tilde{K}_a} (u) \mathcal{E}(\phi^a(\x)).$$
The serendipity-extended interpolant, represented with $\breve{\pi}_h^{SZ}(u)$, is obtained as above, but using $\extm{\cdot}$ instead.
\def\nodalval#1{{\underline{\boldsymbol{\sigma}}(#1)}}

In the next theorem, we prove the approximability properties of the extended Scott-Zhang interpolant. In the statement of the theorem, we represent with $\omega(A)$ the union of the owner of the aggregate itself and the owners of all its neighbors, i.e., $\omega(A) \doteq \{  \mathcal{O}(B) : A \cap B \neq \emptyset, \ B \in \meshag \}$. We note that $A \not\subseteq \omega(A) \subset \Domin$ in general.

\begin{theorem}\label{th:app_sco_zha}
  Let us consider an \ac{agfe} space $\fespag$ such that $\Pq(A) \subset \fespag(A)$ for $A \in \meshag$, $q \in \mathbb{N}^+$. Let us consider a function $u \in W_p^m(\Omega)$, where 
  $1 \leq p \leq \infty$, $m \leq q + 1$, and $m \geq d$ for $p=1$ or $m > \frac{d}{p}$ for $p>1$. It holds:
  \begin{align}
    & \| u - \pi_h^{SZ}(u) \|_{W_p^s(A)} \lesssim \h^{m-s} | u |_{W_p^m(\omega(A))}, \qquad
    & \| u - \pi_h^{SZ}(u) \|_{W_p^s(S)} \lesssim \h^{m-s-\frac{1}{2}} | u |_{W_p^m(\omega(A))}, \label{eq:sco_zha_int}
\end{align}  
for $1 \leq s \leq m$, $A \in \meshag$, and $S$ being the intersection between a plane in $\mathbb{R}^d$ and $A$. The same results apply for the serendipity-extended \ac{agfe} space $\fespagm$ and its corresponding interpolant $\breve{\pi}_h^{SZ}(\cdot)$.
\end{theorem}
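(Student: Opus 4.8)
The plan is to adapt the classical Scott--Zhang argument --- polynomial invariance, local stability of the interpolant, and a Bramble--Hilbert (Dupont--Scott) estimate --- but with the stability routed through the aggregate extension operator. The structural fact to exploit is the factorization $\pi_h^{SZ} = \ext{\cdot}\circ\pi_h^{\mathrm{int}}$, where $\pi_h^{\mathrm{int}}$ is the usual interior Scott--Zhang operator onto $\fespin$; recall that $\pi_h^{\mathrm{int}}$ is a projection ($\pi_h^{\mathrm{int}}(v_h)=v_h$ for $v_h\in\fespin$) and obeys the standard local $W_p^s$-stability and approximation estimates on $\meshin$. The serendipity-extended interpolant factors analogously as $\extm{\cdot}\circ\pi_h^{\mathrm{int}}$.

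\emph{Step 1 (polynomial invariance).} I would show that $\pi_h^{SZ}$ reproduces $\Pq$ on $\Domex$, hence on each $A\in\meshag$. Given $P\in\Pq$, its restriction to $\Domin$ lies in $\fespin$ (cellwise $\Pq\subset\Qq$ and $P$ is globally $\mathcal{C}^0$), so $\pi_h^{\mathrm{int}}(P)=P|_{\Domin}$; moreover $\xi_h^K(P|_{\Domin})=P$ for every interior cell $K$, so the constraints \eqref{eq:con_abs} assign to each outer \ac{dof} exactly its value for $P$, whence $\ext{P|_{\Domin}}=P$ on $\Domex$ and $\pi_h^{SZ}(P)=P|_{\Domex}$ (consistently with the hypothesis $\Pq(A)\subset\fespag(A)$). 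For $\fespagm$ the argument is the same, using $\Pq\subset\Qqm$ and that $\breve{\pi}^I_K$ reproduces $\Qqm$, so that \eqref{eq:con_ans_ser} still reproduces $P$.

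\emph{Step 2 (aggregate-wise stability of the extension).} I would prove that for $v_h\in\fespin$ and $A\in\meshag$,
\[
 \|\ext{v_h}\|_{W_p^s(A)} \lesssim \h^{-s}\|v_h\|_{L^p(\omega(A))},\qquad
 \|\ext{v_h}\|_{W_p^s(S)} \lesssim \h^{-s-\frac12}\|v_h\|_{L^p(\omega(A))},
\]
with $S$ a plane section of $A$, and the same for $\extm{\cdot}$; this is the $L^p$, localized counterpart of Lemma~\ref{lm:sta_agg_ext}. The proof uses that $\ext{v_h}$ is cellwise a polynomial of degree $\le q$, that an aggregate contains a bounded number of cells (its diameter is $\lesssim\gamma\h$ and the mesh is quasi-uniform), and the inverse and trace inequalities \eqref{eq:inv_ine1}--\eqref{eq:tra_ine2} applied cell by cell, reducing everything to $\|\ext{v_h}\|_{L^p(A)}$; each nodal value of $\ext{v_h}$ on a cell of $A$ is then either a value of $v_h$ on an interior cell of $\omega(A)$, or an extrapolation $\xi_h^{\owner{a}}(v_h)(\x^a)$ of a degree-$q$ polynomial over a distance $\lesssim\gamma\h$ from its owner interior cell $\owner{a}\in\omega(A)$, which by equivalence of norms on the finite-dimensional polynomial space is $\lesssim C(\gamma,q)\,\h^{-d/p}\|v_h\|_{L^p(\owner{a})}$.

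\emph{Step 3 (assembly).} Fix $A$ and let $P\in\Pq$ be an averaged Taylor polynomial of $u$ of degree $m-1\le q$ over a ball of radius $\eqsim\h$ contained in $\mathcal{O}(A)$ (well defined thanks to the hypotheses on $m$ and $p$, which also make the Scott--Zhang functionals meaningful). By Step 1 and linearity, $u-\pi_h^{SZ}(u)=(u-P)-\ext{\pi_h^{\mathrm{int}}(u-P)}$ on $A$, so
\[
 \|u-\pi_h^{SZ}(u)\|_{W_p^s(A)} \le \|u-P\|_{W_p^s(A)} + \|\ext{\pi_h^{\mathrm{int}}(u-P)}\|_{W_p^s(A)} .
\]
By Step 2 and the $L^p$-stability of $\pi_h^{\mathrm{int}}$, the second term is $\lesssim\h^{-s}\|u-P\|_{L^p(\omega(A))}$, where $\omega(A)$ is understood to absorb the bounded layer of interior cells sampled by the Scott--Zhang functionals of the owner cells. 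A Bramble--Hilbert / Dupont--Scott estimate for $u-P$ over the union of the physical parts of the cells forming $A$ together with $\omega(A)$ --- which, by the $\gamma\h$ bound on the aggregate size and quasi-uniformity, satisfies a uniform chain (John) condition relative to the chosen ball --- then bounds both terms by $\h^{m-s}|u|_{W_p^m(\omega(A))}$, which is the first inequality in \eqref{eq:sco_zha_int}. The second inequality follows verbatim, replacing the $W_p^s(A)$ bounds of Step 2 by their plane-section counterparts and using a trace-type Bramble--Hilbert estimate, which accounts for the extra $\h^{-1/2}$; the serendipity case is identical since Lemma~\ref{lm:sta_agg_ext} and Step 2 also hold for $\extm{\cdot}$. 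I expect the main obstacle to be precisely the aggregate-wise $W_p^s$ stability of the composed operator --- combining inverse estimates on aggregates of bounded but non-unit size, the $L^p$ extension stability with its polynomial-extrapolation constant, and standard Scott--Zhang stability --- together with the geometric bookkeeping that keeps the approximation error localized to $\omega(A)$ rather than to a larger neighbourhood of $A$.
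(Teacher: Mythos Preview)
Your proposal is correct and follows essentially the same route as the paper: polynomial invariance of $\pi_h^{SZ}$, a local stability bound for the composed operator $\ext{\cdot}\circ\pi_h^{\mathrm{int}}$ that confines its dependence to $\omega(A)$, and a Bramble--Hilbert/Deny--Lions step. The only cosmetic differences are that the paper proves stability directly in the $W_p^m(A)\to W_p^m(\omega(A))$ norm and then obtains the $h$-powers by Sobolev embedding, trace, and scaling at the end, whereas you build the $h^{-s}$ factors in from the outset via inverse inequalities and close with a Dupont--Scott Taylor polynomial instead of Deny--Lions; neither variant changes the substance of the argument.
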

\begin{proof}
  The standard and serendipity interpolants can be analyzed analogously. The Scott-Zhang moments $\sigma^{SZ,a}_{\tilde{K}_a}(\cdot)$ are bounded in $W^m_p(\Omega)$ owing to the trace theorem, i.e., $W_p^l(\Omega) \subset L^1(\tilde K_a)$ for $\tilde K_a$ being a facet or cell (see \cite{scott_finite_1990}). On the other hand, $\mathcal{E}(\phi^a(\x)) \in W_\infty^m(\Omega) \subset W_p^m(\Omega)$, since it is a combination of shape function with bounded nodal values (see \eqref{eq:con_abs} and Lem. \ref{lm:sta_agg_ext}). Moreover, from the definition of the extension operator, the nodal values of $\pi_h^{SZ}(\cdot)|_A$ are constrained from the \acp{dof} of the owner interior cell of $A$ or the \acp{dof} of the owner cell of a neighbor of $A$. Thus, 
  we readily obtain that  $\| \pi_h^{SZ}(u) \|_{W_p^m(A)} \leq C \| u \|_{W_p^m(\omega(A))}$. Next, we consider an arbitrary function $\pi(u) \in W_p^m(\Dom)$ such that $\pi(u)|_\cell \in \Pq(\omega(A)) \subset \fespag(\omega(A))$ (note that the inclusion also holds for the serendipity extension). The fact that $\pi_h^{SZ}(\cdot)$ is a projection onto $\fespag$ by construction yields $\pi(u)|_A = \pi_h^{SZ}(\pi(u))|_A$. Thus, we have:
  \def\int#1{\pi_h^{SZ}(#1)}
\begin{align}
 \| u - \int{u} \|_{W_p^m(A)}  & \leq 
  \| u - \pi(u)  \|_{W_p^m(A)}  +  \| \int{\pi(u) - u} \|_{W_p^m(A)} \\
& \lesssim
  \| u - \pi(u)  \|_{W_p^m(A)}  +   \| \pi(u) - {u} \|_{W_p^m(\omega(A))} \lesssim \| u - \pi(u)  \|_{W_p^m(\omega(A))}.
\end{align}
Since $\omega(A)$ is an open bounded domain with Lipschitz boundary by definition, one can use the Deny-Lions lemma (see, e.g., \cite{ern_theory_2004}). As a result, using the $\pi(u)$ that minimizes the right-hand side, it holds:
\begin{align}
\| u - \int{u} \|_{W_p^m(A)} \lesssim | u |_{W_p^m(\omega(A))}.
\end{align}
The Sobolev embedding theorem and the trace theorem yield:
\begin{align}
& \| u - \pi_h^{SZ}(u) \|_{W_p^s(A)} \leq C(A) | u |_{W_p^m(\omega(A))}, \qquad \\
&\| u - \pi_h^{SZ}(u) \|_{W_p^{s-\frac{1}{2}}(S)} \leq \| u - \pi_h^{SZ}(u) \|_{W_p^{s}(A)} \leq C(A) | u |_{W_p^m(\omega(A))}.
\end{align}
Using standard scaling arguments, we prove the lemma. 
\end{proof}

\section{Approximation of the Stokes problem} \label{sec:app_sto}
In this section, we consider the \ac{fe} approximation of the Stokes problem \eqref{eq:sto_eqs} using \ac{agfe} spaces on unfitted meshes. We focus on extended by aggregation inf-sup stable spaces (velocity-pressure pairs of \ac{fe} spaces that satisfy a discrete version of the inf-sup condition on body-fitted meshes) with additional stabilizing terms to cure the potential deficiencies of the \emph{unfitted} inf-sup condition. In this section, the velocity and pressure spaces are represented with $\Vh$ and $\Qh$, respectively. As usual in unfitted \ac{fe} methods, the Dirichlet boundary conditions cannot be enforced strongly. Instead, we consider a Nitsche-type weak imposition of the Dirichlet data \cite{becker_mesh_2002,nitsche_uber_1971}. It provides a consistent numerical scheme with optimal converge rates (also for high-order elements) that is commonly used in the embedded boundary community (see, e.g., \cite{massing_stabilized_2014} for its application in unfitted discretizations of the Stokes problem). Another important ingredient in unfitted \ac{fe} approximations is the integration on cut cells. We refer to \cite{badia_robust_2017} for a detailed exposition of the particular technique used in this paper.
With these ingredients, we define the Stokes operator:
\begin{align}
  \Ah{\uh}{\ph}{\vh}{\qh} \doteq \ah{\uh}{\vh} + \bh{\vh}{\ph} + \bh{\uh}{\qh} - \ch{\uh}{\ph}{\vh}{\qh}, \label{eq:sto_ope}
\end{align}
where
\begin{align}
  \ah{\uh}{\vh} & \doteq  \ip{ \grad \uh}{\grad \vh} - \ipreg{\dn \uh}{\vh}{\bou} - \ipreg{\dn \vh}{\uh}{\bou} + \pen \ipreg{\h\uh}{\vh}{\bou}, \label{eq:ah_for} \\
  \bh{\vh}{\ph} & \doteq - \ip{\div \vh}{ \ph} + \ipreg{\n \cdot \vh}{\ph}{\bou}, \label{eq:bh_for}
\end{align}
with $\pen$ a large enough positive constant, for stability purposes. The right-hand side reads:
\begin{align}
\Lh{\vh}{\qh} \doteq \ip{\f}{\vh} + g_h(\f,\vh).
\end{align}
The pressure stabilization term $j_h$ and the corresponding potential modification of the right-hand side $g_h$ to keep consistency will be defined in Sect. \ref{ssec:sta_mix_fe_pre_sta}, motivated from the numerical analysis. The discrete Stokes problem finally reads: find $(\uh,\, \ph) \in \Vh \times \Qh$ such that
\begin{align}
\Ah{\uh}{\ph}{\vh}{\qh} = \Lh{\vh}{\ph}, \qquad \forall \, (\vh, \, \qh) \in \Vh \times \Qh. \label{eq:dis_sto}
\end{align}

In the following analysis, we restrict ourselves to hexahedral meshes and discontinuous pressures. Similar ideas can be applied to inf-sup stable mixed \acp{fe} on tetrahedral meshes and continuous pressures, but we do not consider these cases for the sake of conciseness. Thus, using the notation in Sect. \ref{sec:agg_unf}, we will make use of the following global \ac{agfe} spaces: the space $\Qqh$, for $q \geq 1$, in which the local \ac{fe} space is the tensor-product Lagrangian $\Qq(\cell)$  in all cells $K \in \meshact$, and the constraints are defined using the standard expression in \eqref{eq:con_lag}; the space $\Qqhm$, for $q \geq 1$,  which only differs from the previous one in the constraint definition, based now on the serendipity extension in \eqref{eq:con_ser}; the discontinuous space $\Pqhd$, for $q \geq 0$, defined in \eqref{eq:dg_con}.

\section{Numerical analysis}\label{sec:num_ana}

In this section, we perform the stability analysis of \ac{fe} methods for \eqref{eq:sto_ope}. First, in Sect. \ref{ssec:abs_sta_ana}, we consider an abstract stability analysis , i.e., we prove an inf-sup condition under some assumptions over the mixed \ac{agfe} space and the stabilization terms. Two different algorithms that satisfy these assumptions, and thus are stable, are proposed in Sect. \ref{ssec:sta_mix_fe_pre_sta}. \emph{A priori} error estimates for these methods are obtained in Sect. \ref{ssec:a_pri_err_est}. Finally, in Sect. \ref{ssec:cond_nums}, we prove condition number bounds that are independent of the cut cell intersection with the boundary, i.e., the small cut cell problem.

The analysis of the discrete problem obviously relies on the well-posedness of
the continuous problem, i.e., the inf-sup condition in \eqref{eq:inf_sup_con}.
For the sake of conciseness in notation, we have not distinguished between the
actual computational domain $\Dom_h$ and the physical domain $\Dom$. However,
it is important to distinguish between these two in the definition of the
inf-sup constant, i.e., $\beta(\Dom)$ vs. $\beta(\Dom_h)$. In general,
$\beta(\Dom_h)$ can tend to zero as $h \to 0$. The lower bound for
$\beta(\cdot)$ relies on a decomposition of the domain into a finite number of
strictly star shaped domains. $\beta(\Dom_h)$ could tend to zero as $h \to 0$
unless one can prove that this number is bounded away from zero for $\Dom_h$.
It is in fact a problem for methods that rely on inf-sup conditions for
$\Domin$ (see \cite{Guzman2017}). Even though it is hard to imagine that a reasonable smooth approximation
$\Dom_h$ of $\Dom$ would require a decomposition into a number of star shaped
domains that blows up as $h \to 0$, there are constructions of $\Dom_h$ for
which one can prove that in fact $\beta(\Dom_h)$ is bounded below, or even
more, converges to $\beta(\Dom)$. In particular, if $\Dom_h$ is a polygonal
$h$-approximation of $\Dom$ in the sense of \cite[Def. 4.5]{Bernardi2016}, it
holds $|\beta(\Dom) - \beta(\Dom_h)| \leq c(\Dom)h$. In what follows, we simply
consider $\beta \doteq \inf_{h < h_0} \beta(\Omega_h)$, for $h_0$ a fine enough
mesh size to represent the topology of the geometry at hand.

\subsection{An abstract stability analysis}\label{ssec:abs_sta_ana}

In this section, we analyze the well-posedness of the discretization of the Stokes problem \eqref{eq:sto_ope} in an abstract setting, in which the \ac{fe} spaces and stabilization terms are not explicitly stated. Instead, we do the analysis under some assumptions of these ingredients.

We define the following norms:
\begin{align}
  \tnorm{ \u }^2 \doteq \normregl2{\grad \u}{\Dom}^2 +  \normregl2{h^{-\frac{1}{2}} \u }{\bou}^2, \qquad \tnorm{ \u, \, p}^2 \doteq \tnorm{ \u}^2 + \normregl2{p}{\Dom}^2.\label{eq:mes_nor}
\end{align}
In the following lemma, we prove some stability and continuity properties of the different terms that compose the Stokes operator in \eqref{eq:sto_ope}. 

\begin{lemma}
  It holds for any $\uh, \, \vh \in \Vh$
\begin{align} \label{eq:ah_bh_sta_con}
\ah{\uh}{\uh} \geq \gamma_a \tnorm{\uh}^2, \qquad \ah{\uh}{\vh} \leq \xi_a \tnorm{\uh} \tnorm{\vh}, \qquad \bh{\vh}{\qh} \leq \xi_b \tnorm{\vh} \tnorm{\qh}.
\end{align}
for $\pen$ a large enough positive constant in \eqref{eq:ah_for}.
\end{lemma}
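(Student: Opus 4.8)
The plan is to prove the three inequalities separately, all by reduction to the standard arguments for Nitsche's method combined with the mesh-size-independent inverse and trace inequalities \eqref{eq:inv_ine1}--\eqref{eq:tra_ine2}. For coercivity of $a_h$, I would expand $\ah{\uh}{\uh} = \normregl2{\grad \uh}{\Dom}^2 - 2\ipreg{\dn \uh}{\uh}{\bou} + \pen \normregl2{h^{1/2}\uh}{\bou}^2$ and control the cross term: by Cauchy--Schwarz and Young's inequality, $2|\ipreg{\dn \uh}{\uh}{\bou}| \le 2 \normregl2{h^{1/2}\dn\uh}{\bou}\,\normregl2{h^{-1/2}\uh}{\bou} \le \epsilon^{-1}\normregl2{h^{1/2}\dn\uh}{\bou}^2 + \epsilon \normregl2{h^{-1/2}\uh}{\bou}^2$. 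The first piece is bounded, using the inverse inequality \eqref{eq:inv_ine2} summed over active cells (here is where quasi-uniformity and, crucially, the inverse inequality being valid on aggregates / not affected by the cut cell problem enter — one invokes it cellwise on $\Domex$ and uses $h_\cell \eqsim h$), by $C_{\rm inv}\,\normregl2{\grad\uh}{\Dom}^2$ — this needs a short remark that $\uh \in \Vh$ is an aggregated FE function, so the inverse estimate holds on the physical part of each cut cell with a constant independent of the cut; this is exactly the property inherited from Lemma \ref{lm:sta_agg_ext} together with \eqref{eq:inv_ine1}--\eqref{eq:inv_ine2}. Choosing $\epsilon = 1/2$ and then $\pen$ larger than, say, $2C_{\rm inv} + 1$ gives $\ah{\uh}{\uh} \ge \tfrac12\normregl2{\grad\uh}{\Dom}^2 + \tfrac12\normregl2{h^{-1/2}\uh}{\bou}^2 \ge \gamma_a \tnorm{\uh}^2$.

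For continuity of $a_h$, I would bound each of the four terms using Cauchy--Schwarz and, for the two Nitsche boundary terms, the same trick: $|\ipreg{\dn\uh}{\vh}{\bou}| \le \normregl2{h^{1/2}\dn\uh}{\bou}\normregl2{h^{-1/2}\vh}{\bou} \lesssim \normregl2{\grad\uh}{\Dom}\,\tnorm{\vh}$ again via \eqref{eq:inv_ine2}; the symmetric term is handled identically with the roles reversed, and the penalty term is $\pen\,\normregl2{h^{-1/2}\uh}{\bou}\normregl2{h^{-1/2}\vh}{\bou}$ directly. Summing gives $\ah{\uh}{\vh} \le \xi_a \tnorm{\uh}\tnorm{\vh}$ with $\xi_a$ depending on $C_{\rm inv}$ and $\pen$. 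For continuity of $b_h$, the volume term $|\ip{\div\vh}{\qh}| \le \normregl2{\div\vh}{\Dom}\normregl2{\qh}{\Dom} \le \sqrt{d}\,\normregl2{\grad\vh}{\Dom}\normregl2{\qh}{\Dom}$, while the boundary term $|\ipreg{\n\cdot\vh}{\qh}{\bou}| \le \normregl2{h^{-1/2}\vh}{\bou}\,\normregl2{h^{1/2}\qh}{\bou}$, and $\normregl2{h^{1/2}\qh}{\bou}^2 \lesssim \normregl2{\qh}{\Dom}^2$ by the trace inequality \eqref{eq:tra_ine2} applied cellwise together with the inverse inequality \eqref{eq:inv_ine1} to absorb the gradient term (again exploiting that $\qh$, living in $\Pqhd$ possibly extended by aggregation, satisfies these estimates uniformly in the cut). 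This yields $\bh{\vh}{\qh} \le \xi_b\tnorm{\vh}\tnorm{\qh}$.

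The main obstacle — and the only place where any care beyond routine Nitsche bookkeeping is needed — is justifying that the inverse and trace inequalities \eqref{eq:inv_ine1}--\eqref{eq:tra_ine2} may be applied to functions in the \emph{aggregated} spaces $\Vh, \Qh$ on the \emph{physical} (possibly cut) portions of cells with constants independent of the cut-cell intersection. For interior cells there is nothing to prove; for cut cells, $\uh|_\cell$ (resp. $\qh|_\cell$) is the restriction of a polynomial on the owner interior cell of the aggregate, and Lemma \ref{lm:sta_agg_ext} provides exactly the $L^2$- and $H^1$-stability of that extension over $\Domex$ in terms of the interior-cell norms. Concretely, I would state a short auxiliary estimate — or simply cite \cite[Corollary 5.3]{Badia2017} and the argument around Lemma \ref{lm:sta_agg_ext} — to the effect that for $\wh \in \Vh$ (or $\Qh$) one has $\normregl2{\grad\wh}{\cell} \lesssim h^{-1}\normregl2{\wh}{\cell\cap\Dom}$ and $\normregl2{\wh}{\bou\cap\cell} \lesssim h^{-1/2}\normregl2{\wh}{\cell\cap\Dom} + h^{1/2}\normregl2{\grad\wh}{\cell\cap\Dom}$ uniformly, because the aggregate has diameter $\eqsim \gamma h$ and the polynomial degree is fixed. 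Everything else is the classical choice of $\pen$ large enough, exactly as in the standard symmetric Nitsche analysis.
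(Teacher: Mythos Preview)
Your proposal is correct and follows essentially the same route as the paper: the paper simply cites \cite[Th.~5.7]{Badia2017} for the coercivity and continuity of $a_h$ (whose proof is precisely the standard Nitsche argument you spell out, with the cut-cell-independent inverse estimate justified via Lemma~\ref{lm:sta_agg_ext}), and for $b_h$ it invokes Cauchy--Schwarz together with \eqref{eq:inv_ine1} and \eqref{eq:tra_ine2} on the boundary term, exactly as you do. One cosmetic point: your expansion of the penalty term reads $\pen\,\normregl2{h^{1/2}\uh}{\bou}^2$ but your conclusion (correctly) produces control of $\normregl2{h^{-1/2}\uh}{\bou}^2$; the penalty weight should be $h^{-1}$, so fix the exponent in the display to keep the argument internally consistent.
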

\begin{proof}
  The continuity and stability of $a_h$ can be found, e.g., in \cite[Th. 5.7]{Badia2017}. The continuity of $b_h$ is obtained by using in its two terms the Cauchy-Schwarz inequality and in the second one the inequalities \eqref{eq:inv_ine1} and  \eqref{eq:tra_ine2} (see also \cite{massing_stabilized_2014}). 
\end{proof}

Next, we prove the unfitted inf-sup condition for \ac{agfe} spaces, using the following strategy. First, we introduce some definitions for the concept of improper facets and aggregates (Defs. \ref{def:agg_int_bub}-\ref{def:imp_int_set}  and \ref{def:agg_bub}-\ref{def:imp_agg_set}) which are the ones that will require some type of stabilization, due to the potential deficiency of the discrete inf-sup condition. Second, we prove a weak inf-sup condition for a particular type of mixed \ac{fe} spaces in Th. \ref{th:inf_sup_hig_ord}. Finally, using an abstract definition of the pressure stabilization term that satisfies Asm. \ref{ass:pre_sta}, we prove the stability of the \ac{agfe} method for the Stokes problem in Th. \ref{th:fin_inf_sup}. In the subsequent sections, we will consider different realizations of mixed \ac{fe} spaces, analyze how to determine a superset of improper facets/aggregates, and define a pressure stabilization fulfilling Asm. \ref{ass:pre_sta} for \eqref{eq:sto_ope} to be well-posed.

Let us define the set of aggregate interfaces:
$$ \qquad F_{AB} \doteq \partial A \cap \partial B, \ \ A, \, B \in \meshag,  \qquad 
\agints \doteq \{ F_{AB} \, : \, A, \, B \in \meshag \}.
$$
We note that, since  $A \subset \Omega$ for any aggregate $A \in \meshag$ by its definition in \eqref{eq:def_agg}, $F_{AB}$ can include a cut facet of a cut cell.

\begin{definition}[Aggregate interface bubble] \label{def:agg_int_bub}
Given an aggregate interface $F_{AB} \in \agints$ shared by $A, \, B \in \meshag$, an \emph{aggregate interface bubble } is a function $\bsphih^{F_{AB}} \in \Vh$ with $\sup(\bsphih^{F_{AB}}) \subseteq AB \doteq A \cup B$ such that 
  $$
  \int_{F_{AB}} \bsphih^{F_{AB}} \cdot \n > C | F_{AB} |> 0, \quad \| \bsphih^{F_{AB}} \|_{\infty} \leq C, \quad \bsphih^{F_{AB}} \cdot \n = {0} \ \ \hbox{on} \ \ \partial AB.$$
\end{definition}

\begin{definition}[Improper interface set of $\Vh$] \label{def:imp_int_set}
Given an aggregate interface $F_{AB}$ shared by $A, \, B \in \meshag$, it is improper with respect to $\Vh$ if there not exists any interface bubble satisfying the properties of Def. \ref{def:agg_int_bub}. The set of improper interfaces is denoted by $\agints^-$. Its complement is represented with $\agints^+ \doteq \agints \setminus \agints^-$.
\end{definition}

\begin{definition}[Aggregate  bubble] \label{def:agg_bub}
Given an aggregate $A \in \meshag$, an \emph{aggregate bubble} is a function ${\phi}^{A}_h \in \Qbord{2}(A) \cap H_0^1(A)$ with $\sup({\phi}^{A}_h) \subseteq A$ such that:
  \begin{align}\label{eq:agg_bub_pro}
    \bsphih^{A}(\x)  \geq 0,  \quad \int_{A} \bsphih^{A}  > C | A |> 0, \quad \| \bsphih^{A} \|_{\infty} \leq C.
    \end{align}
\end{definition}

\begin{definition}[Improper aggregate set of $\Vh$]  \label{def:imp_agg_set}
Given an aggregate  $A  \in \meshag$, it is improper with respect to $\Vh$  if there not exists any aggregate bubble satisfying the properties of Def. \ref{def:agg_bub}. The set of improper aggregates is denoted by $\meshag^-$. Its complement is represented with $\meshag^+ \doteq \meshag \setminus \meshag^-$.
\end{definition}

In the next lemma we propose a weak version of the standard Fortin interpolator (see, e.g., \cite{ern_theory_2004}), which we denote as \emph{quasi-}Fortin interpolator.
\begin{lemma}[Quasi-Fortin interpolant] \label{lm:qua_For_int}
For any $\v \in \H^1_{\bs{0}}(\Omega)$, there exists a function $\pi^{qF}_h(\v) \in \Vh$ such that:
\begin{equation} \label{eq:qua_For_pro}
\int_{F} \v \cdot \n = 
\int_{F} \pi_h^{qF}(\v) \cdot \n \quad \forall  F \in \agints^+,  \qquad \tnorm{\pi_h^{qF}(\v)} \leq \xi_{qF} \normregh1{\v}{\Dom},
\end{equation}
for a positive constant $\xi_{qF} > 0$.
\end{lemma}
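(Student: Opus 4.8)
The plan is to build $\pi_h^{qF}(\v)$ as a stable interpolant corrected by aggregate interface bubbles, in the spirit of the classical Fortin construction. First I would take $w_h \doteq \pisz(\v) \in \Vh$, the extended Scott--Zhang interpolant introduced in Section \ref{sec:agg_fe_spa}. Its $\tnorm{\cdot}$-stability, $\tnorm{w_h} \lesssim \normregh1{\v}{\Dom}$, follows from the standard local $H^1$-stability of the interior Scott--Zhang operator on $\meshin$ \cite{scott_finite_1990}, the $L^2$- and $H^1$-stability of the extension operator from Lemma \ref{lm:sta_agg_ext} (which controls the $\normregl2{\grad w_h}{\Dom}$ contribution), and, for the boundary term $\normregl2{h^{-\frac12} w_h}{\bou}$, the fact that $\v$ vanishes on $\bou$ together with the first-order ($m=1$) instance of the approximation estimate in Theorem \ref{th:app_sco_zha}, applied piecewise to the planar cut pieces of $\bou$ inside each aggregate. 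I would then set
$$ \pi_h^{qF}(\v) \doteq \pisz(\v) + \sum_{F_{AB} \in \agints^+} c_{AB}\, \bsphih^{F_{AB}}, $$
where, for each proper interface $F_{AB} \in \agints^+$, $\bsphih^{F_{AB}} \in \Vh$ is an aggregate interface bubble as in Definition \ref{def:agg_int_bub} (which exists precisely because $F_{AB}$ is proper), and the scalars $c_{AB}$ remain to be fixed.

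Next I would show that the flux-matching requirement decouples into one scalar equation per proper interface. Since the support of $\bsphih^{F_{AB}}$ is contained in $A \cup B$ and $\bsphih^{F_{AB}} \cdot \n = 0$ on $\partial(A \cup B)$, and since every other aggregate interface $F' = F_{CD}$ either does not meet $A \cup B$ or lies on $\partial(A \cup B)$, one obtains $\int_{F'} \bsphih^{F_{AB}} \cdot \n = 0$ for all $F' \neq F_{AB}$. Hence imposing $\int_{F_{AB}} \pi_h^{qF}(\v) \cdot \n = \int_{F_{AB}} \v \cdot \n$ for every $F_{AB} \in \agints^+$ forces
$$ c_{AB} = \brackets{\int_{F_{AB}} \bsphih^{F_{AB}} \cdot \n}^{-1} \int_{F_{AB}} \brackets{\v - \pisz(\v)} \cdot \n, $$
and the first property in Definition \ref{def:agg_int_bub} bounds the denominator below by $C | F_{AB} | > 0$, so each $c_{AB}$ is well defined and the first property in \eqref{eq:qua_For_pro} holds by construction.

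It then remains to bound $\tnorm{\pi_h^{qF}(\v) - \pisz(\v)}$. For each term I would estimate the numerator of $c_{AB}$ with the Cauchy--Schwarz inequality on $F_{AB}$, the trace inequality \eqref{eq:tra_ine} applied to the background cell whose facet contains $F_{AB}$, and the approximation bound for $\pisz$, getting $|c_{AB}| \lesssim |F_{AB}|^{-1}\,|F_{AB}|^{\frac12}\, h^{\frac12}\, | \v |_{1,\omega(A)\cup\omega(B)}$; and I would bound $\tnorm{\bsphih^{F_{AB}}}$ by combining $\| \bsphih^{F_{AB}} \|_{\infty} \le C$, $|A \cup B| \lesssim h^d$, the inverse inequality \eqref{eq:inv_ine1} and the trace inequality \eqref{eq:tra_ine2}. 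Multiplying these, summing over $\agints^+$ using the finite overlap of the neighbourhoods $\omega(A)$ (a consequence of quasi-uniformity and the $\gamma h$ bound on aggregate diameters), and adding back the stability of $\pisz$, I would conclude $\tnorm{\pi_h^{qF}(\v)} \leq \xi_{qF}\, \normregh1{\v}{\Dom}$ with $\xi_{qF}$ independent of $h$ and of the cut geometry.

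The main obstacle is the uniform control of the coefficients $c_{AB}$: a priori, the factor $|F_{AB}|^{-1}$ coming from the denominator, multiplied by the scales entering $\tnorm{\bsphih^{F_{AB}}}$ and the numerator of $c_{AB}$, need not be bounded independently of the cut, because an aggregate interface can have arbitrarily small $(d-1)$-measure. This is exactly what the improper-interface mechanism is for: every $F_{AB}$ that remains in $\agints^+$ carries a bubble whose flux is bounded below by $C|F_{AB}|$ with a fixed constant, while the geometric quantities entering the bubble energy are comparable to $h$, so the dangerous ratios collapse; interfaces for which no such bubble exists have been relegated to $\agints^-$, where the pressure stabilization satisfying Assumption \ref{ass:pre_sta} will take over in Theorem \ref{th:fin_inf_sup}. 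Making this cancellation precise---rather than the bookkeeping with the already available inverse, trace, and Scott--Zhang estimates---is where the real work lies.
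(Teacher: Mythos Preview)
Your construction and the chain of estimates match the paper's proof exactly: extended Scott--Zhang interpolant corrected by interface bubbles with coefficients fixed by the flux defect on each $F\in\agints^+$, then Cauchy--Schwarz, the trace inequality \eqref{eq:tra_ine2}, the Scott--Zhang approximation of Theorem~\ref{th:app_sco_zha}, and the $L^\infty$ bound on the bubbles combined with scaling. On the $|F_{AB}|^{-1}$ obstacle you flag, the paper performs no subtle cancellation---in \eqref{eq:bou_fac_com1} it simply replaces $|F|^{-1/2}$ by $h^{-(d-1)/2}$, i.e., it uses $|F|\gtrsim h^{d-1}$ for proper interfaces, which is not forced by Definitions~\ref{def:agg_int_bub}--\ref{def:imp_int_set} in the abstract but does hold in Algorithms~\ref{alg:mix_lag} and~\ref{alg:mix_ser}, where the interfaces certified as proper are full facets between interior cells.
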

\begin{proof}
Given a function $\v \in \H^1_{\bs{0}}(\Omega)$, let us consider, e.g., the extended Scott-Zhang interpolant $\pi_h^{SZ}(\vh)$ with the optimal approximability properties in Th. \ref{th:app_sco_zha}. For \ac{agfe} spaces with serendity extensions, we would consider $\breve{\pi}_h^{SZ}(\vh)$ instead. From Def. \ref{def:agg_int_bub}, at every proper interface $F \in \agints^+$, we can compute $\zeta_{F}(\v) \in \mathbb{R}$ such that:
\begin{equation} \label{eq:fac_val}
\zeta_{F}(\v) \int_F  \bsphih^F \cdot \n = \int_{F}  (\v - \pi_h^{SZ}(\v) )\cdot \n,
\end{equation}
and define $\bszetah(\v) = \sum_{F \in \agints^+} \bsphih^F \zeta_F(\v)$. Thus, taking $\pi_h^{qF}(\v) \doteq \pi_h^{SZ}(\v) + \bszetah(\v)$, one readily checks the equality in \eqref{eq:qua_For_pro}. Next, we prove the stability of the quasi-Fortin interpolant. We can bound $\zeta_F(\v)$, since $F \in \agints^+$, as follows. Let us represent with $A_F \in \meshag$ one of the two aggregates sharing $F$. The definition of the aggregate bubble in Def. \ref{def:agg_bub}, the extended Scott-Zhang approximability properties in \eqref{eq:sco_zha_int}, the inverse inequality \eqref{eq:tra_ine2}, and the Cauchy-Schwarz inequality yield:
\begin{align}
  \zeta_F & = \frac{\int_F  (\v - \pi_h^{SZ}(\v) ) \cdot \n }{\int_F  \bsphih^F\cdot \n}  \lesssim \frac{\int_F (\v - \pi_h^{SZ}(\v) )\cdot \n}{| F |} \lesssim \frac{ \normregl2{\v - \pi_h^{SZ}(\v)}{F}}{|F|^{\frac{1}{2}}}
  \label{eq:bou_fac_com1} \\  
&\lesssim  h^{-\frac{d-1}{2}} \normregl2{\v - \pi_h^{SZ}(\v)}{\partial A_F}
\lesssim h^{-\frac{d-1}{2}} ( h^{-\frac{1}{2}} \normregl2{\v - \pi_h(\v)}{A_F} + h^{\frac{1}{2}}\seminormregh1{\v - \pi_h(\v)}{A_F}) \lesssim  h^{-\frac{d-2}{2}} \normregh1{\v}{ \omega(A_F)}. \nonumber  
\end{align}
Using scaling arguments and the properties of the interface bubbles in Def. \ref{def:agg_int_bub} and \eqref{eq:bou_fac_com1}, and the fact that for any interior cell $K \in \meshin$,  the cardinality of the set $\{ A \in \meshag \ : \ K \subseteq \omega(A) \}$ is bounded independently of $h$, we get:
\begin{align}\label{eq:bou_fac_com2}
\tnorm{\bszetah(\v)}^2 = \sum_{F \in \agints^+} \normregh1{\zeta_F \bsphih^F}{AB_F}^2 \lesssim \sum_{F \in \agints^+} \zeta_F^2  h^{d-2}\| \bsphih^F \|^2_{L^\infty(AB_F)} \lesssim \sum_{F \in \agints^+} \normregh1{\v}{\omega(A_F)}^2 \lesssim \normregh1{\v}{\Dom}^2.
\end{align}
This result, combined with the stability and approximability of the Scott-Zhang projector and the triangle inequality, leads to the stability of the quasi-Fortin interpolant in \eqref{eq:qua_For_pro}:
\begin{align}\label{eq:sta_qua_For}
\tnorm{ \pi_h^{qF} (\v) + \bszetah(\v) } \lesssim \tnorm{ \pi_h^{qF}(\v)} + \tnorm{ \bszetah(\v)} \lesssim \normregh1{\v}{\Dom}.
\end{align}
It proves the lemma.
\end{proof}

\begin{lemma} \label{lm:bub_sta}
  Given an aggregate $A \in \meshag^+$, we consider the aggregate bubble function ${\phi}^A_h \in H^1_{{0}}(A)$ that satisfies the properties in Def. \ref{def:imp_agg_set}. For any $\ph \in \Qh$, the function
  $
\vph{\ph} \doteq  \sum_{A \in \meshag^+} {\phi}^A_h \h^2 \grad \ph
$
satisfies the following properties:
\begin{align}\label{eq:bub_sta}
\frac{1}{\beta_0'} \sum_{A \in \meshag^+} \normregl2{\ph - \pi_h^{-,0}(\ph)}{A}^2 \leq b_h(\bs{\varphi}_h(\ph),\ph), \qquad \tnorm{\vph{\ph}}^2 \leq \sum_{A \in \meshag^+} \normregl2{\ph - \pi_h^{-,0}(\ph)}{A}^2.
\end{align}
\end{lemma}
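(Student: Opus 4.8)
The plan is to reduce both estimates in \eqref{eq:bub_sta} to elementary polynomial inequalities on each individual proper aggregate, exploiting crucially that the bubble-based test function $\vph{\ph}$ vanishes on all aggregate boundaries. First I would record the basic reductions. On $A\in\meshag^+$ one has $\vph{\ph}|_A=\h^2\phi^A_h\grad\ph$, where $\grad\ph$ is the gradient of the single polynomial $\ph|_A$ of the aggregated discontinuous pressure space, and $\vph{\ph}$ vanishes outside $\bigcup_{A\in\meshag^+}A$. Since $\phi^A_h\in H_0^1(A)$, the function $\vph{\ph}$ vanishes on every $\partial A$, and in particular on $\bou$; hence the Nitsche contribution to $\tnorm{\cdot}$ drops and $\tnorm{\vph{\ph}}^2=\sum_{A\in\meshag^+}\normregl2{\grad(\vph{\ph})}{A}^2$. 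Likewise $\bh{\vph{\ph}}{\ph}=-\ip{\div\vph{\ph}}{\ph}$ (the Nitsche term in $b_h$ vanishes too), and integrating by parts aggregate by aggregate — with all boundary integrals vanishing because $\phi^A_h|_{\partial A}=0$ — gives $\bh{\vph{\ph}}{\ph}=\sum_{A\in\meshag^+}\h^2\int_A\phi^A_h|\grad\ph|^2\geq 0$, using $\phi^A_h\geq 0$. Both inequalities then reduce, on each $A$, to comparing $\h^2\int_A\phi^A_h|\grad\ph|^2$ with $\normregl2{\ph-\pi_h^{-,0}(\ph)}{A}^2$, where $\pi_h^{-,0}(\ph)|_A$ is the $L^2(A)$-projection of $\ph$ onto constants (the aggregatewise mean).

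For the upper bound I would expand $\grad(\h^2\phi^A_h\grad\ph)$ by the Leibniz rule into $\h^2\big(\grad\phi^A_h\otimes\grad\ph+\phi^A_h\,\mathrm{D}^2\ph\big)$ and bound its $L^2(A)$ norm by $\h^2\big(\|\grad\phi^A_h\|_{L^\infty(A)}\normregl2{\grad\ph}{A}+\|\phi^A_h\|_{L^\infty(A)}\normregl2{\mathrm{D}^2\ph}{A}\big)$. Inverse inequalities on $A$ — legitimate because an aggregate is a connected union of a bounded number of quasi-uniform cells — together with $\|\phi^A_h\|_{L^\infty(A)}\leq C$ give $\|\grad\phi^A_h\|_{L^\infty(A)}\lesssim\h^{-1}$ and $\normregl2{\mathrm{D}^2\ph}{A}\lesssim\h^{-1}\normregl2{\grad\ph}{A}$, hence $\normregl2{\grad(\vph{\ph})}{A}\lesssim\h\normregl2{\grad\ph}{A}$. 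A final inverse inequality, applied to the polynomial $\ph-\pi_h^{-,0}(\ph)$ whose gradient is $\grad\ph$, gives $\normregl2{\grad\ph}{A}\lesssim\h^{-1}\normregl2{\ph-\pi_h^{-,0}(\ph)}{A}$. Squaring and summing over $A\in\meshag^+$ proves the second estimate of \eqref{eq:bub_sta} (the generic constant is absorbed by rescaling $\phi^A_h$, or the bound read up to a constant).

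For the lower bound, on each $A\in\meshag^+$ I would pass to the uncut aggregate $\widehat A\doteq\bigcup_i K_i\supseteq A$, a union of a bounded number of quasi-uniform Cartesian cells that contains the interior owner cell, so that $|A|\eqsim|\widehat A|\eqsim\h^d$. From $\phi^A_h\geq 0$, $\|\phi^A_h\|_{L^\infty(A)}\leq C$ and $\int_A\phi^A_h\geq C'|A|$, a Chebyshev-type argument produces a measurable set $E\subseteq A$ on which $\phi^A_h\geq c_0>0$ and with $|E|\gtrsim|A|\gtrsim|\widehat A|$; hence $\h^2\int_A\phi^A_h|\grad\ph|^2\geq c_0\h^2\normregl2{\grad\ph}{E}^2$. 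A Remez-type inequality for polynomials of bounded degree on $\widehat A$ then yields $\normregl2{\grad\ph}{E}\gtrsim\normregl2{\grad\ph}{\widehat A}$, with a constant independent of $\h$ and of the cut, while the Poincar\'e--Wirtinger inequality on $\widehat A$ (constant $\lesssim\h$, since $\widehat A$ is connected, Lipschitz, of diameter $\lesssim\h$ with bounded geometric complexity) gives $\normregl2{\ph-(\ph)_{\widehat A}}{\widehat A}\lesssim\h\normregl2{\grad\ph}{\widehat A}$, where $(\ph)_{\widehat A}$ is the mean of $\ph$ over $\widehat A$. Chaining these,
\[
\h^2\int_A\phi^A_h|\grad\ph|^2 \;\gtrsim\; \h^2\normregl2{\grad\ph}{\widehat A}^2 \;\gtrsim\; \normregl2{\ph-(\ph)_{\widehat A}}{\widehat A}^2 \;\geq\; \normregl2{\ph-\pi_h^{-,0}(\ph)}{A}^2 ,
\]
the last step using that $\pi_h^{-,0}(\ph)|_A$ is the $L^2(A)$-best constant approximation of $\ph$; summing over $A\in\meshag^+$ closes the first estimate of \eqref{eq:bub_sta} with $1/\beta_0'$ the accumulated constant.

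The routine ingredients are the Leibniz rule, the inverse and trace inequalities on aggregates, and the integration by parts. The step I expect to be the main obstacle is the uniformity of the lower bound, i.e.\ that $\beta_0'$ blows up neither as $\h\to 0$ nor as the cut-cell intersection degenerates. This is precisely where the three defining properties of the aggregate bubble are needed — they force $\phi^A_h$ to remain bounded below on a subset of $A$ of measure comparable to $|A|$ — and where one is obliged to transport the Remez and Poincar\'e estimates to the uncut aggregate $\widehat A$, whose shape ranges over a finite family up to scaling, so that those constants are uniform. The hypothesis $A\in\meshag^+$ is exactly what guarantees that a usable bubble $\phi^A_h$ exists in the first place.
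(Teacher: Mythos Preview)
Your proposal is correct and, for the continuity bound, essentially coincides with the paper's argument: both exploit that $\vph{\ph}$ vanishes on $\bou$ and on aggregate boundaries, then combine inverse inequalities with $\grad\ph=\grad(\ph-\pi_h^{-,0}(\ph))$.

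For the coercivity bound the two proofs diverge. The paper obtains the key estimate $\int_A\phi^A_h|\grad\ph|^2\gtrsim\normregl2{\grad\ph}{A}^2$ in one stroke, by asserting that on a finite-dimensional polynomial space the weighted and unweighted $L^2$ norms on $A$ are equivalent (using $\phi^A_h\geq 0$, $\int_A\phi^A_h\gtrsim|A|$, and a scaling argument), and then applies Poincar\'e--Wirtinger directly on $A$. You instead make the cut-independence fully explicit: a Chebyshev argument extracts $E\subset A$ with $|E|\gtrsim|A|$ and $\phi^A_h\geq c_0$ on $E$, a Remez-type inequality transfers $\normregl2{\grad\ph}{E}$ to $\normregl2{\grad\ph}{\widehat A}$ on the uncut aggregate, and Poincar\'e--Wirtinger is applied on $\widehat A$ (whose shape lies in a finite family up to scaling), with the best-approximation property bringing the estimate back to $A$. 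Your route is longer and imports a nontrivial polynomial inequality (Remez/Brudnyi--Ganzburg), but it makes transparent exactly where uniformity in the cut comes from; the paper's one-line norm-equivalence hides this in the phrase ``scaling arguments,'' which ultimately works because the aggregate always contains the full owner cell $\owner{A}$, on which standard reference-element scaling applies.
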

\begin{proof}
The fact that $\vph{\ph}$ vanishes on $\bou$, the definition of the norms in \eqref{eq:mes_nor}, and the inverse inequality \eqref{eq:inv_ine1}, yield the continuity bound in \eqref{eq:bub_sta}:
\begin{align}
  \tnorm{\vph{\ph}}^2 =  \normregh1{\vph{\ph}}{\Dom}^2 \lesssim \sum_{A \in \meshag^+} h^2 \| \grad \ph \|^2  = \sum_{A \in \meshag^+} h^2 \| \grad (\ph - \pi_h^{-,0}(\ph))\|^2 \lesssim \sum_{A \in \meshag^+} \normregl2{\ph - \pi_h^{-,0}(\ph)}{A}^2.
\end{align}
Next, we note that given a proper aggregate $A \subset \meshag^+$ and \ac{fe} function $\vh$, using scaling arguments, the first two properties in \eqref{eq:agg_bub_pro}, and the equivalence of norms in finite dimension, we have:
\begin{align} \label{eq:equ_nor}
C_- \ipreg{\vh}{\vh}{A} \leq
\ipreg{{\phi}^A_h \vh}{\vh}{A} \leq 
C_+ \ipreg{\vh}{\vh}{A},
\end{align}
for positive constants independent of $h$ and cut cell intersections. Thus, integrating by parts the first term in \eqref{eq:bh_for}, using the definition of $\vph{\ph}$ in the statement of Lem. \ref{lm:bub_sta}, the fact that aggregate bubbles vanish on aggregate boundaries (see Def. \ref{def:agg_bub}), and the equivalence of norms in \eqref{eq:equ_nor}, we obtain:
\begin{align}
 b_h(\vph{\ph},\ph) &=  \sum_{A \in \meshag^+} \ipreg{ \vph{\ph} }{\grad \ph}{A} = \sum_{A \in \meshag^+} h^2 \ipreg{{\phi}^A_h}{|\grad \ph|^2}{A} \gtrsim \sum_{A \in \meshag^+} h^2 \normregl2{ \grad \ph}{A}^2. \label{eq:bub_sta_bou2}
\end{align}
On the other hand, since $(\ph - \pi_h^{-,0}(\ph))|_A \in \Qh(A) \cap L_0^2(A)$, it holds from the Poincar\'e-Wirtinger inequality with a scaling argument:
\begin{align}\label{eq:bub_sta_bou3}
  \normregl2{\ph - \pi_h^{-,0}(\ph)}{A} \lesssim h \normregl2{\grad (\ph - \pi_h^{-,0}(\ph))}{A} = h \normregl2{\grad \ph}{A}.\end{align}
Combining \eqref{eq:bub_sta_bou2} and \eqref{eq:bub_sta_bou3}, we prove the lemma.
\end{proof}

In what follows, we will make use of the jump operator over facets:
\begin{align}
  \jump{p}(\x) =  \lim_{\epsilon \to 0^+} \left( p(\x + \epsilon \n) - p(\x-\epsilon \n) \right), \qquad \forall \x \in F, \ \ \forall F \in \agints,
\end{align}
where $\n$ is a normal to the facet.

\begin{lemma} \label{lm:inf_sup_con_pre}
Let us consider the mixed \ac{fe} space $\Vh \times \Qh$ for $\Qh \doteq \Pordhd{0} \cap L^2_0(\Dom)$. Then, for any $p_h \in \Pordhd{0} \cap L^2_0(\Dom)$, there exists a $\vh \in \Vh$ such that:
\begin{align}\label{eq:inf_sup_con_pre}
  \frac{1}{\beta_0}\normregl2{\ph}{\Dom}^2 \leq \bh{\vh}{\ph} +  \sum_{F \in \agints^-}h \normregl2{\jump{p}}{F}^2,  \qquad \tnorm{\vh} \leq \normregl2{\ph}{\Dom},
\end{align}
for a positive constant $\beta_0$.
\end{lemma}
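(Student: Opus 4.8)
The plan is to transfer the continuous inf-sup condition \eqref{eq:inf_sup_con} to the aggregated space, exploiting that a pressure which is piecewise constant on aggregates couples to the velocity only through the normal fluxes across aggregate interfaces. First I would use the surjectivity of $\div:\H^1_{\bs 0}(\Dom)\to L^2_0(\Dom)$ contained in \eqref{eq:inf_sup_con} to pick $\v\in\H^1_{\bs 0}(\Dom)$ with $\div\v=-\ph$ and $\normregh1{\v}{\Dom}\lesssim\beta^{-1}\normregl2{\ph}{\Dom}$, so that $-\ip{\div\v}{\ph}=\normregl2{\ph}{\Dom}^2$, and then map it into $\Vh$ with the quasi-Fortin interpolant $\pi_h^{qF}(\v)$ of Lemma \ref{lm:qua_For_int}, which preserves the normal flux across every proper interface $F\in\agints^+$ and satisfies $\tnorm{\pi_h^{qF}(\v)}\lesssim\normregh1{\v}{\Dom}$.

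The core algebraic step is an integration-by-parts identity. Since $\ph|_A$ is constant on each $A\in\meshag$, applying the divergence theorem aggregate by aggregate, splitting $\partial A$ into its part on $\bou$ and the aggregate interfaces $F_{AB}$, and cancelling the $\bou$-contribution against the Nitsche boundary term in $b_h$ (see \eqref{eq:bh_for}), I would obtain $\bh{\wh}{\ph}=\sum_{F\in\agints}\int_F\jump{\ph}\,\wh\cdot\n$ for every (single-valued) $\wh\in\Vh$. Running the same computation on $\v$, whose $\bou$-contribution vanishes because $\v$ has zero trace, gives $\normregl2{\ph}{\Dom}^2=\sum_{F\in\agints}\int_F\jump{\ph}\,\v\cdot\n$. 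Subtracting, using that $\jump{\ph}$ is constant on each facet, and invoking the Fortin property on $\agints^+$ to annihilate those terms, I arrive at $\normregl2{\ph}{\Dom}^2=\bh{\pi_h^{qF}(\v)}{\ph}+\sum_{F\in\agints^-}\int_F\jump{\ph}\,(\v-\pi_h^{qF}(\v))\cdot\n$.

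It then remains to absorb the last sum. The key observation is that the bubble correction $\bszetah(\v)=\sum_{F'\in\agints^+}\zeta_{F'}(\v)\bsphih^{F'}$ of the quasi-Fortin interpolant has vanishing normal component on every improper interface: each $\bsphih^{F'}$ is supported on $\overline{A'\cup B'}$ and vanishes in the normal direction on $\partial(A'\cup B')$ by Def. \ref{def:agg_int_bub}, while any proper interface $F'$ touching an improper $F=F_{AB}$ shares exactly one aggregate with it, so $F$ lies in $\partial(A'\cup B')$ up to a null set. Hence $(\v-\pi_h^{qF}(\v))\cdot\n=(\v-\pi_h^{SZ}(\v))\cdot\n$ on each $F\in\agints^-$, and a facetwise Cauchy--Schwarz inequality followed by the trace approximability estimate $\normregl2{\v-\pi_h^{SZ}(\v)}{F}\lesssim\h^{1/2}\seminormregh1{\v}{\omega(A_F)}$ of Theorem \ref{th:app_sco_zha} (the standard Scott--Zhang bound, with $p=2$, $m=1$), Young's inequality, the finite overlap of the patches $\omega(A_F)$, and $\normregh1{\v}{\Dom}\lesssim\beta^{-1}\normregl2{\ph}{\Dom}$, bound the remainder by $\delta\,\normregl2{\ph}{\Dom}^2+C_\delta\sum_{F\in\agints^-}\h\normregl2{\jump{\ph}}{F}^2$ for any $\delta>0$. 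Finally I would rescale, taking $\vh\doteq c\,\pi_h^{qF}(\v)$ for a sufficiently small constant $c$ (depending only on $\beta$, $\xi_{qF}$, and the constants just produced) so that the absorbed term is at most $\tfrac12 c\,\normregl2{\ph}{\Dom}^2$ and the coefficient in front of the jump sum is at most one; then $\tnorm{\vh}\leq\normregl2{\ph}{\Dom}$ and the inequality reads $\tfrac{c}{2}\normregl2{\ph}{\Dom}^2\leq\bh{\vh}{\ph}+\sum_{F\in\agints^-}\h\normregl2{\jump{\ph}}{F}^2$, which is the claim with $\beta_0\doteq 2/c$. The serendipity case is identical, replacing $\pi_h^{SZ}$ and $\pi_h^{qF}$ by their serendipity counterparts.

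The main obstacle is the bookkeeping in the integration-by-parts identity — tracking the orientations of $\n$ on the interfaces, the fact that each aggregate is intersected with $\Dom$, and checking that the $\bou$-terms exactly cancel the Nitsche term — together with the (somewhat delicate) geometric claim that the interface-bubble correction carries no normal component across improper facets; once these are in place, the remaining steps are standard Fortin-type estimates, trace/approximability bounds, and the final rescaling to normalize the constants.
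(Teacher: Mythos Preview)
Your proof is correct and follows essentially the same route as the paper: continuous inf-sup $\Rightarrow$ $\v$ with $\div\v=-\ph$, integration by parts aggregate-by-aggregate to reduce $b_h$ to facet fluxes of $\jump{\ph}$, quasi-Fortin to pass to $\Vh$ preserving fluxes on $\agints^+$, then control the residual on $\agints^-$ by trace/approximability bounds, Young's inequality, and a final rescaling. The one substantive difference is your extra geometric observation that $\bszetah(\v)\cdot\n$ vanishes on improper facets (so that $\v-\pi_h^{qF}(\v)$ may be replaced by $\v-\pi_h^{SZ}(\v)$ there); this is correct---since any bubble $\bsphih^{F'}$ with $F'\in\agints^+$ either has support disjoint from $F\in\agints^-$ or shares exactly one aggregate with $F$, in which case $F\subset\partial(A'\cup B')$ up to a null set---but it is not needed: the paper simply bounds $\|\v-\pi_h^{qF}(\v)\|_F$ directly, using that the bubble correction inherits the same local $O(h)$ estimates from \eqref{eq:bou_fac_com1}--\eqref{eq:bou_fac_com2}.
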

\begin{proof}
Relying on the continuous inf-sup condition \eqref{eq:inf_sup_con}, for any $\ph \in \Pordhd{0} \cap L^2_0(\Dom)$ there exists a $\v \in  \H_{\bs{0}}^1(\Omega)$ such that: 
\begin{align}
 b_h(\v,p_h)= & -\ipreg{\div \v}{\ph}{\Dom}
  + \ipreg{ \v \cdot \n}{\ph}{\bou}  
   = - \sum_{F \in \agints} \ipreg{\v \cdot \n}{\jump{\ph}}{F} 
  \geq \frac{1}{\beta} \normregl2{ p_h }{\Dom}^2,  \qquad \normregh1{\v}{\Dom} \lesssim \normregl2{\ph}{\Dom}, \label{eq:con_inf_sup_con_pre}
\end{align}
where we have used integration by parts and added up the contributions from both cells sharing an interior interface. Using the properties of the quasi-Fortin interpolant in \eqref{eq:qua_For_pro}, after some algebraic manipulation, we obtain:
\begin{align}
  b(\pi_h^{qF}(\v),\ph) &= -\ipreg{\div \pi_h^{qF}(\v)}{\ph}{\Dom}
                          + \ipreg{\pi_h^{qF}(\v) \cdot \n}{\ph}{\bou}
                          = -\sum_{F \in \agints} \ipreg{\pi_h^{qF}(\v) \cdot \n}{\jump{\ph}}{F} \\
                        & = -\sum_{F \in \agints^+} \ipreg{\pi_h^{qF}(\v) \cdot \n}{\jump{\ph}}{F}
                          - \sum_{F \in \agints^-} \ipreg{\pi_h^{qF}(\v) \cdot \n}{\jump{\ph}}{F}\\
                        &= -\sum_{F \in \agints^+} \ipreg{\v \cdot \n}{\jump{\ph}}{F}
                          - \sum_{F \in \agints^-} \ipreg{\pi_h^{qF}(\v) \cdot \n}{\jump{\ph}}{F}\\
                        &= -\sum_{F \in \agints} \ipreg{\v \cdot \n}{\jump{\ph}}{F}
                          + \sum_{F \in \agints^-} \ipreg{(\v - \pi_h^{qF}(\v)) \cdot \n}{\jump{\ph}}{F}.\label{eq:inf_sup_con_pre_bou1}
\end{align}
We can bound the last term in \eqref{eq:inf_sup_con_pre_bou1} using the trace inequality \eqref{eq:tra_ine2}, the local Scott-Zhang interpolant error estimate in Th. \ref{th:app_sco_zha} for $\v \in H_{\bs{0}}^1(\Dom)$, the second bound in \eqref{eq:con_inf_sup_con_pre}, and Young's and Cauchy-Schwarz inequalities as follows:
\begin{align}
 \sum_{F \in \agints^-} \ipreg{(\v - \pi_h^{qF}(\v)) \cdot \n}{\jump{\ph}}{F} & \leq \sum_{F \in \agints^-} \normregl2{\v - \pi_h^{qF}(\v)}{F} \normregl2{\jump{\ph}}{F} \\
                                                        & \lesssim \sum_{F \in \agints^-} (h^{-\frac{1}{2}} \normregl2{\v - \pi_h^{qF}(\v)}{A_F} + h^{\frac{1}{2}} \normregh1{\v - \pi_h^{qF}(\v)}{A_F})  \normregl2{\jump{\ph}}{F} \\
                                                                                  & \lesssim  \sum_{F \in \agints^-} \normregh1{\v}{{\omega(A_F)}} h^{\frac{1}{2}} \normregl2{\jump{\ph}}{F} \lesssim \alpha \normregh1{\v}{\Dom}^2 + \frac{1}{\alpha} \sum_{F \in \agints^-} h \normregl2{\jump{\ph}}{F}^2 \\
  & \lesssim \alpha \normregl2{\ph}{\Dom}^2 + \frac{1}{\alpha}\sum_{F \in \agints^-} h \normregl2{\jump{\ph}}{F}^2,  \label{eq:inf_sup_con_pre_bou2}
\end{align}
for any $\alpha > 0$. Combining \eqref{eq:con_inf_sup_con_pre}, \eqref{eq:inf_sup_con_pre_bou1}, and \eqref{eq:inf_sup_con_pre_bou2} with $\alpha$ large enough, we readily get:
\begin{align}
  b(\pi_h^{qF}(\v),\ph) \geq \frac{1}{\beta}  \normregl2{\ph}{\Dom}^2 - C \alpha \normregl2{\ph}{\Dom}^2 - \frac{C}{\alpha}\sum_{F \in \agints^-} h \normregl2{\jump{\ph}}{F}^2 \geq \frac{1}{\beta_0}  \normregl2{\ph}{\Dom}^2 - \frac{C}{\alpha}\sum_{F \in \agints^-} h \normregl2{\jump{\ph}}{F}^2,  \label{eq:inf_sup_con_pre_bou3}
\end{align}
for $\beta_0 > 0$. It proves the lemma.
\end{proof}
Let us define the $L^2$ interpolant for extended discontinuous Lagrangian spaces as follows. Given $\ph \in \Pqhd$ and $0 \leq r < q$, $\pi_h^{-,r}(\ph) \in \Pqhd$ is such that 
\begin{align}
\ipreg{\pi_h^{-,r}(\ph)}{\qh}{A} = 
\ipreg{\ph}{\qh}{A},  \qquad \forall \qh \in \Pordhd{r}.
\end{align}
\begin{theorem} \label{th:inf_sup_hig_ord}
Let us assume that there exists a $q \in \mathbb{Z}^+$, $q \geq 2$,  such that $\Qbq(A) \subset \Vh(A)$ and that the mixed \ac{fe} space $\Vh \times \Pordhd{0} \cap L_0^2(\Dom)$ satisfies the inf-sup condition \eqref{eq:inf_sup_con_pre} in Lem. \ref{lm:inf_sup_con_pre}. Then, for any $p_h \in \Pordhd{q-1}$, there exists a $\vh \in \Vh$ such that:
\begin{align}\label{eq:inf_sup_hig_ord}
  \frac{1}{\beta_q}\normregl2{\ph}{\Dom}^2 \leq
  \bh{\vh}{\ph} +   \sum_{F \in \agints^-}h \normregl2{\jump{\ph}}{F}^2 -  \sum_{A \in \meshag^-} \normregl2{\ph + \pi_h^{-,0}(\ph)}{A}^2 , \qquad
  \tnorm{\vh} \leq \normregl2{\ph},
\end{align}
for a positive constant $\beta_q$.
\end{theorem}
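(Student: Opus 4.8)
The plan is to control the piecewise-constant (per aggregate) part of $\ph$ through Lemma~\ref{lm:inf_sup_con_pre} and the aggregate-wise zero-mean remainder through the aggregate bubbles of Lemma~\ref{lm:bub_sta}, and then to glue the two velocity fields together. Write $\ph = \pi_h^{-,0}(\ph) + \tilde p_h$ with $\tilde p_h \doteq \ph - \pi_h^{-,0}(\ph) \in \Pordhd{q-1}$. Since the local $L^2$-projection onto constants preserves the integral over each aggregate, $\pi_h^{-,0}(\ph) \in \Pordhd{0}\cap L^2_0(\Dom)$, and because the two summands are $L^2$-orthogonal aggregate by aggregate,
\begin{equation*}
\normregl2{\ph}{\Dom}^2 = \normregl2{\pi_h^{-,0}(\ph)}{\Dom}^2 + \sum_{A\in\meshag^+}\normregl2{\ph-\pi_h^{-,0}(\ph)}{A}^2 + \sum_{A\in\meshag^-}\normregl2{\ph-\pi_h^{-,0}(\ph)}{A}^2 ,
\end{equation*}
and the last sum is exactly the quantity that the $\meshag^-$-term in \eqref{eq:inf_sup_hig_ord} accounts for.

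First I would apply Lemma~\ref{lm:inf_sup_con_pre} to $\pi_h^{-,0}(\ph)$ — legitimate because the hypothesis grants the inf-sup condition \eqref{eq:inf_sup_con_pre} for the pair $\Vh\times(\Pordhd{0}\cap L^2_0(\Dom))$ — obtaining $\vh^0\in\Vh$ with $\bh{\vh^0}{\pi_h^{-,0}(\ph)} \geq \tfrac{1}{\beta_0}\normregl2{\pi_h^{-,0}(\ph)}{\Dom}^2 - \sum_{F\in\agints^-}h\normregl2{\jump{\pi_h^{-,0}(\ph)}}{F}^2$ and $\tnorm{\vh^0}\leq\normregl2{\pi_h^{-,0}(\ph)}{\Dom}$. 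Since $q\geq 2$ implies $\Qbord{2}(A)\subset\Vh(A)$, Lemma~\ref{lm:bub_sta} supplies $\vph{\ph}\in\Vh$ with the two bounds in \eqref{eq:bub_sta}; integrating by parts aggregate by aggregate (using that $\vph{\ph}$ vanishes on aggregate boundaries, and that restrictions of the aggregated pressure space to aggregates are single polynomials) gives $\bh{\vph{\ph}}{\qh}=\sum_{A\in\meshag^+}\ipreg{\vph{\ph}}{\grad \qh}{A}$, so $\vph{\ph}$ only sees $\grad\ph$ and in particular $\bh{\vph{\ph}}{\pi_h^{-,0}(\ph)}=0$. I then take $\vh \doteq \vh^0 + \delta\,\vph{\ph}$ for a parameter $\delta>0$ to be fixed, expand $\bh{\vh}{\ph} = \bh{\vh^0}{\pi_h^{-,0}(\ph)} + \bh{\vh^0}{\tilde p_h} + \delta\,\bh{\vph{\ph}}{\ph}$, bound the cross term by continuity of $b_h$ in \eqref{eq:ah_bh_sta_con}, $\bh{\vh^0}{\tilde p_h}\geq-\xi_b\tnorm{\vh^0}\normregl2{\tilde p_h}{\Dom}\geq-\xi_b\normregl2{\pi_h^{-,0}(\ph)}{\Dom}\normregl2{\tilde p_h}{\Dom}$, and absorb it by Young's inequality into a small fraction of $\normregl2{\pi_h^{-,0}(\ph)}{\Dom}^2$ plus a multiple of $\normregl2{\tilde p_h}{\Dom}^2$.

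The main obstacle is reconciling the penalty produced by Lemma~\ref{lm:inf_sup_con_pre}, which is written in terms of $\jump{\pi_h^{-,0}(\ph)}$, with the $\jump{\ph}$ penalty in the statement, and then checking that everything not absorbed by $\vh^0$ or $\delta\vph{\ph}$ genuinely lives on improper interfaces or improper aggregates. I would split $\jump{\pi_h^{-,0}(\ph)} = \jump{\ph} - \jump{\tilde p_h}$ and, on each $F = F_{AB}\in\agints^-$, control $h\normregl2{\jump{\tilde p_h}}{F}^2 \lesssim \normregl2{\tilde p_h}{A}^2 + \normregl2{\tilde p_h}{B}^2$ by a discrete trace/inverse estimate (the inequalities \eqref{eq:inv_ine1} and \eqref{eq:tra_ine2} rescaled to the aggregates, whose size is $\lesssim\gamma h$, plus equivalence of norms in finite dimension); since each aggregate has an $h$-independent number of neighbours, summing over $\agints^-$ yields $\sum_{F\in\agints^-}h\normregl2{\jump{\tilde p_h}}{F}^2 \lesssim \sum_{A\in\meshag^+}\normregl2{\ph-\pi_h^{-,0}(\ph)}{A}^2 + \sum_{A\in\meshag^-}\normregl2{\ph-\pi_h^{-,0}(\ph)}{A}^2$. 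The proper-aggregate part of this bound, together with the Young residual of the cross term, is absorbed by choosing $\delta$ large enough that $\delta/\beta_0'$ dominates their sum; the improper-aggregate part, together with the orthogonal remainder $\sum_{A\in\meshag^-}\normregl2{\ph-\pi_h^{-,0}(\ph)}{A}^2$ arising when passing from $\normregl2{\pi_h^{-,0}(\ph)}{\Dom}^2 + \sum_{\meshag^+}(\cdots)$ to $\normregl2{\ph}{\Dom}^2$, is exactly what is gathered into the $\meshag^-$-sum of \eqref{eq:inf_sup_hig_ord}. This leaves a bound of the form $\bh{\vh}{\ph} \gtrsim \normregl2{\ph}{\Dom}^2 - \sum_{F\in\agints^-}h\normregl2{\jump{\ph}}{F}^2 - \sum_{A\in\meshag^-}\normregl2{\ph-\pi_h^{-,0}(\ph)}{A}^2$ with $\tnorm{\vh}\lesssim\normregl2{\ph}{\Dom}$; rescaling $\vh$ by $(1+\delta)^{-1}$ (and harmless constants) to normalize its $\tnorm{\cdot}$-norm fixes $\beta_q$ and the penalty weights and gives \eqref{eq:inf_sup_hig_ord}. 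I expect the bookkeeping in this last stage — making sure no uncontrolled oscillation escapes onto a proper aggregate — to be the delicate point, while the scaling and Young's-inequality manipulations are routine.
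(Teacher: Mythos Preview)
Your proposal is correct and follows essentially the same route as the paper: split $\ph$ into its aggregate-wise mean and zero-mean parts, control the former via Lemma~\ref{lm:inf_sup_con_pre} and the latter via the bubble construction of Lemma~\ref{lm:bub_sta}, bound the cross term $b_h(\vh^0,\tilde p_h)$ by continuity and Young, and convert the $\jump{\pi_h^{-,0}(\ph)}$ penalty to a $\jump{\ph}$ penalty via the trace/inverse estimate, shoving the leftover $\|\tilde p_h\|_A^2$ contributions to proper aggregates into the bubble term and those on improper aggregates into the $\meshag^-$-sum of the statement. The only cosmetic difference is that the paper writes the combined test function as $\vph{\ph}+\alpha'\vh$ with $\alpha'$ small, whereas you write $\vh^0+\delta\,\vph{\ph}$ with $\delta$ large; after the final rescaling these are the same argument.
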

\begin{proof}
  Let us decompose $b_h(\vh,\ph)$ as follows: 
\begin{align}
  b_h(\vh,\ph) &= b_h(\vh,\pi_h^{-,0}(\ph)) + b_h(\vh,\pi_h^{-,0}(\ph) - \ph). \label{eq:inf_sup_hig_ord_bou1}
\end{align}
Since $\Vh \times ( \Pordhd{0} \cap L_0^2(\Dom))$ is weakly inf-sup stable by the statement of the theorem, i.e., it satisfies \eqref{eq:inf_sup_con_pre},  there exists a function $\vh$ such that
\begin{align}\label{eq:inf_sup_q0}
\frac{1}{\beta_0}\normregl2{\pi_h^{-,0}(\ph)}{\Dom}^2 \leq
  \bh{\vh}{\pi_h^{-,0}(\ph)} +  \sum_{F \in \agints^-}h \normregl2{\jump{\pi_h^{-,0}(\ph)}}{F}^2, \qquad \tnorm{\vh} \leq \normregl2{\pi_h^{-,0}(\ph)}{\Dom}.
\end{align}
Using the trace inequality \eqref{eq:tra_ine}, the inverse inequality \eqref{eq:inv_ine1}, the stability of $\vh$ in the weak inf-sup condition \eqref{eq:inf_sup_q0}, and Young's and Cauchy-Schwarz inequalities, the first term in \eqref{eq:inf_sup_hig_ord_bou1} can be bounded as follows:
\begin{align}
  b_h(\vh,\pi_h^{-,0}(\ph) - \ph)  & \lesssim \normregh1{\vh}{\Dom} \normregl2{\ph - \pi_h^{-,0}(\ph)}{\Dom} + h^{-\frac{1}{2}} \normregl2{\vh}{\bou} h^{\frac{1}{2}} \normregl2{\ph - \pi_h^{-,0}(\ph)}{\bou}  \\ & \lesssim \tnorm{\vh}  \normregl2{\ph - \pi_h^{-,0}(\ph)}{\Dom} \\ & \lesssim \alpha \normregl2{ \pi_h^{-,0}(\ph) }{\Dom}^2  + \frac{1}{\alpha} \normregl2{\ph + \pi_h^{-,0}(\ph)}{\Dom}^2.  \label{eq:inf_sup_hig_ord_bou2}
\end{align}
Combining \eqref{eq:inf_sup_hig_ord_bou1}, \eqref{eq:inf_sup_q0}, and \eqref{eq:inf_sup_hig_ord_bou2} for $\alpha$ small enough, we get:
\begin{align}
  \frac{1}{\beta^*_0} \normregl2{\pi_h^{-,0}(\ph)}{\Dom}^2
  \leq b_h(\vh,\ph) + \sum_{F \in \agints^-}h \normregl2{\jump{\pi_h^{-,0}(\ph)}}{F}^2 + C \normregl2{\ph - \pi_h^{-,0}(\ph)}{\Dom}^2,
             \label{eq:inf_sup_hig_ord_bou3}
\end{align}
where $\beta_0^* > 0$. On the other hand,  we have that $\grad \ph \in \Pbordhd{q-2}$ and $\vph{\ph} \in \Qbq(A) \cap \H_0^1(A) \subset \Vh$ for any $A \in \agints^+$. Thus, combining the first inequality in \eqref{eq:bub_sta} from  Lem. \ref{lm:bub_sta} with \eqref{eq:inf_sup_hig_ord_bou3}, we obtain,
for an arbitrary positive constant $\alpha'$:
\begin{align}
   \bh{\vph{\ph} + \alpha' \vh}{\ph}   \geq &
  \frac{1}{\beta'_0}\sum_{A \in \meshag^+} \normregl2{\ph - \pi_h^{-,0}(\ph)}{A}^2 + \frac{\alpha'}{\beta^*_0}\normregl2{\pi_h^{-,0}(\ph)}{\Dom}^2 \\ &-  \alpha' \sum_{F \in \agints^-}h \normregl2{\jump{\pi_h^{-,0}(\ph)}}{F}^2- \alpha' C \normregl2{\ph - \pi_h^{-,0}(\ph)}{\Dom}^2 \\
  \geq & \frac{1}{\beta'_0}\sum_{A \in \meshag^+} \normregl2{\ph - \pi_h^{-,0}(\ph)}{A}^2 + \frac{\alpha'}{\beta^*_0}\normregl2{\pi_h^{-,0}(\ph)}{\Dom}^2 -  \alpha' \sum_{F \in \agints^-}h \normregl2{\jump{\pi_h^{-,0}(\ph)}}{F}^2 \\ & - \alpha' C \sum_{A \in \meshag^+} \normregl2{\ph - \pi_h^{-,0}(\ph)}{A}^2 - \alpha' C \sum_{A \in \meshag^-} \normregl2{\ph - \pi_h^{-,0}(\ph)}{A}^2 \\
  \geq & \frac{1 - \alpha' C \beta_0'}{\beta'_0}\sum_{A \in \meshag^+} \normregl2{\ph - \pi_h^{-,0}(\ph)}{A}^2 + \frac{\alpha'}{\beta^*_0}\normregl2{\pi_h^{-,0}(\ph)}{\Dom}^2 \\ &  -  \alpha' \sum_{F \in \agints^-}h \normregl2{\jump{\pi_h^{-,0}(\ph)}}{F}^2 - \alpha' C \sum_{A \in \meshag^-} \normregl2{\ph - \pi_h^{-,0}(\ph)}{A}^2 \\
  \geq & \frac{1 - \alpha' C\beta'_0}{\beta'_0}\sum_{A \in \meshag} \normregl2{\ph - \pi_h^{-,0}(\ph)}{A}^2 + \frac{\alpha'}{\beta^*_0}\normregl2{\pi_h^{-,0}(\ph)}{\Dom}^2 \\ &  -  \alpha' \sum_{F \in \agints^-}h \normregl2{\jump{\pi_h^{-,0}(\ph)}}{F}^2 - \frac{1 - 2\alpha' C \beta_0'}{\beta_0'} \sum_{A \in \meshag^-} \normregl2{\ph - \pi_h^{-,0}(\ph)}{A}^2. \label{eq:inf_sup_hig_ord_bou5}
\end{align}
Furthermore, using the fact that $\normregl2{\pi_h^{-,0}(\ph)}{\Dom} \leq \normregl2{\ph}{\Dom}$, the stability in \eqref{eq:bub_sta}, and the triangle inequality, we get:
\begin{align}
\tnorm{\vph{\ph} + \alpha' \vh}^2 \lesssim 
  \tnorm{\vph{\ph}}^2 + \tnorm{\alpha' \vh}^2 \leq  \sum_{A \in \meshag^+} \normregl2{\ph - \pi_h^{-,0}(\ph)}{A}^2
  + \normregl2{\alpha' \pi_h^{-,0}(\ph)}{\Dom}^2 \lesssim \normregl2{\ph}{\Dom}^2.\label{eq:inf_sup_hig_ord_bou6}
\end{align}
On the other hand, the trace inequality \eqref{eq:tra_ine2} and the triangle inequality yield:
\begin{align}
  \sum_{F \in \agints^-}h \normregl2{\jump{\pi_h^{-,0}(\ph)}}{F}^2 & \leq \sum_{F \in \agints^-}h \normregl2{\jump{\ph - \pi_h^{-,0}(\ph)}}{F}^2 + \sum_{F \in \agints^-}h \normregl2{\jump{\ph}}{F}^2 \\ & \lesssim \sum_{A \in \meshag} \normregl2{\ph - \pi_h^{-,0}(\ph)}{A}^2  + \sum_{F \in \agints^-}h \normregl2{\jump{\ph}}{F}^2. \label{eq:inf_sup_hig_ord_bou7} 
\end{align}
Bounds \eqref{eq:inf_sup_hig_ord_bou5}-\eqref{eq:inf_sup_hig_ord_bou7}  yield \eqref{eq:inf_sup_hig_ord} for $\alpha'$ small enough, after a proper scaling of $\vph{\ph} + \alpha' \vh$.  
\end{proof}

\begin{assumption}[Pressure stabilization] \label{ass:pre_sta}
  For a mixed \ac{fe} space $\Vh \times \Qh$, we consider a pressure stabilization that is positive semidefinite and holds: 
  \begin{align}
    \frac{1}{\gamma_j} j_h(\uh,\ph,\uh,\ph) &\geq  \sum_{A \in \meshag^-} \normregl2{\ph - \pi_h^{-,0}(\ph)}{A}^2 +  \sum_{F \in \agints^-}h \normregl2{\jump{\ph}}{F}^2 - \frac{\gamma_a}{2 \gamma_j} \tnorm{\uh}^2, \label{eq:pre_sta1} \\ j_h(\uh,p_h,\vh,q_h) & \leq \xi_j \tnorm{\uh,\ph} \tnorm{\vh,q_h},\label{eq:pre_sta2}
  \end{align}
  for any $(\uh,\ph), \ (\vh,\qh) \in \Vh \times \Qh$.
\end{assumption}

\begin{theorem} \label{th:fin_inf_sup}
Let us assume that the mixed \ac{fe} space  $\Vh \times \Qh$ satisfies the inf-sup condition \eqref{eq:inf_sup_hig_ord} and that the pressure stabilization $j_h$ satisfies Ass. \ref{ass:pre_sta}. It holds:
\begin{align}\label{eq:fin_inf_sup}
  \frac{1}{\beta_d} \tnorm{\uh,\ph} \leq
   \sup_{(\vh,\qh) \in \Vh \times \Qh} \frac{A_h(\uh, \ph, \vh, \qh)}{\tnorm{\vh,\qh}},
\end{align}
for a positive constant $\beta_d$.
\end{theorem}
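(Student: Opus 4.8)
The plan is the standard Babuška--Brezzi combination. Fix an arbitrary pair $(\uh,\ph) \in \Vh \times \Qh$; I will construct a test pair $(\vh,\qh) \in \Vh \times \Qh$ with $\Ah{\uh}{\ph}{\vh}{\qh} \gtrsim \tnorm{\uh,\ph}^2$ and $\tnorm{\vh,\qh} \lesssim \tnorm{\uh,\ph}$, so that dividing the first inequality by $\tnorm{\vh,\qh}$ and passing to the supremum yields \eqref{eq:fin_inf_sup}. The test pair is $(\vh,\qh) \doteq (\uh + \delta \wh,\, -\ph)$, where $\wh \in \Vh$ is the velocity associated with $\ph$ by the weak inf-sup condition \eqref{eq:inf_sup_hig_ord} of Th.~\ref{th:inf_sup_hig_ord} (so that $\tnorm{\wh} \le \normregl2{\ph}{\Dom}$ and $\bh{\wh}{\ph}$ controls $\normregl2{\ph}{\Dom}^2$ modulo the improper facet jumps and improper aggregate oscillations), and $\delta>0$ is a small parameter to be fixed at the end. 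With this choice the two mixed terms of $A_h$ telescope, $\bh{\uh + \delta\wh}{\ph} + \bh{\uh}{-\ph} = \delta\,\bh{\wh}{\ph}$, and using \eqref{eq:sto_ope} and \eqref{eq:bh_for},
\[\Ah{\uh}{\ph}{\uh + \delta \wh}{-\ph} = \ah{\uh}{\uh} + \delta\,\ah{\uh}{\wh} + \delta\,\bh{\wh}{\ph} - \ch{\uh}{\ph}{\uh}{-\ph} - \delta\,\ch{\uh}{\ph}{\wh}{-\ph}.\]

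For the first three summands I would use, in order, the coercivity of $a_h$ in \eqref{eq:ah_bh_sta_con}, its continuity together with $\tnorm{\wh} \le \normregl2{\ph}{\Dom}$ and Young's inequality, and the weak inf-sup bound \eqref{eq:inf_sup_hig_ord}, obtaining $\ah{\uh}{\uh} \ge \gamma_a \tnorm{\uh}^2$, $\delta\,\ah{\uh}{\wh} \ge -\tfrac{\gamma_a}{8}\tnorm{\uh}^2 - C\delta^2\normregl2{\ph}{\Dom}^2$, and
\[\delta\,\bh{\wh}{\ph} \ge \tfrac{\delta}{\beta_q}\normregl2{\ph}{\Dom}^2 - \delta\sum_{F \in \agints^-} h\,\normregl2{\jump{\ph}}{F}^2 - \delta\sum_{A \in \meshag^-}\normregl2{\ph - \pi_h^{-,0}(\ph)}{A}^2.\]

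The delicate part is $-\ch{\uh}{\ph}{\uh}{-\ph} - \delta\,\ch{\uh}{\ph}{\wh}{-\ph}$, which must ultimately produce a genuinely \emph{positive} multiple of $\ch{\uh}{\ph}{\uh}{\ph}$ so that the improper facet and aggregate contributions above can be absorbed via Asm.~\ref{ass:pre_sta}. Using that $j_h$ is symmetric and positive semidefinite I would write $\ch{\uh}{\ph}{\uh}{-\ph} = \ch{\bs{0}}{\ph}{\bs{0}}{\ph} - \ch{\uh}{\bs{0}}{\uh}{\bs{0}}$ (the cross contributions cancel by symmetry); the triangle inequality for the seminorm induced by $j_h$ together with the continuity bound \eqref{eq:pre_sta2} then give $\ch{\bs{0}}{\ph}{\bs{0}}{\ph} \ge \tfrac12 \ch{\uh}{\ph}{\uh}{\ph} - \xi_j\tnorm{\uh}^2$ and $\ch{\uh}{\bs{0}}{\uh}{\bs{0}} \le \xi_j\tnorm{\uh}^2$, hence $-\ch{\uh}{\ph}{\uh}{-\ph} \ge \tfrac12 \ch{\uh}{\ph}{\uh}{\ph} - 2\xi_j\tnorm{\uh}^2$. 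The remaining $\delta$-term is bounded by the Cauchy--Schwarz inequality for $j_h$, by \eqref{eq:pre_sta2}, and by $\tnorm{\wh} \le \normregl2{\ph}{\Dom}$, giving $\delta\,|\ch{\uh}{\ph}{\wh}{-\ph}| \le \tfrac14\ch{\uh}{\ph}{\uh}{\ph} + C\delta^2\normregl2{\ph}{\Dom}^2$. Finally I would invoke \eqref{eq:pre_sta1} in the form $\ch{\uh}{\ph}{\uh}{\ph} \ge \gamma_j\big(\sum_{A\in\meshag^-}\normregl2{\ph-\pi_h^{-,0}(\ph)}{A}^2 + \sum_{F\in\agints^-} h\,\normregl2{\jump{\ph}}{F}^2\big) - \tfrac{\gamma_a}{2}\tnorm{\uh}^2$, so that for $\delta \le \tfrac14\gamma_j$ the surviving $\tfrac14\ch{\uh}{\ph}{\uh}{\ph}$ dominates $\delta$ times the two improper sums, at the cost of a further $-\tfrac{\gamma_a}{8}\tnorm{\uh}^2$.

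Collecting all contributions leaves a lower bound $c_1\tnorm{\uh}^2 + c_2\normregl2{\ph}{\Dom}^2$ plus a nonnegative stabilization remainder, where $c_1$ is a fixed positive multiple of $\gamma_a$ minus fixed multiples of $\xi_j$ and of $C\delta$, and $c_2 = \tfrac{\delta}{\beta_q} - C\delta^2$; thus $c_2>0$ as soon as $\delta$ is small, and $c_1>0$ provided $\pen$ is taken large enough that the coercivity constant $\gamma_a$ dominates the accumulated negative multiples of $\tnorm{\uh}^2$ — this is precisely the role of the factor $\tfrac{\gamma_a}{2\gamma_j}$ calibrated into \eqref{eq:pre_sta1}. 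The companion estimate $\tnorm{\uh + \delta\wh,\, -\ph}^2 = \tnorm{\uh + \delta\wh}^2 + \normregl2{\ph}{\Dom}^2 \lesssim \tnorm{\uh}^2 + \delta^2\tnorm{\wh}^2 + \normregl2{\ph}{\Dom}^2 \lesssim \tnorm{\uh,\ph}^2$ is immediate from $\tnorm{\wh} \le \normregl2{\ph}{\Dom}$. Dividing and taking the supremum gives \eqref{eq:fin_inf_sup}, with $\beta_d$ depending only on $\gamma_a,\xi_a,\xi_j,\gamma_j,\beta_q$. The main obstacle is the constant bookkeeping — fixing $\pen$ large first and $\delta$ small afterwards — together with the manipulation of $j_h$ at mixed-sign arguments, where symmetry and positive semidefiniteness are what convert $-\ch{\uh}{\ph}{\uh}{-\ph}$ into a positive multiple of $\ch{\uh}{\ph}{\uh}{\ph}$ up to a controllable $\tnorm{\uh}^2$ term; everything else (telescoping $b_h$, coercivity of $a_h$, Young's and Cauchy--Schwarz inequalities) is routine.
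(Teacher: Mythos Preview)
Your strategy is exactly the paper's: test with $(\uh+\delta\wh,-\ph)$ where $\wh$ is the velocity supplied by the weak inf-sup \eqref{eq:inf_sup_hig_ord}, use coercivity of $a_h$, continuity plus Young for the cross term $a_h(\uh,\wh)$, Cauchy--Schwarz for the $j_h$ cross term, and then invoke \eqref{eq:pre_sta1} to absorb the improper-facet and improper-aggregate residuals. The telescoping of $b_h$ and the final norm bound on the test pair are also identical.

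There is, however, a genuine gap in the constant bookkeeping that the paper's argument does not incur. Your manipulation of the mixed-sign stabilization term produces
\[
-\,j_h(\uh,\ph,\uh,-\ph)\;\ge\;\tfrac12\,j_h(\uh,\ph,\uh,\ph)\;-\;2\xi_j\,\tnorm{\uh}^2,
\]
and this $-2\xi_j\tnorm{\uh}^2$ must then be beaten by the coercivity contribution $\gamma_a\tnorm{\uh}^2$. You propose to guarantee $c_1>0$ by ``taking $\pen$ large enough that $\gamma_a$ dominates'', but this is not available: from the structure of the Nitsche form \eqref{eq:ah_for} one has $a_h(\uh,\uh)\le \|\grad\uh\|_{\Dom}^2 + \tau\|h^{-1/2}\uh\|_{\bou}^2$ up to the symmetric boundary term, so the coefficient of $\|\grad\uh\|_{\Dom}^2$ in any coercivity bound is at most $1$; hence $\gamma_a\le 1$ no matter how large $\pen$ is. Assumption~\ref{ass:pre_sta} places no upper bound on $\xi_j$, so under the stated hypotheses you cannot force $\tfrac34\gamma_a-2\xi_j>0$. (The calibration $\tfrac{\gamma_a}{2\gamma_j}$ in \eqref{eq:pre_sta1} is tuned to absorb the $-\tfrac{\gamma_a}{2}\tnorm{\uh}^2$ coming from \eqref{eq:pre_sta1} itself, not this additional $-2\xi_j\tnorm{\uh}^2$.) The paper avoids this term altogether by writing the diagonal contribution directly as $a_h(\uh,\uh)+j_h(\uh,\ph,\uh,\ph)$ (see \eqref{eq:fin_inf_sup_bou1}), so that the full $j_h(\uh,\ph,\uh,\ph)$ with coefficient $1$ --- not $\tfrac12$ minus a $\xi_j$-loss --- is available before combining with $(\vh,0)$; the only negative multiple of $\tnorm{\uh}^2$ that appears comes from the Young split of $a_h(\uh,\vh)$ and carries the small factor $\alpha'$, which is what makes the final balance close without any relation between $\gamma_a$ and $\xi_j$.

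A minor point: your linear split of the $j_h$ term should read $-j_h(\uh,\ph,\uh,-\ph)-\delta\,j_h(\uh,\ph,\wh,0)$, not $-\delta\,j_h(\uh,\ph,\wh,-\ph)$, since $(\uh+\delta\wh,-\ph)=(\uh,-\ph)+\delta(\wh,0)$; and the intermediate identity you quote has a sign slip (it is $j_h(\uh,\ph,\uh,-\ph)=j_h(\uh,\bs 0,\uh,\bs 0)-j_h(\bs 0,\ph,\bs 0,\ph)$), although the final inequality you draw from it is correct.
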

\begin{proof}
  First, we take as test function $(\uh,-\ph)$. Using the first inequality in \eqref{eq:ah_bh_sta_con}, we get: 
\begin{align}\label{eq:fin_inf_sup_bou1}
  A_h(\uh, \ph, \uh, \ph) & =  a_h(\uh,\uh) + j_h(\uh,\ph,\uh,\ph)  \geq {\gamma_a} \tnorm{\uh}^2 + j_h(\uh,\ph,\uh,\ph).
\end{align}
Next, taking as test function $(\vh,0)$, where $\vh$ satisfies the weak inf-sup \eqref{eq:inf_sup_hig_ord} in Th. \ref{th:inf_sup_hig_ord}, we get:
\begin{align}
  A_h(\uh, \ph, \vh, 0)  = &  a_h(\uh,\vh) + b_h(\vh,\ph) - j_h(\uh,\ph,\vh,0) \\
                          \geq & \frac{1}{\beta_q}\normregl2{\ph}{\Dom}^2 -  \sum_{F \in \agints^-}h \normregl2{\jump{\ph}}{F}^2 -  \sum_{A \in \meshag^-} \normregl2{\ph - \pi_h^{-,0}\ph}{A}^2 \\ & + a_h(\uh,\vh) - j_h(\uh,\ph,\vh,0).   \label{eq:fin_inf_sup_bou2}   
\end{align}
On one side, the second inequality in \eqref{eq:ah_bh_sta_con} together with Young's and Cauchy-Schwarz inequalities yield:
\begin{align}\label{eq:fin_inf_sup_bou21}
   a_h(\uh,\vh) \leq \frac{4\xi_a^2}{\alpha} \tnorm{\uh}^2 +  \alpha \tnorm{\vh}^2 \leq \frac{4\xi_a^2}{\alpha}  \tnorm{\uh}^2 + \alpha \normregl2{\ph}{\Dom}^2,                       
\end{align}
for an arbitrary constant $\alpha$. 
On the other side, using the fact that the pressure stabilization is positive semidefinite, Cauchy-Schwarz and Young's inequalities, the continuity in \eqref{eq:pre_sta2}, and the stability for $\vh$ in \eqref{eq:inf_sup_hig_ord}, we get:
\begin{align}
j_h(\uh,\ph,\vh,0) \leq & \frac{4}{\alpha} j_h(\uh,\ph,\uh,\ph) + \alpha j_h(\vh,0,\vh,0)  \nonumber \\ 
\leq & \frac{4}{\alpha} j_h(\uh,\ph,\uh,\ph) + \alpha \xi_j \tnorm{\vh}^2 \leq \frac{4}{\alpha} j_h(\uh,\ph,\uh,\ph) + \alpha \xi_j\normregl2{\ph}{\Dom}^2. \label{eq:fin_inf_sup_bou22}
\end{align}
As a result, combining \eqref{eq:fin_inf_sup_bou2}-\eqref{eq:fin_inf_sup_bou22}, and taking $\alpha$ small enough, we obtain:
\begin{align}
  A_h(\uh, \ph, \vh, 0)  \gtrsim & \normregl2{\ph}{\Dom}^2 -  \sum_{F \in \agints^-}h \normregl2{\jump{\ph}}{F}^2 -  \sum_{A \in \meshag^-} \normregl2{\ph - \pi_h^{-,0}(\ph)}{A}^2 \nonumber \\ 
& - j_h(\uh,\ph,\uh,\ph) -  \tnorm{\uh}^2.  \label{eq:fin_inf_sup_bou23}                     
\end{align}
By taking $(\uh + \alpha' \vh,\ph)$ as a test function with $\alpha'$ small enough, using \eqref{eq:fin_inf_sup_bou22}, \eqref{eq:fin_inf_sup_bou23}, and the assumption over the pressure stability in \eqref{eq:pre_sta1}, we finally get:
\begin{align}\label{eq:fin_inf_sup_bou3}
  A_h(\uh, \ph, \uh + \alpha' \vh, \ph)  \gtrsim & \tnorm{\uh}^2 + \alpha' C \normregl2{\ph}{\Dom}^2 + j_h(\uh,\ph,\uh,\ph) \\ & - \alpha' C \sum_{F \in \agints^-}h \normregl2{\jump{\ph}}{F}^2 - \alpha' C \sum_{A \in \meshag^-} \normregl2{\ph - \pi_h^{-,0}(\ph)}{A}^2 \\ \gtrsim &\tnorm{\uh}^2 + \normregl2{\ph}{\Dom}^2 + j_h(\uh,\ph,\uh,\ph).  
\end{align}
On the other hand, the stability for $\vh$ in \eqref{eq:inf_sup_hig_ord} and the triangle inequality yield:
\begin{align}\label{eq:fin_inf_sup_bou4}
\tnorm{ \u_h+\alpha' \vh, \ph } \lesssim \tnorm{ \u_h, \ph } + \tnorm{ \alpha' \vh } \lesssim \tnorm{ \u_h, \ph } + \normregl2{\alpha' \ph }{\Dom}  \lesssim \tnorm{ \u_h, \ph }.
\end{align}
It proves the theorem.
\end{proof}

\subsection{Stable mixed \acp{fe} and pressure stabilization} \label{ssec:sta_mix_fe_pre_sta}
We propose below two different algorithms that satisfy  Ass. \ref{ass:pre_sta} and thus, the stability results in Th. \ref{th:fin_inf_sup}.
\begin{method} \label{alg:mix_lag}
  We consider a hex mesh, the velocity space $\Vh \doteq \Qbqh$, and the pressure space $\Qh \doteq \Pordhd{q-1}$ for an integer $ q \geq 2$. On the other hand, for any facet $F \in \agints$, and the two aggregates $A_F, B_F \in \meshag$, we include the facet in the subset of facets $\agints^*$ if $A_F$ or $B_F$ belong to the set $\meshag \setminus \meshag \cap \meshact$. On the other hand, we define the set of aggregates to be stabilized as $\meshag^* \doteq \meshag \setminus \meshag \cap \meshact$. The pressure stabilization term is taken as:
  \begin{align}
    j_h(\uh,\ph,\vh,\qh) & \doteq \sum_{F \in \agints^*} \tau_{j1} h \ipreg{\jump{\ph}}{\jump{\qh}}{F} +  \sum_{A \in \meshag^*} \tau_{j2} h^2 \ipreg{-\Delta \uh + \grad \ph}{-\Delta \vh + \grad \qh}{A}, \\
    g_h(\f,\vh) & \doteq \sum_{A \in \meshag^*} h^2 \ipreg{\f}{-\Delta \vh + \grad \qh}{A}, \label{eq:mix_lag_sta}
    \end{align}
for positive algorithmic constants $\tau_{j1}$ and $\tau_{j2}$.
\end{method} 

The \ac{agfe} space thus relies on the popular \ac{fe} space $\Qbqh \times \Pordhd{q-1}$ for the interior cells. The velocity field is extended to cut cells by the standard extension operator in Sect. \ref{ssec:ser_ext}, and the discontinuous pressure field is extended by the standard (discontinuous) one. This choice has been motivated by the proof of the abstract discrete inf-sup condition.
\begin{theorem}\label{th:mix_lag}
The method proposed in Alg.  \ref{alg:mix_lag} has a pressure stabilization term that satisfies Ass. \ref{ass:pre_sta} and thus, it satisfies Th. \ref{th:fin_inf_sup}. As a result, the discrete problem \eqref{eq:dis_sto} is well-posed for $\f \in \L^2(\Omega)$. 
  \end{theorem}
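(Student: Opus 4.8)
The plan is to reduce Theorem~\ref{th:mix_lag} to the two hypotheses of Theorem~\ref{th:fin_inf_sup}, namely the weak inf-sup condition \eqref{eq:inf_sup_hig_ord} for the pair $\Vh\times\Qh=\Qbqh\times\Pordhd{q-1}$ and the pressure-stabilization Assumption~\ref{ass:pre_sta} for the term $j_h$ of Algorithm~\ref{alg:mix_lag}. Once these are in place, \eqref{eq:fin_inf_sup} holds, and since $\Vh\times\Qh$ is finite-dimensional and ``square'' that inf-sup inequality forces $A_h$ to be injective, hence bijective; as $L_h(\cdot,\cdot)$ is bounded with respect to $\tnorm{\cdot}$ whenever $\f\in\L^2(\Dom)$ — using Cauchy--Schwarz together with a Poincar\'e--Friedrichs inequality with boundary term for $\ip{\f}{\vh}$, and the cellwise inverse inequalities \eqref{eq:inv_ine1} for $g_h$ — problem \eqref{eq:dis_sto} then has a unique (and stable) solution. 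So the whole statement comes down to checking \eqref{eq:inf_sup_hig_ord} and Assumption~\ref{ass:pre_sta}.

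The weak inf-sup condition \eqref{eq:inf_sup_hig_ord} is the easy part. Since $q\ge 2$ and $\Vh=\Qbqh$ we have $\Qbq(A)\subset\Vh(A)$ for every $A\in\meshag$, so by Theorem~\ref{th:inf_sup_hig_ord} it suffices to verify that $\Vh\times(\Pordhd{0}\cap L^2_0(\Dom))$ meets the hypotheses of Lemma~\ref{lm:inf_sup_con_pre}, i.e., that the quasi-Fortin interpolant of Lemma~\ref{lm:qua_For_int} exists. That construction needs only interface bubbles on $\agints^+$ (available there by the very definition of $\agints^+$), the extended Scott--Zhang interpolant with the approximability of Theorem~\ref{th:app_sco_zha} (applicable since $\Pord{q}(A)\subset\Qbord{q}(A)\subset\Vh(A)$ with $q\ge 1$), and the aggregate bubbles entering Lemma~\ref{lm:bub_sta} (available on $\meshag^+$ by definition). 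Nothing specific to Algorithm~\ref{alg:mix_lag} is required here.

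The crux is Assumption~\ref{ass:pre_sta}, and the first step I would take is to show that the stabilized sets contain the improper ones, that is $\meshag^-\subseteq\meshag^*$ and $\agints^-\subseteq\agints^*$. By construction $\meshag\setminus\meshag^*=\meshag\cap\meshact$ consists exactly of the aggregates formed by a single interior hexahedron; on such an aggregate a standard (tensor-product quadratic) bubble supported on the cell lies in $\Qbord{2}(A)\cap H^1_0(A)\subset\Vh(A)$ and satisfies Definition~\ref{def:agg_bub}, so these aggregates belong to $\meshag^+$. Likewise $\agints\setminus\agints^*$ consists of the interfaces between two such interior hexahedra, and the usual face bubble — a scalar quadratic face bubble times the facet normal, which is in $\Qbord{2}\subseteq\Qbord{q}$ and vanishes outside the two cells — satisfies Definition~\ref{def:agg_int_bub}, so these interfaces lie in $\agints^+$. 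Taking complements yields the two inclusions. Granting them, $j_h$ is positive semidefinite by inspection of Algorithm~\ref{alg:mix_lag}, and the continuity bound \eqref{eq:pre_sta2} follows by estimating $j_h$ term by term with Cauchy--Schwarz, the trace inequality \eqref{eq:tra_ine2} and the cellwise inverse inequality \eqref{eq:inv_ine1} — so that $h^{1/2}\normregl2{\jump{\ph}}{F}\lesssim\normregl2{\ph}{A_F}$ and $h\,\normregl2{\Delta\uh}{A}+h\,\normregl2{\grad\qh}{A}\lesssim\normregl2{\grad\uh}{A}+\normregl2{\qh}{A}$ — together with the fact, recalled after Algorithm~\ref{alg:agg_sch}, that a fixed interior cell is met by a uniformly bounded number of aggregates; summing gives $j_h(\uh,\ph,\vh,\qh)\lesssim\tnorm{\uh,\ph}\,\tnorm{\vh,\qh}$.

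It remains to prove the coercivity-type bound \eqref{eq:pre_sta1}, which is the main obstacle. Using $\agints^-\subseteq\agints^*$, the jump part of $j_h(\uh,\ph,\uh,\ph)$ already dominates $\tau_{j1}\sum_{F\in\agints^-}h\,\normregl2{\jump{\ph}}{F}^2$. On each $A\in\meshag^-\subseteq\meshag^*$ I would split the residual contribution as $\normregl2{-\Delta\uh+\grad\ph}{A}^2\ge\tfrac12\normregl2{\grad\ph}{A}^2-\normregl2{\Delta\uh}{A}^2$, then bound $h^2\normregl2{\Delta\uh}{A}^2\lesssim\normregl2{\grad\uh}{A}^2$ by \eqref{eq:inv_ine1} and $h^2\normregl2{\grad\ph}{A}^2\gtrsim\normregl2{\ph-\pi_h^{-,0}(\ph)}{A}^2$ by the scaling/Poincar\'e--Wirtinger argument already used for \eqref{eq:bub_sta_bou3} (here $\pi_h^{-,0}(\ph)|_A$ is precisely the mean of $\ph$ over $A$, and $A$ contains a full interior hexahedron while $\mathrm{diam}(A)\lesssim\gamma h$). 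Summing over $\meshag^-$, again with the bounded overlap of aggregates and the extension stability of Lemma~\ref{lm:sta_agg_ext}, gives $j_h(\uh,\ph,\uh,\ph)\ge\tau_{j1}\sum_{F\in\agints^-}h\,\normregl2{\jump{\ph}}{F}^2+c_P\,\tau_{j2}\sum_{A\in\meshag^-}\normregl2{\ph-\pi_h^{-,0}(\ph)}{A}^2-C\,\tau_{j2}\tnorm{\uh}^2$. Choosing $\gamma_j\le\min(\tau_{j1},c_P\tau_{j2})$ and then $\tau_{j2}$ small enough that $C\,\tau_{j2}\le\gamma_a/2$ (with $\gamma_a$ the constant from \eqref{eq:ah_bh_sta_con}) turns this into \eqref{eq:pre_sta1}, completing the verification of Assumption~\ref{ass:pre_sta} and hence of the theorem. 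The delicate points are exactly this last absorption — which is why $\tau_{j2}$ must be taken small, in the same spirit as the Nitsche penalty $\pen$ being large — and the $h$-uniformity of the aggregate Poincar\'e constant, which hinges on the $\gamma h$ bound on aggregate diameters from \cite[Lem.~2.2]{Badia2017}; everything else is routine scaling.
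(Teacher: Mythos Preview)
Your overall strategy coincides with the paper's: first show $\meshag^-\subseteq\meshag^*$ and $\agints^-\subseteq\agints^*$ via the fact that interior-cell bubbles and interior-facet bubbles extend by zero in $\Qbqh$, then verify positive semidefiniteness and continuity of $j_h$, and finally derive \eqref{eq:pre_sta1} by splitting the residual term and combining the inverse inequality with a Poincar\'e--Wirtinger estimate on aggregates. The reductions to Theorem~\ref{th:inf_sup_hig_ord} and Lemma~\ref{lm:inf_sup_con_pre}, and the treatment of $L_h$, are also handled as in the paper.

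The one substantive difference is in the coercivity step. You use the fixed split $\normregl2{-\Delta\uh+\grad\ph}{A}^2\ge\tfrac12\normregl2{\grad\ph}{A}^2-\normregl2{\Delta\uh}{A}^2$, which forces you to take $\tau_{j2}$ small enough so that $C\tau_{j2}\le\gamma_a/2$. But Algorithm~\ref{alg:mix_lag} is stated for \emph{arbitrary} positive $\tau_{j1},\tau_{j2}$, and Theorem~\ref{th:mix_lag} is meant to cover that case. The paper instead proves, for any $\alpha>1$ and with $\gamma\doteq\omega\normregh1{\uh}{A}^2/\normregl2{h\Delta\uh}{A}^2\ge\omega/C^2$,
\[
2\normregl2{-h\Delta\uh+h\grad\ph}{A}^2\ \ge\ \normregl2{-h\Delta\uh+h\grad\ph}{A}^2+\frac{1}{1+2/\gamma}\,\normregl2{h\grad\ph}{A}^2-\omega\,\normregh1{\uh}{A}^2,
\]
and then chooses the free parameter $\omega$ (depending on $\tau_{j2}$) so that the negative velocity contribution is absorbed by $\tfrac{\gamma_a}{2}\tnorm{\uh}^2$; the resulting $\gamma_j$ is small but positive, and no restriction on $\tau_{j2}$ is needed. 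Your argument is easily repaired in the same spirit: replace the $\tfrac12/1$ split by the parametrized Young inequality $\|a+b\|^2\ge(1-\epsilon)\|b\|^2-(\epsilon^{-1}-1)\|a\|^2$ and pick $\epsilon\in(0,1)$ close to $1$ when $\tau_{j2}$ is large. With that change your proof matches the paper's in scope; as written it only establishes the theorem under an extra smallness hypothesis on $\tau_{j2}$ that the statement does not carry.
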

  \begin{proof}
    It is clear that for aggregates that are interior cells, i.e., in $\meshag \cap \meshact$, the extension of their quadratic bubble functions is the zero extension outside the cell. Thus, for these cells, there exists a cell bubble satisfying the requirements in Def. \ref{def:agg_bub}. Analogously, for facets that are being shared by two aggregates that are interior cells, the extension of the corresponding quadratic bubble function is also the zero extension. These facet bubbles satisfy the requirements in Def. \ref{def:agg_int_bub}. Thus, $\agints^- \subset \agints^*$ and $\meshag^- \subset \meshag^*$.

    As required in Ass. \ref{ass:pre_sta}, the pressure stabilization is positive semidefinite. In order to prove that \eqref{eq:pre_sta1} holds, we use the following inequality. Given three functions $v,p$ in a Hilbert space $X$ and $u$ in a Banach space $Y$, defining $\gamma^{\frac{1}{2}} \doteq \frac{\| u \|_{Y}}{\| v \|_{X}}$, we have, using Young's inequality for an arbitrary constant $\alpha > 1$:
    \begin{align}
      2 \| p+v \|_{X}^2 
      & = \| p+v \|_{X}^2 + \| p \|_{X}^2 + \| v \|_{X}^2 - 2(p,v)_{X}   \\ &\geq
       \| p+v \|_{X}^2 + \left( 1-\frac{1}{\alpha} \right) \| p \|_{X}^2 + (\alpha-1) \| v \|_{X}^2 \\ & = \| p+v \|_{X}^2 + \left( 1-\frac{1}{\alpha} \right)\| p \|_{X}^2 - \frac{\alpha-1}{\gamma} \| u \|_{Y}^2.
    \end{align} 
    Taking $\alpha = 1 + \frac{\gamma}{2} > 1$, we obtain
    \begin{align}
      2 \| p+v \|_{X}^2 & \geq  \| p+v \|_{X}^2 + \frac{1}{1+ \frac{2}{\gamma}}\| p \|_{X}^2 - \| u \|_{Y}^2.
    \end{align}
    Let us consider $X = L^2(A)$, $v = - h \Delta \uh$, $p = h \grad \ph$, $Y= \H^1(A)$, and $u = \omega^{\frac{1}{2}} \uh$, for $A \in \meshag^-$ and an arbitrary positive constant $\omega$. Using the inverse inequality \eqref{eq:inv_ine1}, we have that $ h \normregl2{- \Delta \uh }{A} \leq C  \normregh1{\uh}{A}$, thus $\gamma \geq C^{-2} \omega^{-1}$. The previous bound leads to:
    \begin{align}\label{eq:mix_lag_bou1}
      \sum_{A \in \meshag^*} h^2 \normregl2{-\Delta \uh + \grad \ph}{A}^2 \geq C \sum_{A \in \meshag^*} h^2 \normregl2{\grad \ph}{\Dom}^2 - \sum_{A \in \meshag^*} \frac{\omega}{2} \normregh1{\uh}{A}^2.
    \end{align}
    The Poincar\'e-Wirtinger inequality with a scaling argument yields:
    \begin{align}\label{eq:mix_lag_bou2}
    \| \ph - \pi_h^{-,0}(\ph) \|_A \lesssim h \normregl2{\grad (\ph - \pi_h^{-,0}(\ph)}{\Dom} =  h \normregl2{\grad \ph}{\Dom}.
    \end{align}
    Combining \eqref{eq:mix_lag_bou1} and \eqref{eq:mix_lag_bou2}, and adjusting $\omega$ accordingly, we find
 \begin{align}\label{eq:mix_lag_sta2}
j_h(\uh,\ph,\uh,\ph) \geq  \gamma_j \sum_{A \in \meshag^*} \| \ph - \pi_h^{-,0}(\ph) \|_A^2 + \gamma_j \sum_{F \in \agints^*} h \normreg{\jump{\ph}}{F}^2 - \frac{\gamma_a}{2 \gamma_j} \normregh1{\uh}{\Dom}^2,
 \end{align}
 for a positive constant $\gamma_j$. Thus, the stabilization term satisfies Ass. \ref{ass:pre_sta}. Its continuity  in \eqref{eq:pre_sta2} is obtained from the trace inequalities \eqref{eq:tra_ine}-\eqref{eq:tra_ine2} and the inverse inequality \eqref{eq:inv_ine1}. This result, together with \eqref{eq:fin_inf_sup}, proves the well-posedness of the discrete operator. Furthermore, for $\f \in L^2_0(\Omega)$, we can easily prove that $g_h(\f,\vh) \leq \xi_g \normregl2{\f}{\Dom} \tnorm{\vh}$. \end{proof}
\begin{method} \label{alg:mix_ser}
  We consider a hex mesh, the velocity space $\Vh \doteq \Qbqhm$, and the pressure space $\Qh \doteq \Pordhd{q-1}$ for an integer $2 \leq q \leq 2d-2$. On the other hand, for any facet $F \in \agints$, and the two aggregates $A_F, B_F \in \meshag$, we include the facet in the subset of facets $\agints^*$ if $A_F$ or $B_F$ belong to the set $\meshag \setminus \meshag \cap \meshact$. In 3D, if $q \leq 2d-3$, $F \in \agints^*$ can be restricted further, by considering only those before that also satisfy that their corresponding owner interior cells $K_{A_F}$ and $K_{B_F}$ do not share a \ac{fe} facet, i.e., $|K_{A_F} \cap K_{B_F}| = \emptyset$ in $d-1$ sense. The pressure stabilization term is taken as:
  \begin{align}
j_h(\ph,\qh) \doteq \sum_{F \in \agints^*} \tau_{j1} h \ipreg{\jump{\ph}}{\jump{\qh}}{F}, \label{eq:mix_ser_sta}
  \end{align}
for a positive algorithmic constant $\tau_{j1}$, whereas $g_h(\f,\vh) \doteq 0$. 
  \end{method}
The \ac{agfe} space again relies on $\Qbqhm \times \Pordhd{q-1}$ for the interior cells. The velocity field is extended to cut cells by the serendipity extension operator in Sect. \ref{ssec:ser_ext}, and the discontinuous pressure field is extended by the standard (discontinuous) one. This choice has also been motivated by the proof of the abstract discrete inf-sup condition.
  \begin{theorem}\label{th:mix_ser}
The method proposed in Alg. \ref{alg:mix_ser} has a pressure stabilization term that satisfies Ass. \ref{ass:pre_sta} and thus, it satisfies Th. \ref{th:fin_inf_sup}. As a result, the discrete problem \eqref{eq:dis_sto} is well-posed for $\f \in \H^{-1}(\Omega)$. 
  \end{theorem}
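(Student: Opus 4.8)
The plan is to follow the template of the proof of Theorem~\ref{th:mix_lag}: exhibit a manageable superset of the improper aggregates and improper interfaces associated with the velocity space $\Vh = \Qbqhm$, check that this superset is contained in the stabilized sets $\meshag^*$ and $\agints^*$ prescribed by Algorithm~\ref{alg:mix_ser}, verify that the pressure jump term~\eqref{eq:mix_ser_sta} meets Assumption~\ref{ass:pre_sta}, and then invoke Theorem~\ref{th:fin_inf_sup}. The decisive new ingredient --- the reason the cell-residual term of Algorithm~\ref{alg:mix_lag} is no longer needed --- is that under the stated degree bounds the serendipity constraint operator~\eqref{eq:con_ser} annihilates the low-order interior bubbles of the owner cells: such a bubble vanishes at every serendipity node, hence its serendipity nodal interpolant is zero and the aggregation constraints extend it by zero. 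These bubbles therefore survive as aggregate (resp.\ interface) bubbles even on genuine aggregates, so in fact $\meshag^- = \emptyset$ and the only interfaces needing stabilization already lie in $\agints^*$.

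\textbf{No improper aggregates.} Let $A\in\meshag$ with owner cell $K_0 \doteq \mathcal{O}(A)$, and let $b_{K_0}\in\Qbord{2}(K_0)\cap H^1_0(K_0)$ be the scaled $\mathcal{Q}_2$ cell bubble; it belongs to the local tensor-product velocity space on $K_0$ since $q\geq 2$. As $q\leq 2d-2<2d$, the serendipity space on a $d$-cube has no interior \ac{dof}, so all its nodes lie on $\partial K_0$, where $b_{K_0}$ vanishes; hence $\breve{\pi}^I_{K_0}(b_{K_0})=0$, and by~\eqref{eq:con_ser} every outer \ac{dof} of $A$ is constrained to zero while every \ac{dof} shared with $K_0$ already carries value zero. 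The aggregation constraints then pin the function to zero on the remaining cells of $A$, and on any \ac{dof} of $\partial A$ owned by a neighbouring aggregate, so the zero extension $\phi^A_h$ of $b_{K_0}$ lies in $\Vh$, is supported in $\overline{K_0}\subseteq\overline{A}$, is nonnegative and uniformly bounded, and satisfies $\int_A\phi^A_h=\int_{K_0}b_{K_0}\eqsim h^d\eqsim|A|$ by quasi-uniformity and the bounded aggregate size. This is an aggregate bubble in the sense of Definition~\ref{def:agg_bub}, so $\meshag^-=\emptyset$.

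\textbf{Improper interfaces.} A facet $F_{AB}$ outside $\agints^*$ is shared by two interior cells $A,B$. No aggregation takes place near $F_{AB}$, and the field $\bsphih^{F_{AB}}\doteq\n\,b_{F_{AB}}\,g$, with $b_{F_{AB}}$ the $\mathcal{Q}_2$ facet bubble of $F_{AB}$ and $g$ the continuous piecewise-linear transverse factor equal to $1$ on $F_{AB}$ and vanishing on the opposite faces of $A$ and $B$, lies in $\Qbord{2}$ on each cell, vanishes on $\partial(A\cup B)$, and satisfies $\int_{F_{AB}}\bsphih^{F_{AB}}\cdot\n=\int_{F_{AB}}b_{F_{AB}}\eqsim|F_{AB}|$; it is thus an interface bubble (Definition~\ref{def:agg_int_bub}), so $F_{AB}\in\agints^+$. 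For the optional reduction allowed in 3D when $q\leq 2d-3$, the same construction is carried out on the \ac{fe} facet $G$ shared by the owner cells $K_{A_F},K_{B_F}$: since $q<2(d-1)$, the serendipity space has no \ac{dof} interior to a facet, $b_G$ vanishes at every serendipity node of $K_{A_F}$ and $K_{B_F}$, the constraints extend $\bsphih^{F_{AB}}$ by zero onto the cut cells of $A_F,B_F$, and the flux through $G$ yields $\int_{F_{AB}}\bsphih^{F_{AB}}\cdot\n\eqsim|G|\gtrsim|F_{AB}|$. In either case $\agints^-\subseteq\agints^*$.

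\textbf{Conclusion.} With $\meshag^-=\emptyset$ and $\agints^-\subseteq\agints^*$, the right-hand side of~\eqref{eq:pre_sta1} has no aggregate term and reduces to $\sum_{F\in\agints^-}h\normregl2{\jump{\ph}}{F}^2-\frac{\gamma_a}{2\gamma_j}\tnorm{\uh}^2$, which is dominated by $\frac{1}{\gamma_j}j_h(\uh,\ph,\uh,\ph)=\frac{\tau_{j1}}{\gamma_j}\sum_{F\in\agints^*}h\normregl2{\jump{\ph}}{F}^2$ for any $\gamma_j\leq\tau_{j1}$; semidefiniteness is obvious and continuity~\eqref{eq:pre_sta2} follows from the trace inequality~\eqref{eq:tra_ine2} and the inverse inequality~\eqref{eq:inv_ine1} exactly as in Theorem~\ref{th:mix_lag}. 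The remaining hypotheses of Theorem~\ref{th:fin_inf_sup} hold because the local velocity space is the tensor-product $\Qbord{q}$ on every cell, so the bubbles entering Theorem~\ref{th:inf_sup_hig_ord} and the quasi-Fortin construction of Lemma~\ref{lm:qua_For_int} (with the serendipity-extended Scott--Zhang interpolant) are, by the computations above, zero extensions lying in $\Vh$. Theorem~\ref{th:fin_inf_sup} then provides the discrete inf-sup condition and hence well-posedness of~\eqref{eq:dis_sto}; since $g_h\equiv 0$ for Algorithm~\ref{alg:mix_ser}, the load functional is $L_h(\vh,\qh)=\ip{\f}{\vh}$, which is bounded on $\Vh\times\Qh$ whenever $\f\in\H^{-1}(\Omega)$, because $\tnorm{\cdot}$ controls $\normregh1{\cdot}{\Dom}$ (Poincar\'e with the boundary penalty term). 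The main obstacle is the careful bookkeeping behind the ``serendipity constraints produce the zero extension'' claim: confirming that all serendipity nodes of the owner cell (resp.\ owner facet) are boundary nodes precisely under $q\leq 2d-2$ (resp.\ $q\leq 2d-3$), and that the \acp{dof} of $\Vh$ lying on $\partial A$ but not on $\partial K_0$ are genuinely forced to zero by the constraint system rather than left free.
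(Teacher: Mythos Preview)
Your proof is correct and follows essentially the same route as the paper's own proof: both hinge on the observation that, for $q\leq 2d-2$, the serendipity nodes of a $d$-cube all lie on its boundary, so the serendipity extension sends the owner-cell $\mathcal{Q}_2$ bubble to the zero extension, giving $\meshag^-=\emptyset$; and that facets outside $\agints^*$ carry the usual two-cell facet bubble (with the analogous facet-node argument for the optional 3D refinement when $q\leq 2d-3$), giving $\agints^-\subseteq\agints^*$. Your write-up is in fact more explicit than the paper's --- you spell out the bubble constructions, the flux computation $\int_{F_{AB}}\bsphih^{F_{AB}}\cdot\n\eqsim|G|\gtrsim|F_{AB}|$ for the 3D reduced case, and the boundedness of $L_h$ for $\f\in\H^{-1}(\Omega)$ via the Poincar\'e-type control of $\normregh1{\cdot}{\Dom}$ by $\tnorm{\cdot}$ --- and the bookkeeping concern you flag at the end (that outer \acp{dof} owned by a neighbouring aggregate are also forced to zero) is genuine but resolves exactly as you indicate, since those constraints read values only from interior cells on which the candidate bubble already vanishes.
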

  \begin{proof}
    First, we note that for the serendipity \ac{fe} up to order $2d-2$, a unisolvent set of  \acp{dof} are nodal values on the cell boundary only, and thus, zero for bubble functions (see \cite{Arnold2011} for more details). Thus, the serendipity extension of the quadratic bubble function of the owner cell of the aggregate is zero in $\Dom \setminus \Domin$ for $q \leq 2d-2$. Thus, all the aggregates are proper, and no cell interior stabilization is needed. On the other hand, for facets in $\agints \setminus \agints^*$, i.e., facets between interior cells that have not been aggregated to other cut cells, the quadratic facet function belongs to the space $\Vh$ (we note that it leads to a zero extension outside of the two interior cells that share the facet), and satisfies the requirements in Def. \ref{def:agg_int_bub}. Thus, the only stabilization that is needed is on the interface between aggregates that are not simply interior cells. In 3D, since serendipity \ac{fe} up to order $2d-3$ do not include the \acp{dof} corresponding to the quadratic facet bubbles, i.e., the facet bubbles of interior cells that are the owner of an aggregate are extended by zero. Thus, the subset $ \agints^*$ can be restricted as stated in the definition of the algorithm. Thus, $\agints^- \subset \agints^*$ and $\meshag^- = \emptyset$. It is obvious to check that
    \begin{align}
      j_h(\uh,\ph,\uh,\ph) =  \sum_{F \in \agints^*} \tau_{j1} h \normregl2{\jump{\ph}}{F}^2.
    \end{align}
    We can readily check that the stabilization term satisfies \eqref{eq:pre_sta1}. The continuity result in \eqref{eq:pre_sta2} is readily obtained from the trace inequalities \eqref{eq:tra_ine}-\eqref{eq:tra_ine2}. As a result, the stabilization term satisfies Ass. \ref{ass:pre_sta}. This result, together with \eqref{eq:fin_inf_sup}, proves the theorem.
  \end{proof}

  \subsection{\emph{A priori} error estimates} \label{ssec:a_pri_err_est}

At this point, we have already checked that Algs. \ref{alg:mix_lag} and \ref{alg:mix_ser} are well-posed. Next, we want to prove \emph{a priori} error estimates for these algorithms. The proof of these results is fairly straightforward, since the pressure stabilization terms are consistent for pressure fields in $H^1(\Omega)$. As usual, Galerkin orthogonality, the stability in Ths. \ref{th:mix_lag} and \ref{th:mix_ser}, and the approximability properties in Th. \ref{th:app_sco_zha}, lead to the desired results.
  
Let us note that the jump stabilization in \eqref{eq:mix_ser_sta} (also in \eqref{eq:mix_lag_sta}) can be modified by integrating not only on (potentially) cut facets $F \in \agints$ but in the corresponding whole facets. Such modification does provide more stabilization and does not affect the consistency of the method in the error analysis of Th. \ref{th:a_pri_est} below.

  \begin{theorem}\label{th:a_pri_est}
    Let us assume that the solution $(\u,p)$ of the Stokes problem \eqref{eq:wea_for} belongs to $\H^{k+1}(\Omega) \times H^{k}(\Omega)$ for some $k \geq 1$. Then, the discrete solution $(\uh,\ph) \in \vh \times \Qh$ in Algs. \ref{alg:mix_lag} and \ref{alg:mix_ser} satisfy the following \emph{a priori} error estimate: 
\begin{align}
\tnorm{\u - \uh, p - \ph} \lesssim h^{k} \| \u \|_{H^{k+1}(\Omega)} +h^{k} \| p \|_{H^{k}(\Omega)}. 
\end{align}
\end{theorem}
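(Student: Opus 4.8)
The plan is to follow the standard route for stabilized mixed methods: split the error into an interpolation part and a discrete part, control the discrete part by the discrete inf-sup condition of Theorem~\ref{th:fin_inf_sup}, and feed into it Galerkin orthogonality together with the continuity of the bilinear forms. First I would fix interpolants. For the velocity I take the (componentwise) extended Scott–Zhang interpolant $\pisz(\u) \in \Vh$ for Alg.~\ref{alg:mix_lag} and its serendipity counterpart $\breve{\pi}_h^{SZ}(\u) \in \Vh$ for Alg.~\ref{alg:mix_ser}; for the pressure I take $\pi_h^{-,q-1}(p) \in \Qh = \Pordhd{q-1}$, the $L^2$-projection onto the discontinuous piecewise-polynomial pressure space, for which the aggregate-local Deny–Lions/Bramble–Hilbert argument combined with $\operatorname{diam}(A) \lesssim \gamma h$ gives $\normregl2{p - \pi_h^{-,q-1}(p)}{A} \lesssim h^{k}|p|_{H^k(\omega(A))}$ and $\seminormregh1{p - \pi_h^{-,q-1}(p)}{A} \lesssim h^{k-1}|p|_{H^k(\omega(A))}$. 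Writing $\u - \uh = \eta_u + e_h$, $p - \ph = \eta_p + \epsilon_h$ with $\eta_u, \eta_p$ the interpolation errors and $(e_h,\epsilon_h) \in \Vh \times \Qh$, the triangle inequality reduces the claim to bounding $\tnorm{e_h,\epsilon_h}$.

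Next I would invoke Theorem~\ref{th:fin_inf_sup}: there exists $(\vh,\qh) \in \Vh \times \Qh$ with $\tnorm{\vh,\qh} \lesssim 1$ and $\tnorm{e_h,\epsilon_h} \lesssim \Ah{e_h}{\epsilon_h}{\vh}{\qh}$. Then I use consistency. The Nitsche form $\ah{\cdot}{\cdot}+\bh{\cdot}{\cdot}$ is consistent for the exact solution (this is the point of the Nitsche imposition of $\u = \g$), and so is the stabilization: on any facet $\jump{p} = 0$ since $p \in H^1(\Omega)$, and on any aggregate $-\Delta \u + \grad p = \f$, so for the exact solution $\ch{\u}{p}{\vh}{\qh}$ reduces exactly to the contribution of $g_h(\f,\cdot)$; hence $\Ah{\u}{p}{\vh}{\qh} = \Lh{\vh}{\qh}$ for all discrete test pairs, and therefore $\Ah{e_h}{\epsilon_h}{\vh}{\qh} = -\Ah{\eta_u}{\eta_p}{\vh}{\qh}$. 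Now I bound the right-hand side termwise using the continuity of the forms. For $\ah{\eta_u}{\vh}$ I cannot use the $\tnorm{\cdot}$-continuity of $a_h$ from Lemma with a \emph{non-discrete} first argument, so I introduce the mesh-dependent norm $\opnorm{\v}_{*}^2 \doteq \tnorm{\v}^2 + \sum_{K \in \meshact} h\normregl2{\dn \v}{\bou \cap K}^2$, and observe that $a_h$ is continuous with $\opnorm{\cdot}_{*}$ on the first slot and $\tnorm{\cdot}$ on the discrete second slot; for $\bh{\eta_u}{\qh}$ and $\bh{\vh}{\eta_p}$ I use the continuity of $b_h$ together with trace and inverse inequalities applied to the discrete factor; and for $\ch{\eta_u}{\eta_p}{\vh}{\qh}$ I use \eqref{eq:pre_sta2} after estimating $\ch{\eta_u}{\eta_p}{\eta_u}{\eta_p}$.

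Finally I would estimate each interpolation quantity by $h^k$ using Theorem~\ref{th:app_sco_zha}. It gives $\normregl2{\grad \eta_u}{A} \lesssim h^{k}|\u|_{H^{k+1}(\omega(A))}$, $\seminormregh1{\eta_u}{A}$-type bounds and $|\eta_u|_{H^2(A)} \lesssim h^{k-1}|\u|_{H^{k+1}(\omega(A))}$ (valid for $k \geq 1$), and the planar-section estimates yield $h^{-1}\normregl2{\eta_u}{\bou\cap K}^2 + h\normregl2{\dn \eta_u}{\bou\cap K}^2 \lesssim h^{2k}|\u|_{H^{k+1}(\omega(A))}^2$; summing over cells/aggregates with the $h$-independent bounded overlap of the patches $\omega(A)$ gives $\opnorm{\eta_u}_{*} \lesssim h^{k}\|\u\|_{H^{k+1}(\Omega)}$, and $\tnorm{\eta_p} = \normregl2{\eta_p}{\Dom} \lesssim h^{k}\|p\|_{H^{k}(\Omega)}$. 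For the stabilization, the jump term satisfies $h\normregl2{\jump{\eta_p}}{F}^2 \lesssim \normregl2{\eta_p}{A}^2 + h^2\seminormregh1{\eta_p}{A}^2 \lesssim h^{2k}|p|_{H^k(\omega(A))}^2$ (using $\jump{p}=0$ and the trace inequality), and, in Alg.~\ref{alg:mix_lag}, the cell-residual term obeys $h^2\normregl2{-\Delta \eta_u + \grad \eta_p}{A}^2 \lesssim h^2(|\eta_u|_{H^2(A)}^2 + \seminormregh1{\eta_p}{A}^2) \lesssim h^{2k}(\|\u\|_{H^{k+1}}^2 + |p|_{H^k}^2)$, while $g_h$ has already been accounted for in the consistency step. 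Collecting everything and using once more the triangle inequality with $\tnorm{\eta_u,\eta_p} \le (\opnorm{\eta_u}_{*}^2 + \normregl2{\eta_p}{\Dom}^2)^{1/2} \lesssim h^{k}(\|\u\|_{H^{k+1}(\Omega)} + \|p\|_{H^{k}(\Omega)})$ proves the theorem. I expect the delicate point to be the consistency/continuity argument for the non-discrete interpolation error, i.e.\ choosing $\opnorm{\cdot}_{*}$ and verifying that the Scott–Zhang error is controlled in it \emph{uniformly in the cut}; this is exactly where the planar-section bounds of Theorem~\ref{th:app_sco_zha} and the bounded overlap of the aggregate patches enter, and the rest is routine.
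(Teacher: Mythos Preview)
Your proposal is correct and follows essentially the same route as the paper: consistency of $A_h$ (pressure jumps vanish for $p\in H^1(\Omega)$, the residual term cancels against $g_h(\f,\cdot)$), Galerkin orthogonality, the discrete inf-sup from Theorem~\ref{th:fin_inf_sup}, continuity, and the Scott--Zhang approximation of Theorem~\ref{th:app_sco_zha}. Two minor differences are worth recording. First, the paper simply uses the extended Scott--Zhang interpolant for \emph{both} velocity and pressure, whereas you take the aggregate-wise $L^2$ projection $\pi_h^{-,q-1}(p)$ for the pressure; either choice works and gives the same rates. Second, you are actually more careful than the paper on one point: the paper writes the continuity step as $\Ah{\u-\pisz(\u)}{p-\pisz(p)}{\vh}{\qh}\le\xi_A\tnorm{\u-\pisz(\u),p-\pisz(p)}\tnorm{\vh,\qh}$, i.e.\ it invokes the $\tnorm{\cdot}$-continuity of $A_h$ with a non-discrete first argument, even though that continuity was only stated for $\Vh\times\Qh$. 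You rightly note that the boundary term $\ipreg{\dn(\u-\pisz(\u))}{\vh}{\bou}$ in $a_h$ needs the stronger mesh-dependent norm $\opnorm{\cdot}_{*}$ on the interpolation error and then close it with the planar-section estimates of Theorem~\ref{th:app_sco_zha}. This is the standard fix and is implicitly what the paper has in mind; your write-up makes it explicit.
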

\begin{proof}
  First, let us note that the bilinear form $A_h$ in  Algs. \ref{alg:mix_lag} and \ref{alg:mix_ser} is consistent. Since $p \in H^1(\Omega)$, the pressure jump stabilization vanishes. It is obvious to check that the interior residual-based stabilization vanishes too. Let us consider the extended Scott-Zhang projector for every component of the velocity $\pi_h^{SZ}(\u)$ and for the pressure $\pi_h^{SZ}(p)$. The Galerkin orthogonality and the continuity of $A_h$ readily yield:
\begin{align}
  \Ah{\uh - \pisz(\u)}{\ph - \pisz(p)}{\vh}{\qh} & = \Ah{\u - \pisz(\u)}{p-\pisz(p)}{\vh}{\qh} \\
  & \leq \xi_A \tnorm{\u - \pisz(\u), p - \pisz{p}} \tnorm{\vh , \ph}.
\end{align}
Taking as test function the $(\vh,\qh)$ for which the global inf-sup condition in Th. \ref{th:fin_inf_sup} is satisfied, we readily get:
\begin{align}
 \tnorm{\uh - \pisz(\u), \ph - \pisz(p)}^2 & \leq  \beta_d \Ah{\uh - \pisz(\u)}{\ph - \pisz(p)}{\vh}{\qh} \\ & \leq \beta_d  \xi_A \tnorm{\u - \pisz(\u), p - \pisz(p)} 
 \tnorm{\uh - \pisz(\u), \ph - \pisz(p)}.
\end{align}
Finally, the approximability properties of the extended Scott-Zhang projector in Th. \ref{th:app_sco_zha} yields:
\begin{align}
\tnorm{\u - \pisz(\u), p - \pisz{p}}^2 & = \normregl2{\grad (\u - \pisz(\u))}{\Dom}^2 +  \normregl2{h^{-\frac{1}{2}} (\u - \pisz(\u)) }{\bou}^2 + \normregl2{p- \pisz{p}}{\Dom}^2 \\ & \lesssim
h^{2k} \| \u \|^2_{H^{k+1}(\Omega)} + h^{2k} \| p \|^2_{H^{k}(\Omega)}. 
\end{align}
It proves the theorem.
\end{proof}

\subsection{Condition number bounds} \label{ssec:cond_nums}

It is well-known that extended \ac{fe} spaces without aggregation lead to arbitrary ill-conditioned systems, due to the small cut cell problem, i.e., when the ratio $\eta_K$ tends to zero (see \cite{de_prenter_condition_2017}  for details). Thus, arbitrarily high condition numbers are expected in practice since the position of the interface cannot be controlled and the value  $\eta_\cell$ can be arbitrarily close to
zero. It has motivated the \ac{agfem} in \cite{Badia2017}. We prove in the following theorem that the \ac{agfem} proposed herein for the Stokes problem lead to the same condition number bounds as for body-fitted methods, i.e., they do not depend on the cut cell intersection. We represent with $| \cdot |_{\ell^2}$ the Euclidean norm of vectors and matrices.

\begin{theorem}
The condition number of the matrices that arise from Algs. \ref{alg:mix_lag} and \ref{alg:mix_ser}, i.e., $
\kappa(A_h) \doteq | A_h |_{\ell^2} | A_h^{-1} |_{\ell^2}$, satisfies $\kappa(A_h) \leq C_\kappa h^{-2}$, for a positive constant $C_\kappa$.
\end{theorem}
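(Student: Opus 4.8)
The plan is to follow the standard two-sided strategy for condition number bounds of unfitted/aggregated FE systems, as in \cite[Sect. 7]{Badia2017}, but now applied to the saddle-point operator $A_h$ associated with \eqref{eq:sto_ope}. The key point is that all the mesh-size-dependent constants have already been made independent of the cut cell intersection thanks to the aggregation: the stability result \eqref{eq:fin_inf_sup} from Th. \ref{th:fin_inf_sup} and the continuity of $A_h$ (from the bounds in \eqref{eq:ah_bh_sta_con} and Ass. \ref{ass:pre_sta}) already control $A_h$ in the mesh-dependent norm $\tnorm{\cdot,\cdot}$. So the task reduces to relating the $\ell^2$ norm of coefficient vectors to the $\tnorm{\cdot,\cdot}$ norm of the corresponding FE functions, for which the crucial ingredient is that the $L^2$ mass matrix of the aggregated space is spectrally equivalent (up to the usual $h^d$ factor) to the identity, \emph{uniformly in the cut location}.

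First I would set up notation: write $\uv = (\uv_{\bs u}, \uv_p)$ for the coefficient vector of $(\uh,\ph)$ in the nodal basis of $\Vh \times \Qh$, with $(\uh,\ph) = (\bsphi\,\uv_{\bs u}, \bspsi\,\uv_p)$ in a slight abuse of the macros. The aim is a norm equivalence of the form
\begin{align}\label{eq:cond_norm_equiv}
 C_1\, h^{d}\, |\uv|_{\ell^2}^2 \ \leq\ \normregl2{\uh}{\Dom}^2 + \normregl2{\ph}{\Dom}^2 \ \leq\ C_2\, h^{d}\, |\uv|_{\ell^2}^2,
\end{align}
with $C_1,C_2$ independent of $h$ and of the cut cell intersection. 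The right inequality is elementary: each basis function is bounded in $L^\infty$ and supported in a bounded number of aggregates of measure $\lesssim h^d$, and the aggregate sizes are bounded by $\gamma h$. The left inequality is the delicate one and is where aggregation is essential: for a standard (non-aggregated) active space it fails because an outer shape function can have arbitrarily small support inside $\Dom$. For the aggregated space one argues per aggregate: the restriction to an aggregate $A$ of a function in $\fespag$ (resp.\ $\fespagm$) is a polynomial in $\Pq(A)$ (resp.\ the serendipity space), and by the stability of the extension operator (Lem. \ref{lm:sta_agg_ext}) and an inverse/equivalence-of-norms argument on the owner interior cell $\mathcal{O}(A)$ — which has measure $\eqsim h^d$ — the $L^2(A)$ norm controls the coefficients associated with $A$ from below; the discontinuous pressure case is even simpler since all its DOFs are local to the aggregate. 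One must also check that the local-to-global DOF map has bounded overlap, which follows from the aggregate-size bound $\gamma h$. This yields \eqref{eq:cond_norm_equiv}, and together with the inverse inequality \eqref{eq:inv_ine1} and trace inequality \eqref{eq:tra_ine2} it upgrades to
\begin{align}\label{eq:cond_tnorm_equiv}
 C_1'\, h^{d}\, |\uv|_{\ell^2}^2 \ \leq\ \tnorm{\uh,\ph}^2 \ \leq\ C_2'\, h^{d-2}\, |\uv|_{\ell^2}^2,
\end{align}
where the extra $h^{-2}$ on the right comes from bounding $\normregl2{\grad\uh}{\Dom}$ and the boundary terms by $h^{-1}$ times $L^2$ norms.

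With \eqref{eq:cond_tnorm_equiv} in hand the conclusion is routine. For the upper bound on $|A_h|_{\ell^2}$: given coefficient vectors $\uv,\bs{w}$, one has $\uv^\top A_h \bs{w} = A_h(\uh,\ph,\wh,\qh) \leq \xi_A \tnorm{\uh,\ph}\tnorm{\wh,\qh} \lesssim h^{d-2}|\uv|_{\ell^2}|\bs{w}|_{\ell^2}$ by continuity of $A_h$ and the right inequality of \eqref{eq:cond_tnorm_equiv}, so $|A_h|_{\ell^2} \lesssim h^{d-2}$. For the lower bound on the smallest singular value: using the inf-sup stability \eqref{eq:fin_inf_sup}, for any $\uv$ there is a test pair $(\wh,\qh)$ with $\beta_d^{-1}\tnorm{\uh,\ph} \leq A_h(\uh,\ph,\wh,\qh)/\tnorm{\wh,\qh}$, hence $|\uv^\top A_h \bs{w}|/|\bs{w}|_{\ell^2} \gtrsim \tnorm{\uh,\ph}\,|\bs{w}|_{\ell^2}^{-1}\,h^{(d-2)/2}\cdots$ — more cleanly, combining \eqref{eq:fin_inf_sup} with the left inequality of \eqref{eq:cond_tnorm_equiv} gives $|A_h\uv|_{\ell^2} \gtrsim h^{-(d-2)/2}\,\tnorm{\uh,\ph} \gtrsim h^{-(d-2)/2}\,h^{d/2}\,|\uv|_{\ell^2} = h\,|\uv|_{\ell^2}$, so $|A_h^{-1}|_{\ell^2} \lesssim h^{-1}$. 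Multiplying, $\kappa(A_h) = |A_h|_{\ell^2}|A_h^{-1}|_{\ell^2} \lesssim h^{d-2}\cdot h^{-1} \cdot h^{-(d-2)} $ — wait, one must be careful with the bookkeeping of powers, but tracking them gives $|A_h|_{\ell^2}\lesssim h^{d-2}$ and $|A_h^{-1}|_{\ell^2}\lesssim h^{-d}$, so $\kappa(A_h)\lesssim h^{-2}$, which is exactly the $h$-scaling of body-fitted mixed methods and is the claimed bound $C_\kappa h^{-2}$.

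The main obstacle is the left inequality in \eqref{eq:cond_norm_equiv}, i.e.\ proving that the $L^2$ mass matrix of the \emph{aggregated} velocity and pressure spaces has smallest eigenvalue bounded below by $C h^d$ uniformly in the cut position. This is precisely the property that fails for the naive active space and is the raison d'être of the aggregation; technically it requires combining the extension-operator stability of Lem. \ref{lm:sta_agg_ext} with a local argument on each aggregate reducing to equivalence of norms on the owner interior cell, plus the bounded-overlap property of the aggregates guaranteed by the $\gamma h$ size bound. Everything else — the continuity estimate for $A_h$, the inf-sup bound \eqref{eq:fin_inf_sup}, the trivial upper mass-matrix bound — is already available from the earlier sections. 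I would also remark that the same argument applies verbatim to both Algs. \ref{alg:mix_lag} and \ref{alg:mix_ser}, since the stabilization terms $j_h$ are positive semidefinite and continuous in $\tnorm{\cdot,\cdot}$ (by \eqref{eq:pre_sta2}), hence do not spoil the continuity bound, and they only help the coercivity/inf-sup side.
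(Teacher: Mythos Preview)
Your approach is essentially identical to the paper's: establish a two-sided $L^2$--$\ell^2$ norm equivalence $\normregl2{\uh}{\Dom}^2 + \normregl2{\ph}{\Dom}^2 \eqsim h^d |\uv|_{\ell^2}^2$ via the extension-operator stability of Lem.~\ref{lm:sta_agg_ext} on the interior mesh, upgrade to the mesh-dependent norm $\tnorm{\cdot,\cdot}$, and then combine continuity of $A_h$ with the inf-sup bound \eqref{eq:fin_inf_sup} to obtain $|A_h|_{\ell^2}\lesssim h^{d-2}$ and $|A_h^{-1}|_{\ell^2}\lesssim h^{-d}$.

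There is one step you gloss over that the paper makes explicit. The \emph{left} inequality in your two-sided bound $h^d |\uv|_{\ell^2}^2 \lesssim \tnorm{\uh,\ph}^2 \lesssim h^{d-2}|\uv|_{\ell^2}^2$ does not follow from the $L^2$ equivalence together with the inverse and trace inequalities: those only deliver the upper bound. For the velocity part of the lower bound you need $\tnorm{\uh}^2\gtrsim\normregl2{\uh}{\Dom}^2$, and $\tnorm{\uh}$ contains no bulk $L^2$ term. The paper invokes the Friedrichs inequality here (a constant velocity is controlled by the Nitsche boundary term $\normregl2{h^{-1/2}\uh}{\bou}$, so $\tnorm{\cdot}$ is a genuine norm on $\Vh$ and dominates $\normregl2{\cdot}{\Dom}$ with a domain-dependent constant $C(\Omega)$). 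Without this step your lower bound on the smallest singular value is unjustified. Also, your first pass at the exponent bookkeeping, giving $|A_h^{-1}|_{\ell^2}\lesssim h^{-1}$, is wrong because you applied the \emph{upper} side of the norm equivalence to the test function where the \emph{lower} side is required; the corrected bound $|A_h^{-1}|_{\ell^2}\lesssim h^{-d}$ you eventually state is the right one and matches the paper.
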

\begin{proof}
First, we note that $\uh \in \Vh$ can be stated in terms of a global basis of \ac{fe} shape functions as $\sum_{a=1}^{N_{u}} U_{i} \bs{\phi}_u^{a}$. We define the Cartesian norm for the vector of \ac{dof} values of $\uh$ as $|\uh|_{\ell^2}$. We proceed analogously for the pressure, e.g., $\ph = \sum_{a=1}^{N_{p}} P_{i} {\phi}_p^{a} \in \Qh$; we note that the pressure space has dimension $N_p-1$ due to the zero mean restriction, i.e., $\Qh \subset L^2_0(\Omega)$. Let us represent velocity-pressure functions in $\Vh \times \Qh$ with bold capital Greek letters. Given $\bsphi \doteq (\uh,\ph)$, we define $| \bsphi |_{\ell^2}^2 \doteq | \uh |_{\ell^2}^2 + | \ph |_{\ell^2}^2$. For any velocity component and pressure, we have from the fact that the eigenvalues of the local mass matrix in every interior cell are bounded (see, e.g., \cite{elman_finite_2005}), that $
C_m^- h^{d}   | U |_{\ell^2}^2 \leq  \normregl2{\ush}{\Domin}^2 \leq C_m^+ h^{d}   | U |_{\ell^2}^2$. This result, combined with the stability of the extension operator in Lem. \ref{lm:sta_agg_ext} yields:
\begin{align}
C_M^-  h^d  | \ush |_{\ell^2}^2  \leq  \normregl2{\ush}{\Dom}^2 \leq  C_M^+ h^{d}   | \ush |_{\ell^2}^2.\label{eq:nor_equ_l2}
\end{align}
Now, we can bound the following velocity norm using the inverse inequality \eqref{eq:inv_ine1}, the trace inequality \eqref{eq:tra_ine}, and the norm relation in \eqref{eq:nor_equ_l2}, as follows:
\begin{align}
  \tnorm{ \uh }^2 = \normregl2{\grad \uh}{\Dom}^2 +  \normregl2{h^{-\frac{1}{2}} \uh }{\bou}^2
  \lesssim h^{-2} \normregl2{ \uh}{\Dom}^2 \lesssim h^{d-2} | \uh |_{\ell^2}^2.
\label{eq:nor_bou_l2_vel}
\end{align}
Thus, we have $
  \tnorm{ \bsphi }^2 \lesssim   h^{d-2} | \bsphi |_{\ell^2}^2
$ for any $\bsphi \in \Vh \times \Qh$. The Friedrichs  inequality  and \eqref{eq:nor_equ_l2} yield $| \uh |_{\ell^2}^2 \leq C(\Omega) h^{-d} \normregl2{\uh}{\Dom}^2 \lesssim  C(\Omega) h^{-d}\tnorm{\uh}^2$. As a result: 
\begin{align}
C(\Omega)^{-1} h^{d}| \uh |_{\ell^2}^2 \lesssim  \tnorm{\uh}^2 \lesssim h^{d-2}\tnorm{\uh}^2.
\label{eq:nor_bou_l2_h}
\end{align}

We can bound the norm of $A_h$ by using its continuity (from the continuity results in \eqref{eq:ah_bh_sta_con} and \eqref{eq:pre_sta2}) and the norm equivalence in \eqref{eq:nor_bou_l2_h} as follows:
\begin{align}
| A_h |_{\ell^2}  = \max_{\bsphi \in \Vh \times \Qh} \max_{\bspsi \in \Vh \times \Qh} \frac{A_h(\bsphi,\bspsi)}{|\bsphi|_{\ell^2} |\bspsi|_{\ell^2}} \leq \xi_A \frac{\tnorm{\bsphi}\tnorm{\bspsi}}{|\bsphi|_{\ell^2} |\bspsi|_{\ell^2}} \lesssim  h^{d-2}.
\label{eq:nor_ope_l2}
\end{align}
Making abuse of notation, we use $A_h \bsphi \doteq A_h(\bsphi,\cdot)$. Next, we provide a lower bound for the norm of the operator $A_h \bsphi$, for some $\bsphi \in \Qh$. Using the inf-sup condition in Th. \ref{th:fin_inf_sup} and the norm equivalence in \eqref{eq:nor_bou_l2_h}, we obtain:
\begin{align}
| A_h \bsphi |_{\ell^2}  = \max_{\bspsi \in \Vh \times \Qh} \frac{A_h(\bsphi,\bspsi)}{|\bspsi|_{\ell^2}} =  \max_{\bspsi \in \Vh \times \Qh} \frac{A_h(\bsphi,\bspsi)}{\tnorm{\bspsi}}\frac{\tnorm{\bspsi}}{|\bspsi|_{\ell^2}} \geq \beta_d \tnorm{\bsphi} \min_{\bspsi \in \Vh \times \Qh} \frac{\tnorm{\bspsi}}{|\bspsi|_{\ell^2}}.\label{eq:con_num_bou1}
\end{align}
Combining \eqref{eq:con_num_bou1} and the lower bound in \eqref{eq:nor_bou_l2_h}, we get 
$| A_h \bsphi |_{\ell^2}  \gtrsim h^{d}| \bsphi |_{\ell^2}.
$ 
 Taking $ \bsphi = A_h^{-1} \bspsi$, we readily obtain  
$|  \bspsi |_{\ell^2}  \gtrsim h^{d} |A_h^{-1} \bspsi |_{\ell^2}
$.  Thus, $|A_h^{-1}|_{\ell^2} \lesssim h^{-d}$, which, together with \eqref{eq:nor_ope_l2}, proves the theorem.
\end{proof}

\section{Numerical experiments} \label{sec:num_exp}

The main purpose of this section is to evaluate the
performance of the \ac{agfe} spaces in several different
scenarios. We start with a convergence test (cf. Sect.~\ref{sec:conv-test}),
where we numerically validate the 	\emph{a priori} error estimates of Sect.~\ref{ssec:a_pri_err_est}
 and the condition number bounds of Sect.~\ref{ssec:cond_nums}.
Next, we consider a moving domain test (cf.
Sect.~\ref{sec:moving-dom}) in order to check the robustness  of the methods with respect to
small cuts. Finally, we provide the numerical solution of two realistic problems (cf.
Sect.~\ref{sec:misc-3d}) in order to illustrate the ability of the 
\ac{agfem} to deal with complex geometrical data.

\subsection{Setup}

In all cases, we solve the Stokes problem \eqref{eq:sto_eqs} using Galerkin
approximations with conforming Lagrangian \ac{fe} spaces as indicated in Sect.
\ref{sec:app_sto}. We consider both \ac{agfe} spaces and conventional ones in
order to evaluate the benefits of using cell aggregation.  For the conventional
(un-aggregated) case, we use $\Qqth(\meshact)$, and $\Pqthd(\meshact)$ spaces
for the approximation of velocities and pressures, respectively (e.g., in 3D,
hexahedral elements with continuous  piecewise triquadratic shape functions for
the velocity, and discontinuous piecewise linear shape functions for the
pressure). For the aggregated case, we consider the space $\Qqthm$ for
velocities (i.e., the aggregated version of $\Qqth(\meshact)$ using the
serendipity extension in the constraint definition as indicated in
Sect.~\ref{ssec:ser_ext}), whereas for pressures we use the aggregated
counterpart of $\Pqthd$. In order to fulfill inf-sup stability, we use the
facet-based stabilization given in Algorithm~\ref{alg:mix_ser} for the
aggregated spaces with $\tau_{j1}=0.01$ (the value that minimized the error for
a simple test and a set of possible constants).  The results for the usual
(un-aggregated) spaces are labeled as \emph{standard} 
throughout the numerical examples, whereas results using cell aggregation are
labeled as \emph{aggregated}. 
considered is provided in Table \ref{table:methods}.

\begin{table}[!htp]
\begin{tabular}{lp{0.75\textwidth}}
\toprule
\bf Name & \bf Description \\
\midrule
Standard & ($\Qqth(\meshact)$,$\Pqthd(\meshact)$) elements without cell aggregation.\\
Aggregated & ($\Qqthm$,$\Pqthd$) elements with cell aggregation using the
serendipity extension (cf. Sect. \ref{ssec:ser_ext}) for velocity components.\\
\bottomrule
\end{tabular}
\vspace*{1em}
\caption{\ac{fe} interpolations used in the experiments.}
\label{table:methods}
\end{table} 

The algorithms subject of study were coded using the tools provided by the object-oriented HPC code
\FEMPAR{} \cite{badia_fempar:_2017}. The underlying systems of linear equations
are solved by means of a robust sparse direct solver from the  MKL PARDISO package
\cite{_intel_????} specially designed for symmetric indefinite matrices (to which \FEMPAR{} provides
appropriate interfaces). The
condition number estimates provided below are computed outside \FEMPAR{} using
the MATLAB function {\tt condest}.\footnote{MATLAB is a trademark of THE
MATHWORKS INC.} {Numerical integration is based on local body-fitted triangulations of cut cells into triangles (in 2D) or tetrahedra (in 3D), where standard quadrature rules can be applied. The local triangulation of a cut cell is obtained by \FEMPAR{} from its nodal coordinates and the intersection points of cell edges  with the unfitted boundary via the Delaunay method available in the \texttt{QHULL} library \cite{_qhull_????,bradford_quickhull_1996}. Note that these sub-meshes are used only for integration
purposes and are completely independent from one cut cell to another (see \cite{badia_robust_2017} for details).}

\newcommand{\secttitle}{Convergence test}
\subsection{\secttitle}
\label{sec:conv-test}

We consider the Stokes problems defined in the 2D and 3D domains shown in
Fig.~\ref{fig:conv-geometries}.  The 2D domain (cf. Fig.
\ref{fig:conv-geometries-2d}) is a circular cavity defined as the set
difference of the unit square $[0,1]^2$ and the circle of radius $R=0.3$ and
center $C=(0.5,0.5)$.  The 3D domain is a complex-shaped cavity defined as the
set difference of the unit cube $[0,1]^3$ and a 3D body whose shape reminds the
one of a popcorn flake (cf.  Figs.~\ref{fig:conv-geometries-3d-1}
and~\ref{fig:conv-geometries-3d-2}). This ``popcorn-flake'' geometry is often
used in the literature to study the performance of unfitted \ac{fe} methods
(see, e.g., \cite{burman_cutfem:_2015}). The popcorn flake geometry considered
here is obtained by taking the one defined in \cite{burman_cutfem:_2015},
scaling it by a factor of $0.5$ and translating it a value of $0.5$ in each
direction such that the body fits in the unit cube $[0,1]^3$.  We consider
Dirichlet boundary conditions on the interior walls of the cavities, whereas
Neumann conditions are imposed on the facets of the unit square and unit cube
(see Fig.~\ref{fig:conv-geometries}). Dirichlet boundary conditions are imposed
using Nitsche's method as discussed in Sect. \ref{sec:app_sto}.

\begin{figure}[ht!]
  \centering

  \begin{subfigure}{0.9\textwidth}
    \tikz{\fill[mygray]  (0,0) rectangle (2.0em,1.0em);} Physical domain $\Omega$ 
    \hspace{1em}
    \tikz{\fill[myblue]  (0,0) rectangle (2.0em,1.0em);} Dirichlet boundary $\Gamma_\mathrm{D}$
    \hspace{1em}
    \tikz{\fill[mygreen]  (0,0) rectangle (2.0em,1.0em);} Neumann boundary $\Gamma_\mathrm{N}$
  \end{subfigure}

  \begin{subfigure}[b]{0.32\textwidth}
    \includegraphics[width=0.9\textwidth]{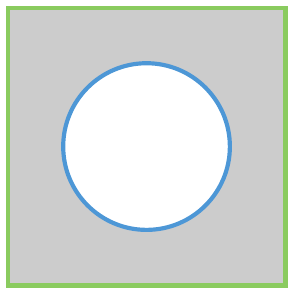}
    \vspace{1em}
    \caption{2D case.}
    \label{fig:conv-geometries-2d}
  \end{subfigure}
  \begin{subfigure}[b]{0.32\textwidth}
    \includegraphics[width=0.99\textwidth]{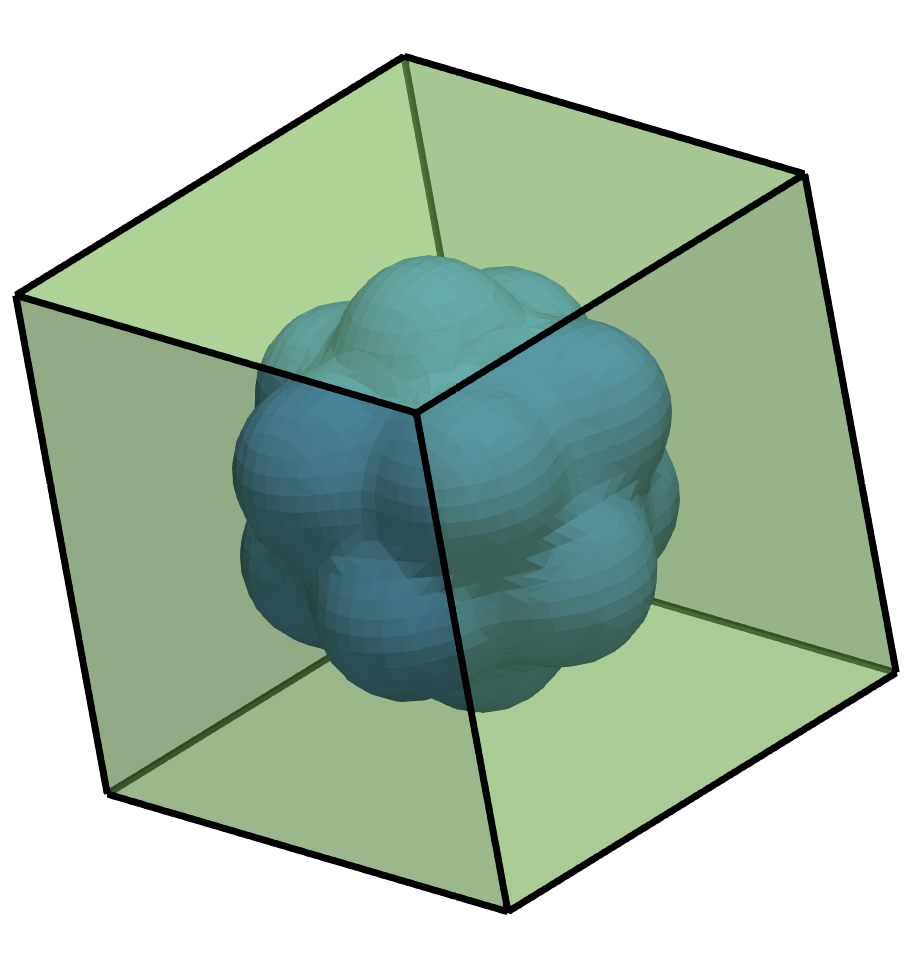}
    \caption{3D case (outer view).}
    \label{fig:conv-geometries-3d-1}
  \end{subfigure}
  \begin{subfigure}[b]{0.32\textwidth}
    \includegraphics[width=0.99\textwidth]{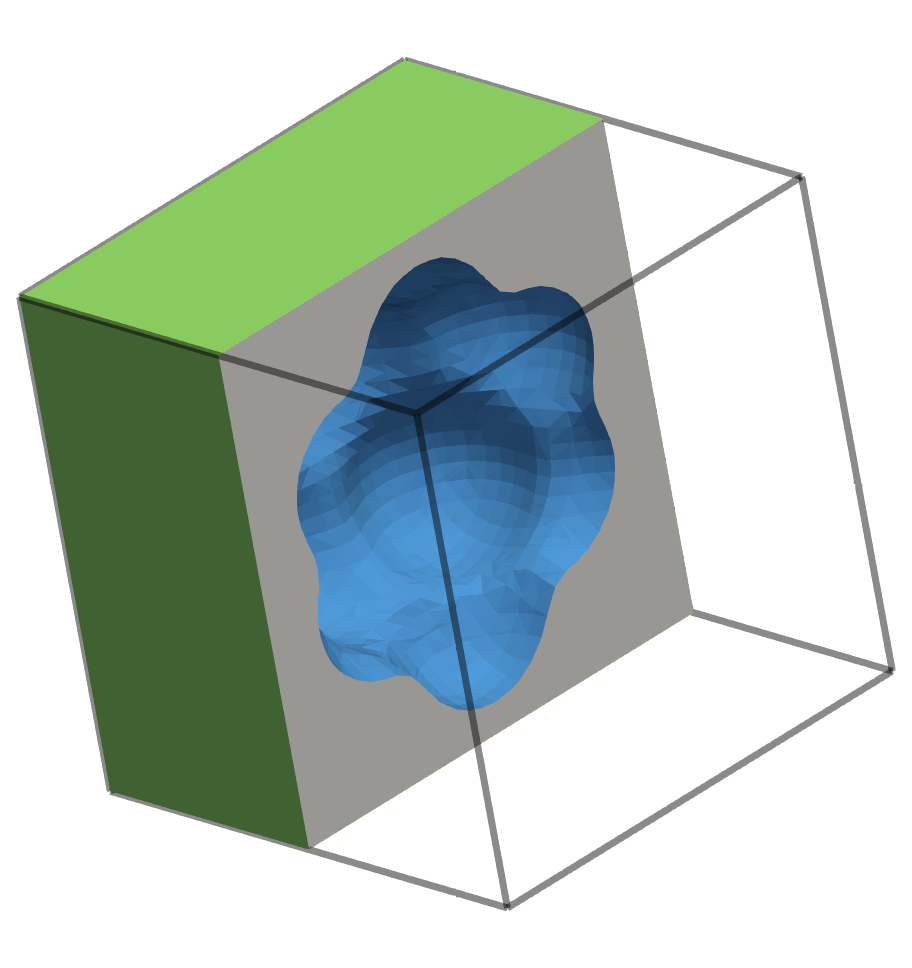}
    \caption{3D case (internal view).}
    \label{fig:conv-geometries-3d-2}
  \end{subfigure}

  \caption{\secttitle: View of the problem geometries.}
  \label{fig:conv-geometries}
\end{figure}

We use the method of manufactured solutions in order to have a problem with
known exact solution, which is used here to compute discretization errors. The
(manufactured) exact solution we have considered is
\begin{equation} \u\doteq\dfrac{\u^*}{|\u^*|},\hspace{4em} p\doteq x^{3}y^{3},
\label{eq:manuf-exact-sol} \end{equation} where \begin{equation}
  \begin{array}{l} \u^*=\left(-y+0.5,\ x +
    0.3\right)^t,\quad(x,y)\in\Omega\subset\mathbb{R}^2 \quad\text{in 2D,}\\
 \u^*=\left(y-0.5,\ -x - z -0.3,\ y-0.5
\right)^t,\quad(x,y,z)\in\Omega\subset\mathbb{R}^3\quad\text{in 3D.}
\end{array} \end{equation}
This solution corresponds to a (divergence-free) velocity field of magnitude
$1$ that spins around the point $(x,y)=(-0.3,0.5)$ for the 2D case and around
the line $(x,y,z)=(-z-0.3,0.5,z)$, $ z\in\mathbb{R}$, in 3D (see
Fig.~\ref{fig:conv-solution}).  The particular value of the boundary conditions
(both Dirichlet and Neumann) and external loads are defined such
that~\eqref{eq:manuf-exact-sol} is the exact solution of the Stokes problem
\eqref{eq:sto_eqs}. 

\begin{figure}[ht!]
  \centering
  \begin{subfigure}{0.32\textwidth}
    \centering
    \includegraphics[width=0.99\textwidth]{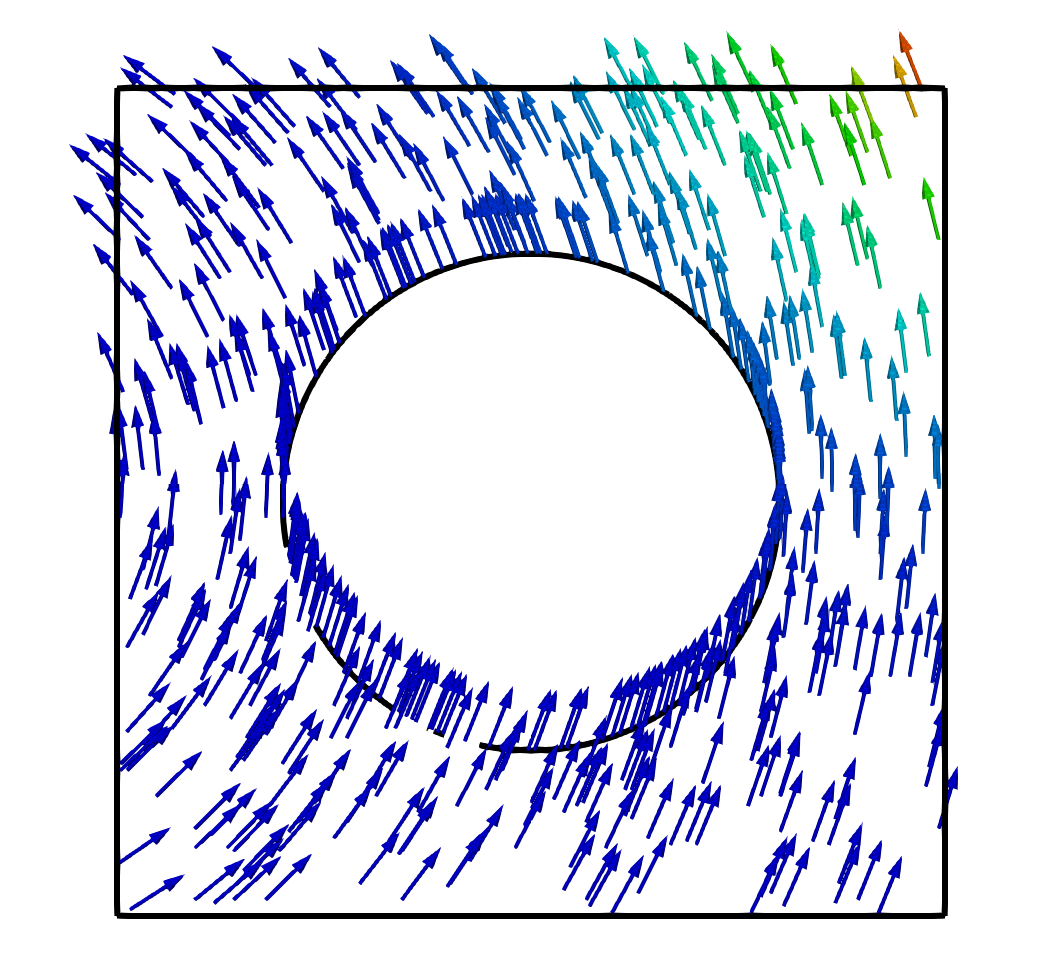}
    \caption{2D case.}
    \label{fig:conv-solution-2d}
  \end{subfigure}
  \begin{subfigure}{0.32\textwidth}
    \centering
    \includegraphics[width=0.99\textwidth]{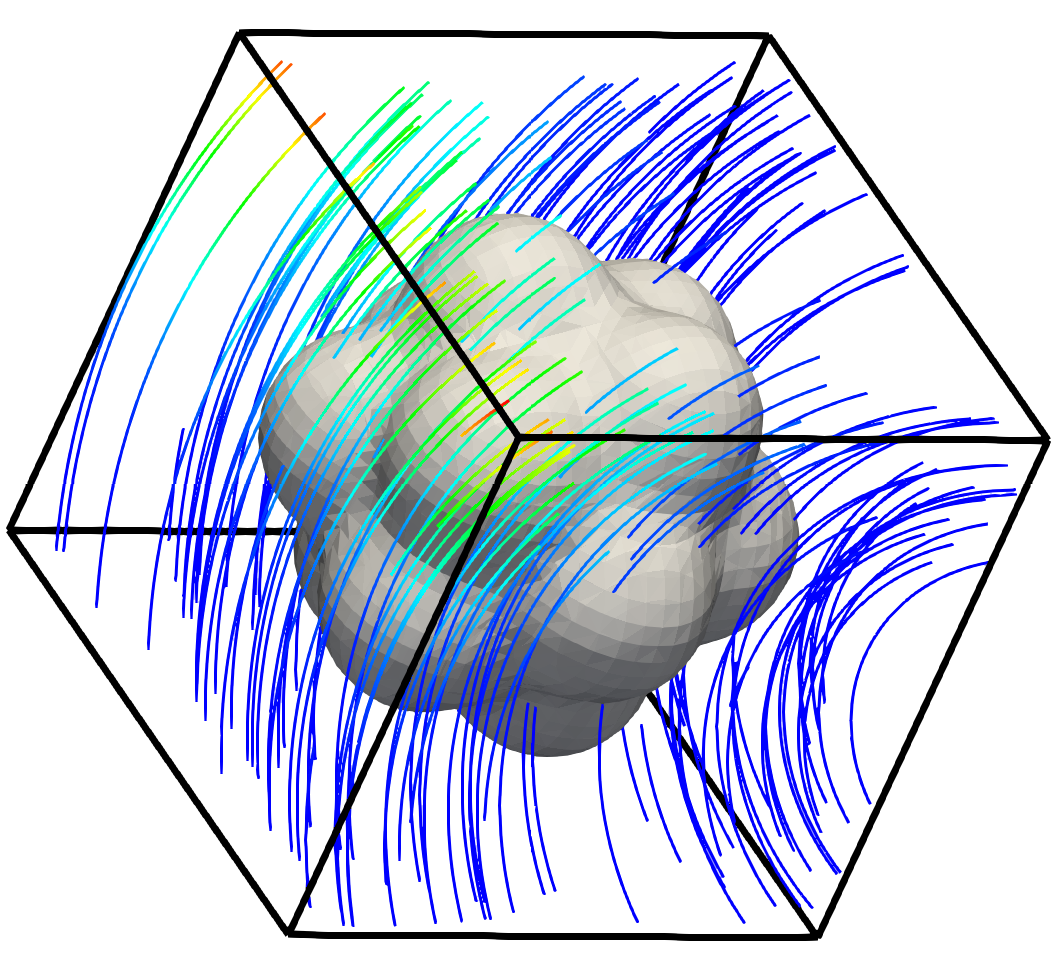}
    \caption{3D case.}
    \label{fig:conv-solution-3d}
  \end{subfigure}
  \begin{subfigure}{0.32\textwidth}
    \centering
    \begin{tikzpicture}
      \node[inner sep=0pt] (cbar) at (0,0) {\includegraphics[width=0.4\textwidth]{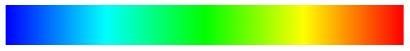}};
      \node[anchor=west] at (cbar.east) {\small $1$};
      \node[anchor=east] at (cbar.west) {\small $0$};
    \end{tikzpicture}
  \end{subfigure}
  \caption{\secttitle: View of the manufactured solution (vectors / streamlines colored by pressure field).}
  \label{fig:conv-solution}
\end{figure}

The numerical approximation is done using a family of uniform Cartesian meshes
obtained by dividing each direction of the unit square and cube into $2^m$
parts, with $m=3,4,\ldots,9$ in 2D, and $m=3,4,5$ in 3D. The obtained results
are displayed in Figs.~\ref{fig:conv-test-condest}, \ref{fig:conv-test-err-2d},
and~\ref{fig:conv-test-err-3d}.

\begin{figure}[ht!]
 \centering
 \includegraphics[scale=1]{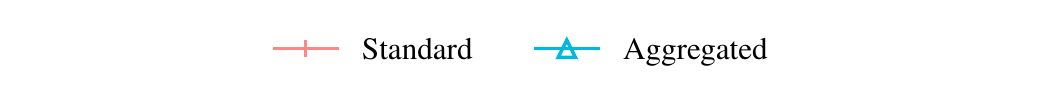}\\
  \begin{subfigure}{0.3\textwidth}
    \includegraphics[scale=1]{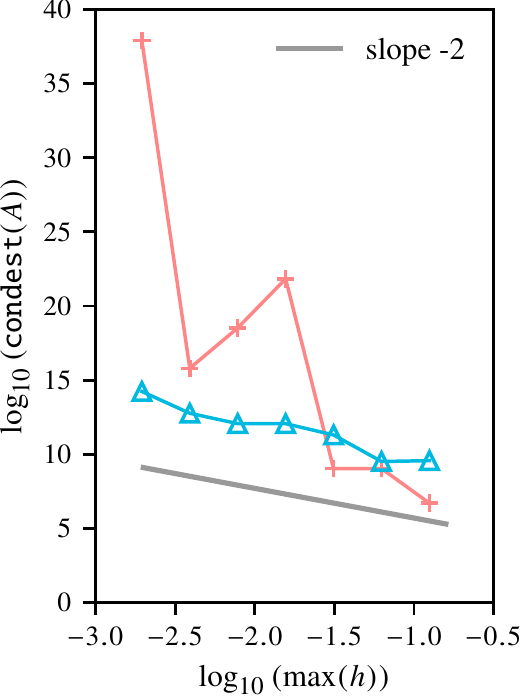}
    \caption{2D case.}
    \label{fig:conv-test-condest-a}
  \end{subfigure}
  \hspace{4em}
  \begin{subfigure}{0.3\textwidth}
    \includegraphics[scale=1]{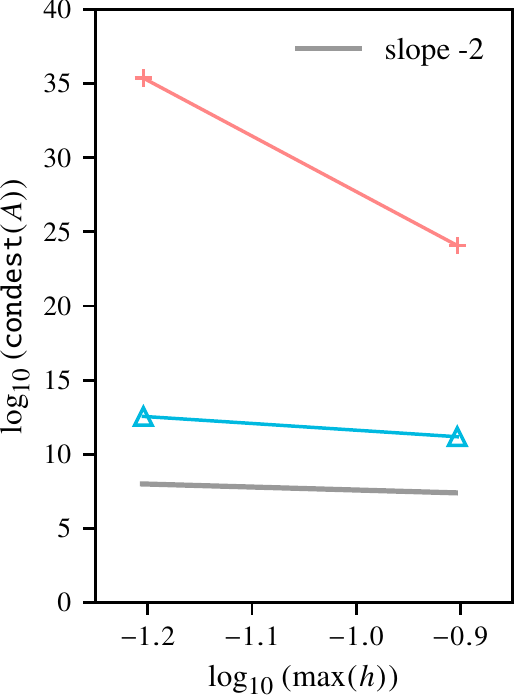}
    \caption{3D case.}
    \label{fig:conv-test-condest-b}
  \end{subfigure}
  \caption{\secttitle: Scaling of the condition number upon mesh refinement.}
  \label{fig:conv-test-condest}
\end{figure}

\begin{figure}[ht!]
 \centering
 \includegraphics[scale=1]{fig_popcorn_3d_legend}\\
  \begin{subfigure}{0.32\textwidth}
    \includegraphics[scale=1]{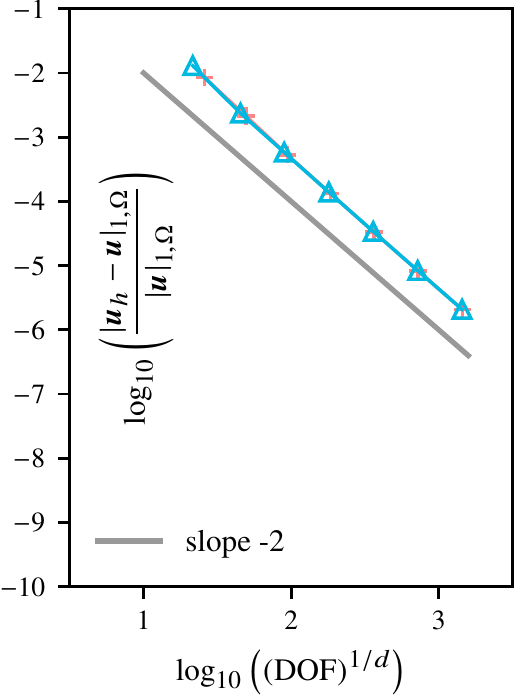}
    \caption{Velocity $H^1$ semi-norm.}
  \end{subfigure}
  \begin{subfigure}{0.32\textwidth}
    \includegraphics[scale=1]{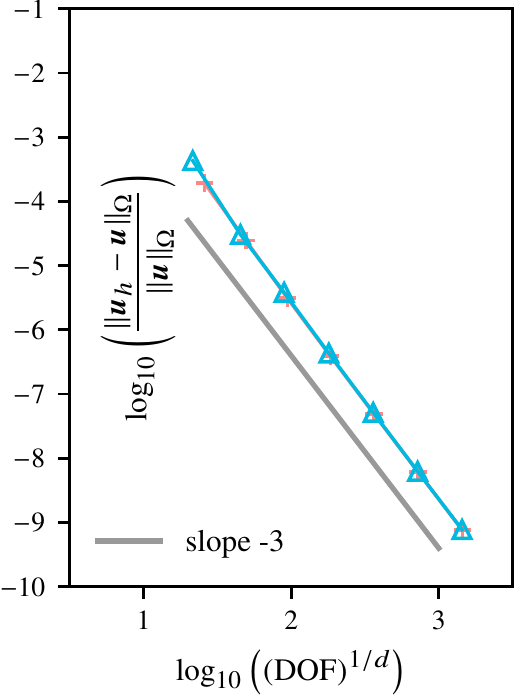}
    \caption{Velocity $L^2$ norm.}
  \end{subfigure}
  \begin{subfigure}{0.32\textwidth}
    \includegraphics[scale=1]{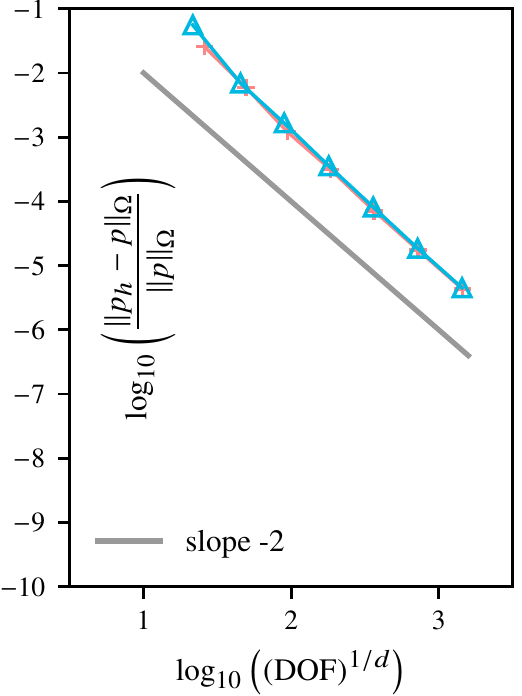}
    \caption{Pressure $L^2$ norm.}
  \end{subfigure}

  \caption{\secttitle: Convergence of the discretization error for the 2D case ($d=2$).}
  \label{fig:conv-test-err-2d}
\end{figure}

\begin{figure}[ht!]
 \centering
 \includegraphics[scale=1]{fig_popcorn_3d_legend}\\
  \begin{subfigure}{0.32\textwidth}
    \includegraphics[scale=1]{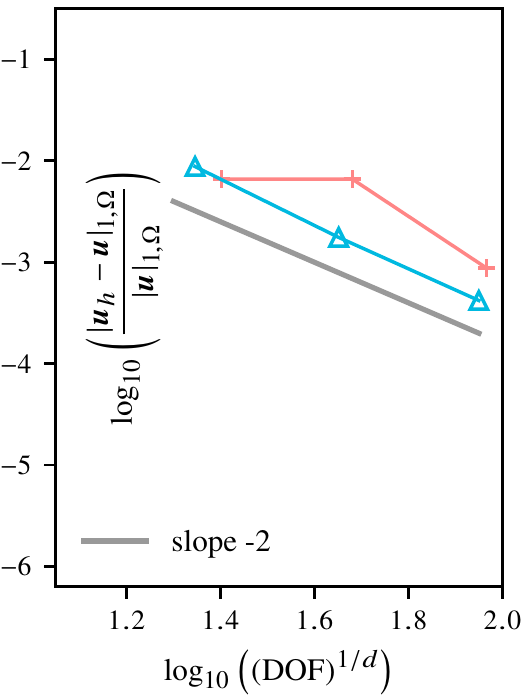}
    \caption{Velocity $H^1$ semi-norm.}
  \end{subfigure}
  \begin{subfigure}{0.32\textwidth}
    \includegraphics[scale=1]{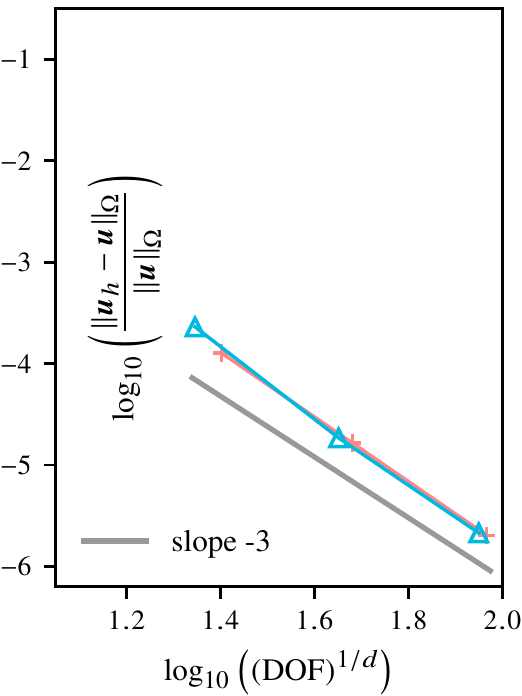}
    \caption{Velocity $L^2$ norm.}
  \end{subfigure}
  \begin{subfigure}{0.32\textwidth}
    \includegraphics[scale=1]{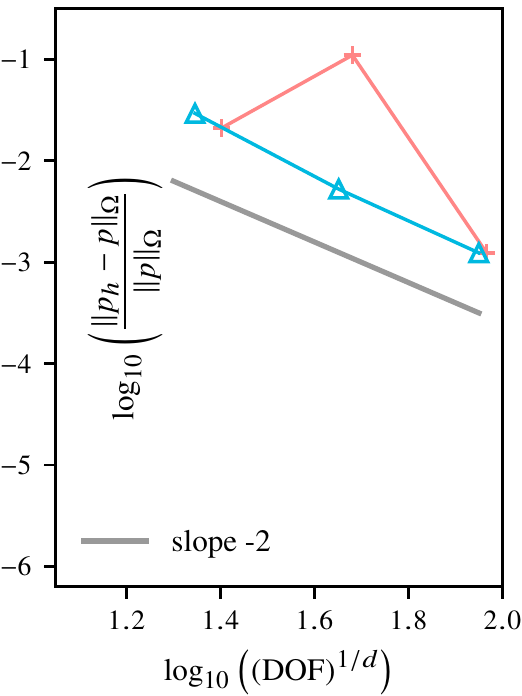}
    \caption{Pressure $L^2$ norm.}
      \label{fig:conv-test-err-3d-p}
  \end{subfigure}

  \caption{\secttitle: Convergence of the discretization error for the 3D case ($d=3$). }
  \label{fig:conv-test-err-3d}
\end{figure}

Fig.~\ref{fig:conv-test-condest}  shows the scaling of the condition number of
the underlying linear systems as the mesh is refined. For the 
\ac{agfe} spaces, the condition number scales as expected in conventional \ac{fe}
methods for body-fitted meshes (i.e., the condition number is proportional to
$h^{-2}$), which confirms the theoretical condition number bound derived in
Sect.  \ref{ssec:cond_nums}.  The same behavior is observed in 2D and 3D
cases. The lines for the 3D case in Fig. \ref{fig:conv-test-condest-b}
have only two points, since we were able to estimate the condition number only
for two of the 3D meshes due to the large amount of memory demanded by the {\tt
condest} function of MATLAB.  The benefit of using cell aggregation is clearly
illustrated in Fig.~\ref{fig:conv-test-condest}.  The standard \ac{fe} spaces
without cell aggregation lead to condition numbers that do not scale
proportional to $h^{-2}$. Theoretically, the condition number can be arbitrary
large  without cell aggregation depending on how cells are cut, which leads in
practice to an erratic scaling of the condition number that reaches large
values, as shown by the red lines in Fig.~\ref{fig:conv-test-condest}.

On the other hand, Figs.~\ref{fig:conv-test-err-2d}
and~\ref{fig:conv-test-err-3d} report the convergence of the $H^1$ semi-norm and $L^2$
norm of the discretization error for the velocity field, and the $L^2$ norm of
the discretization error for the pressure field for the 2D and 3D cases
respectively.  Since we consider standard
($\Qqth(\meshact)$,$\Pqthd(\meshact)$) and aggregated ($\Qqthm$,$\Pqthd$)
velocity-pressure
elements (which corresponds to $2$nd polynomial order for the velocities and
$1$st for the pressures), the optimal convergence orders are  $3$rd order of
convergence for the velocity error measured in the $L^2$ norm, $2$nd order for
the velocity error in the $H^1$ semi-norm, and  $2$nd order for the pressure error
in $L^2$ norm.  The plots show that the \ac{agfe} spaces lead to these
optimal \ac{fe} convergence orders, which in turn confirms the analysis of
Sect.~\ref{ssec:a_pri_err_est}.  Note that the standard (un-aggregated)
\ac{fe} spaces lead to the optimal convergence orders in the 2D case (cf. Fig.
\ref{fig:conv-test-err-2d}).  However, the underlying linear systems are so
ill-conditioned (reaching condition numbers up to $10^{35}$ as previously
showed in Fig.~\ref{fig:conv-test-condest}) that  in general one cannot rely on
the results computed by the linear solver using double precision floating point
arithmetics. { We have encountered some situations where the linear
solver was not able to provide an accurate solution for this reason, see, e.g.,
the red line in Fig. \ref{fig:conv-test-err-3d-p}.}

\renewcommand{\secttitle}{Moving domain experiment}
\subsection{\secttitle}
\label{sec:moving-dom}

In the second numerical experiment, we study the robustness of the unfitted
\ac{fe} formulation with respect to the relative position between the problem
geometry and the background mesh.  To this end, we consider two geometries
whose definition is parametrized by a scalar value $\ell$ (cf.
Fig.~\ref{fig:mov-geometries}). The 2D geometry is a circular cavity, with
radius $R=0.225$ and whose center is located at an arbitrary point on a
diagonal of the unit square (cf. Fig.~\ref{fig:mov-geometries-2d}).  The 3D
domain is again a cavity defined using the popcorn flake geometry (cf.
Fig.~\ref{fig:mov-geometries-3d}). In this case, We scale down the popcorn
flake used in the convergence test (cf. Sect.~\ref{sec:conv-test}) by a
factor or $0.5$ and place it at an arbitrary point of the diagonal of the unit
cube.   In both cases, the position of the bodies is controlled by the value of
the parameter $\ell$ (i.e., the distance between the center of the body and a
selected vertex of the square/cube). As the value of $\ell$ varies, the objects
move and their relative position with respect to the background  mesh changes.
In this process, arbitrary small cut cells can show up, leading to potential
ill conditioning problems. In this experiment, we consider a background mesh that
discretizes the unit square/cube with $2^m$ elements per direction, being $m=5$
for the 2D case and $m=4$ for the 3D case.

\begin{figure}[ht!]
  \centering

  \begin{subfigure}[b]{0.32\textwidth}
      \centering
      \includegraphics[width=0.85\textwidth]{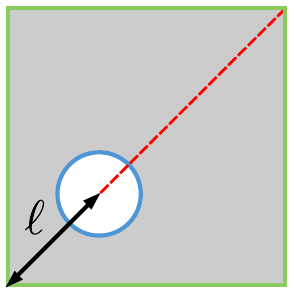}
        \caption{2D case.}
    \label{fig:mov-geometries-2d}
  \end{subfigure}
  \hspace{2em}
  \begin{subfigure}[b]{0.32\textwidth}
      \includegraphics[width=0.99\textwidth]{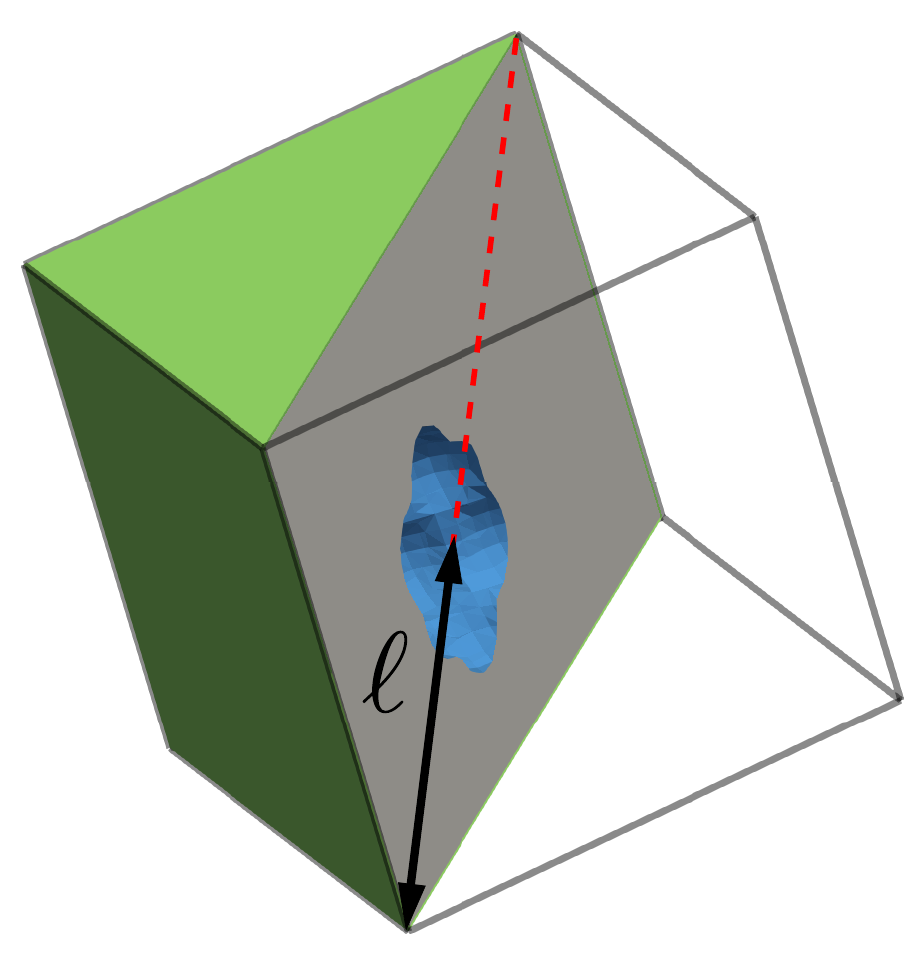}
    \caption{3D case (internal view).}
    \label{fig:mov-geometries-3d}
  \end{subfigure}
\caption{\secttitle: View of the problem geometries.}
\label{fig:mov-geometries}
\end{figure}

Fig.~\ref{fig:mov-condest} shows the condition number estimate of the
underlying linear systems versus $\ell$. The plot is generated using a sample
of 200 different values of $\ell$. It is observed  that the \ac{agfe}
spaces lead to condition numbers that are nearly independent of the value of
$\ell$, which shows that the \ac{agfem} is very robust
regardless how cells are cut. The benefit of using aggregation is clearly
demonstrated here by observing the results associated to the standard \ac{fe}
spaces. In that case, the condition numbers are very sensitive to the position
of the geometry and reach very high values (condition number greater than
$10^{35}$ in the 3D case).

\begin{figure}[ht!]
  \centering
   \includegraphics[scale=1]{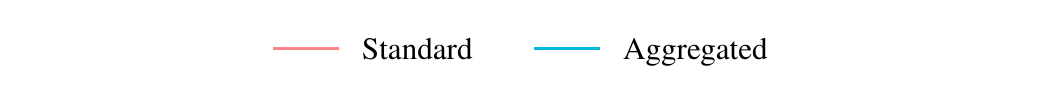}\\
  \begin{subfigure}{0.45\textwidth}
    \includegraphics[scale=1]{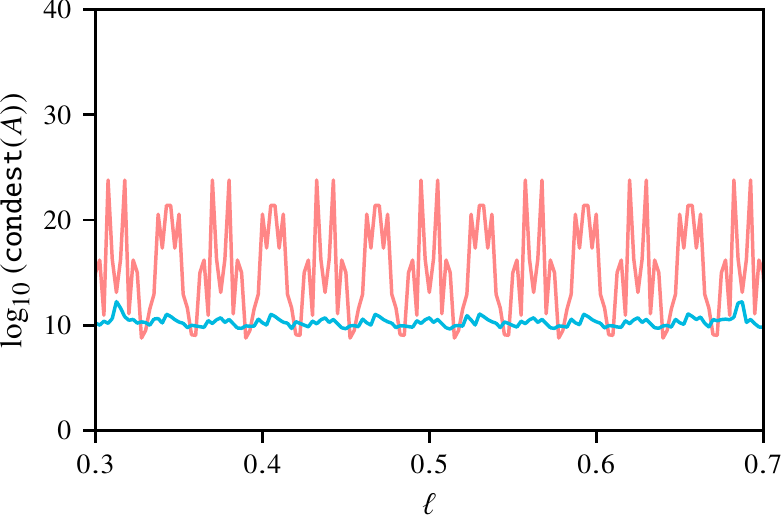}
        \caption{2D case.}
    \label{fig:mov-condest-2d}
  \end{subfigure}
  \hspace{2em}
  \begin{subfigure}{0.45\textwidth}
    \includegraphics[scale=1]{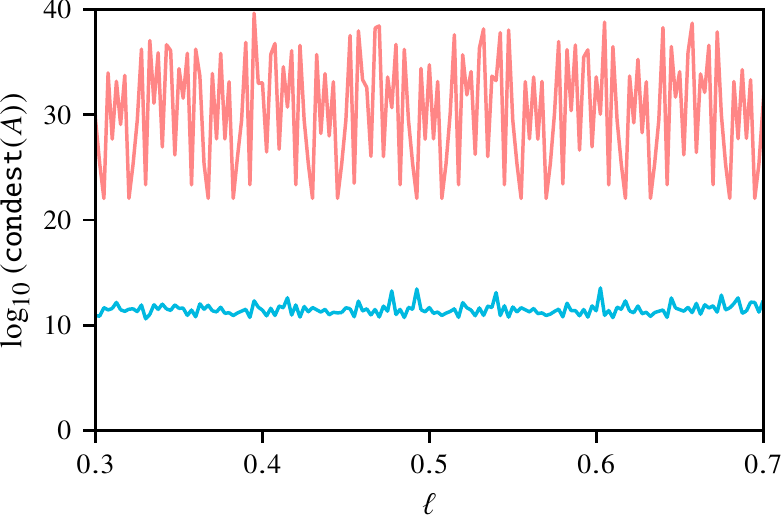}
    \caption{3D case.}
    \label{fig:mov-condest-3d}
  \end{subfigure}
\caption{\secttitle: Condition number vs. domain position.}
\label{fig:mov-condest}
\end{figure}

\renewcommand{\secttitle}{Complex 3D examples}
\subsection{\secttitle}
\label{sec:misc-3d}

We conclude the numerical examples with the simulation of two complex
geometries in order to show that the cell aggregation can be effectively used
also in more complex settings. The first complex example is the simulation of a
Stokes flow around a set of randomly spherical obstacles (see
Fig.~\ref{fig:complex-case1-sol}). The (fluid) domain is the set difference of
the unit cube $[0,1]^3$ and the spherical obstacles. We consider homogeneous
Dirichlet conditions (no-slip conditions) in the surfaces of the spherical
obstacles using Nitsche's method. The inflow boundary is the face $x=0$ of the
unit cube (see Fig.~\ref{fig:complex-case1-sol-a}), where we impose a
prescribed polynomial inflow velocity profile with value:
\begin{equation} \u = (  10y(y-1)z(z-1)  ,\ 0,\
0),\quad(x,y,z)\in\Gamma^\mathrm{in}=\{0\}\times[0,1]^2.
\end{equation}
The outflow boundary is the face $x=1$ of the unit cube, where we impose
homogeneous Neumann boundary conditions. We impose homogeneous Dirichlet
conditions on the remaining faces of the cube. The problem is simulated using a
background Cartesian mesh defined on the cube with $2^5$ elements per
direction. The obtained numerical solution is plotted in
Figs.~\ref{fig:complex-case1-sol-b} and~\ref{fig:complex-case1-sol-c}.  Note
that the approximation of the velocities clearly conforms to the
unfitted surfaces even though the interpolation is slightly coarsened near
these surfaces by the cell aggregation.

\begin{figure}[ht!]
  \centering
  \begin{subfigure}[c]{0.32\textwidth}
      \centering
      \tikz{\fill[myblue]  (0,0) rectangle (2.0em,0.5em);} {\small Inflow}
      \hspace{1em}
      \tikz{\fill[mygreen]  (0,0) rectangle (2.0em,0.5em);} {\small Outflow}
  \end{subfigure}
  \begin{subfigure}[c]{0.32\textwidth}
    \centering
    \begin{tikzpicture}
      \node[inner sep=0pt] (cbar) at (0,0) {\includegraphics[width=0.4\textwidth]{fig_colorbar_h.pdf}};
      \node[anchor=west] at (cbar.east) {\small $75.0$};
      \node[anchor=east] at (cbar.west) {\small $0.0$};
    \end{tikzpicture}
  \end{subfigure}
  \begin{subfigure}[c]{0.32\textwidth}
    \centering
    \begin{tikzpicture}
      \node[inner sep=0pt] (cbar) at (0,0) {\includegraphics[width=0.4\textwidth]{fig_colorbar_h.pdf}};
      \node[anchor=west] at (cbar.east) {\small $53.0$};
      \node[anchor=east] at (cbar.west) {\small $-1.4$};
    \end{tikzpicture}
  \end{subfigure}

  \begin{subfigure}[b]{0.32\textwidth}
      \centering
      \includegraphics[width=0.99\textwidth]{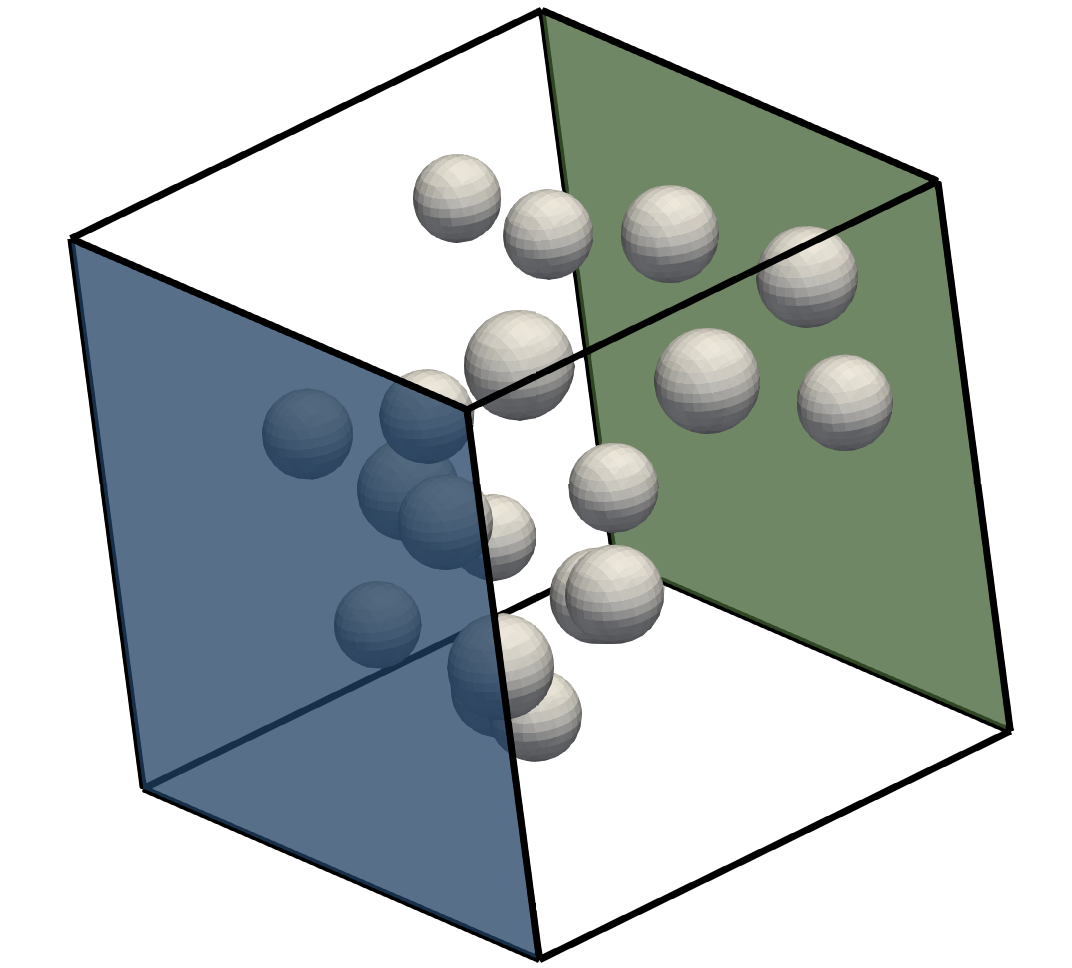}
      \caption{Problem geometry.}
    \label{fig:complex-case1-sol-a}
  \end{subfigure}
  \hspace{0em}
  \begin{subfigure}[b]{0.32\textwidth}
    \centering
    \includegraphics[width=0.99\textwidth]{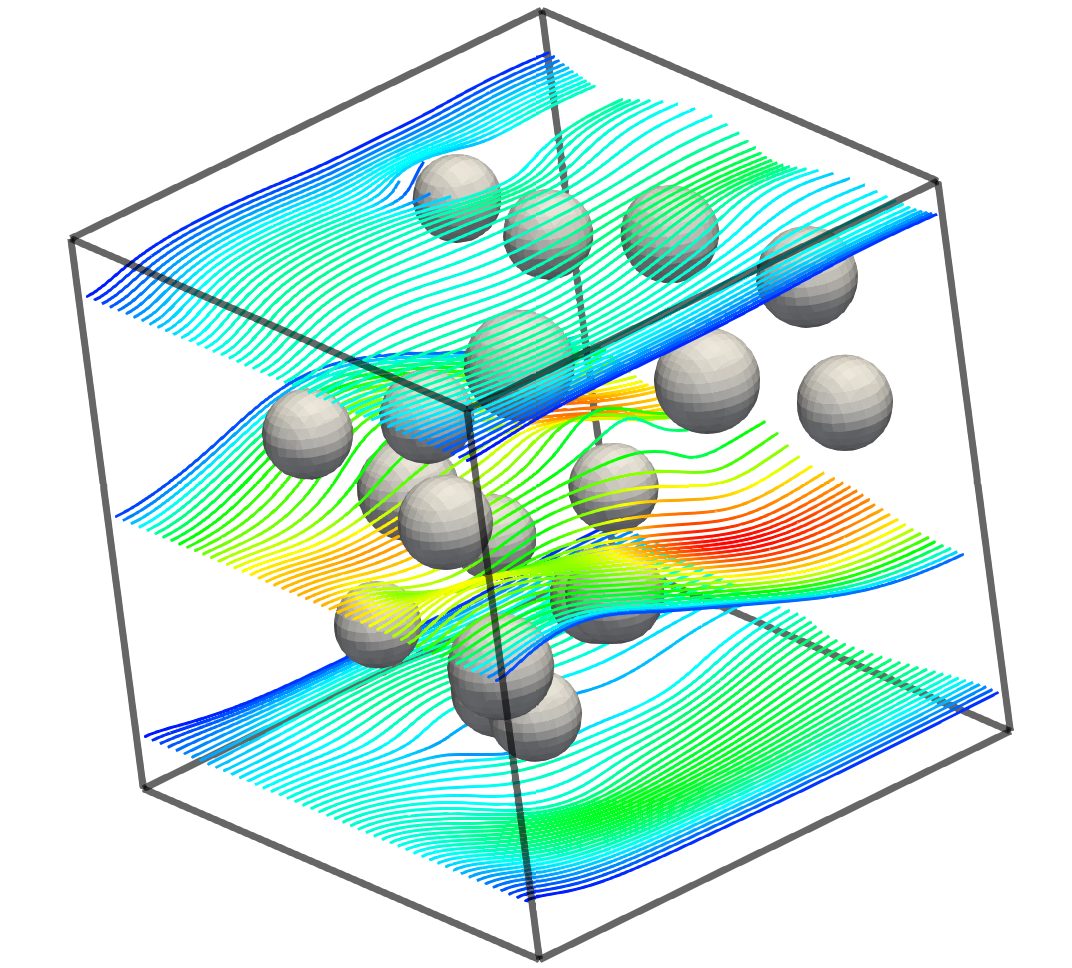}
    \caption{Velocity (magnitude).}
    \label{fig:complex-case1-sol-b}
  \end{subfigure}
  \begin{subfigure}[b]{0.32\textwidth}
    \includegraphics[width=0.99\textwidth]{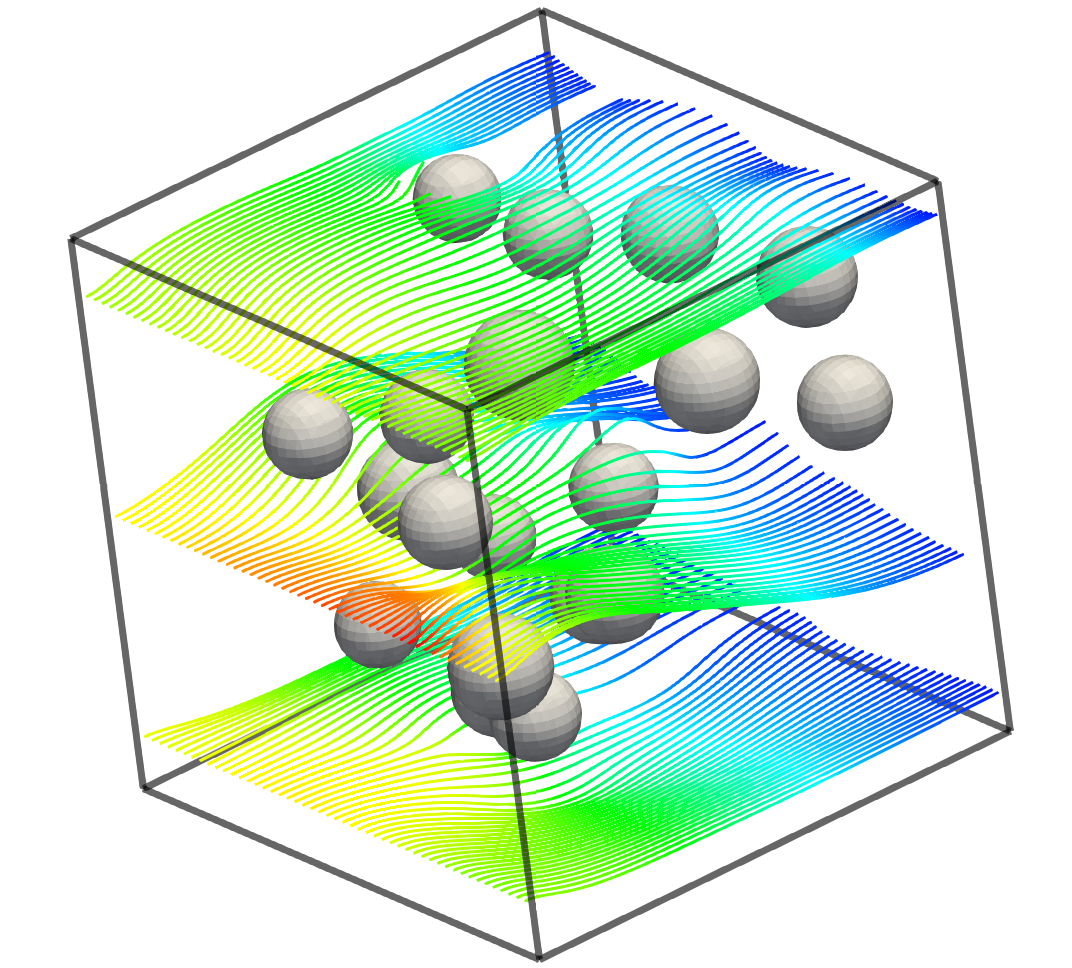}
    \caption{Pressure.}
    \label{fig:complex-case1-sol-c}
  \end{subfigure}
  \caption{\secttitle: Problem geometry and numerical solution  for the stokes
  flow around spherical obstacles (streamlines colored by velocity magnitude
  and pressure).}
  \label{fig:complex-case1-sol}
\end{figure}

The second complex example is a Stokes flow inside a spiral pipe (see
Fig.~\ref{fig:complex-case2-sol}). The radius of the tubular cross section of
the pipe is $0.1$, whereas the radius of the spiral central axis is $0.875$. We
impose homogeneous Dirichlet conditions on the walls of the spiral. The inflow
boundary is one of the two terminal cross sections of the pipe, i.e., the disk
of center $C=(0,0.875,0.86)$ and radius $R=0.1$ (see
Fig.~\ref{fig:complex-case2-sol-a}). On the inflow boundary we impose a
parabolic velocity profile with value:
\begin{equation} \u = ( 10 - 10 \frac{r^2}{R^2}  ,\ 0,\ 0), \end{equation}
where $r\in[0,R]$ is the distance between a point $x$ in the inflow boundary
and the center $C$.  Homogeneous Neumann boundary conditions are considered on
the outflow boundary. Like in the previous example, the problem is simulated
using a uniform Cartesian mesh of the unit cube with $2^5$ elements at each
direction. The
results are showed in Figs.~\ref{fig:complex-case2-sol-b}
and~\ref{fig:complex-case2-sol-c}. Note that, even though this is a very
challenging example for the cell-aggregation strategy because the surface to
volume ratio is very high, the computed results reproduce a perfectly laminar
velocity field that flows smoothly through the spiral pipe.

\begin{figure}[ht!]
  \centering
  \begin{subfigure}[c]{0.32\textwidth}
      \centering
      \tikz{\fill[myblue]  (0,0) rectangle (2.0em,0.5em);} {\small Inflow}
      \hspace{1em}
      \tikz{\fill[mygreen]  (0,0) rectangle (2.0em,0.5em);} {\small Outflow}
  \end{subfigure}
  \begin{subfigure}[c]{0.32\textwidth}
    \centering
    \begin{tikzpicture}
      \node[inner sep=0pt] (cbar) at (0,0) {\includegraphics[width=0.4\textwidth]{fig_colorbar_h.pdf}};
      \node[anchor=west] at (cbar.east) {\small $100$};
      \node[anchor=east] at (cbar.west) {\small $1.8$};
    \end{tikzpicture}
  \end{subfigure}
  \begin{subfigure}[c]{0.32\textwidth}
    \centering
    \begin{tikzpicture}
      \node[inner sep=0pt] (cbar) at (0,0) {\includegraphics[width=0.4\textwidth]{fig_colorbar_h.pdf}};
      \node[anchor=west] at (cbar.east) {\small $2.8\cdot10^{4}$};
      \node[anchor=east] at (cbar.west) {\small $-2.8\cdot10^{-5}$};
    \end{tikzpicture}
  \end{subfigure}

  \begin{subfigure}[b]{0.32\textwidth}
      \centering
      \includegraphics[width=0.99\textwidth]{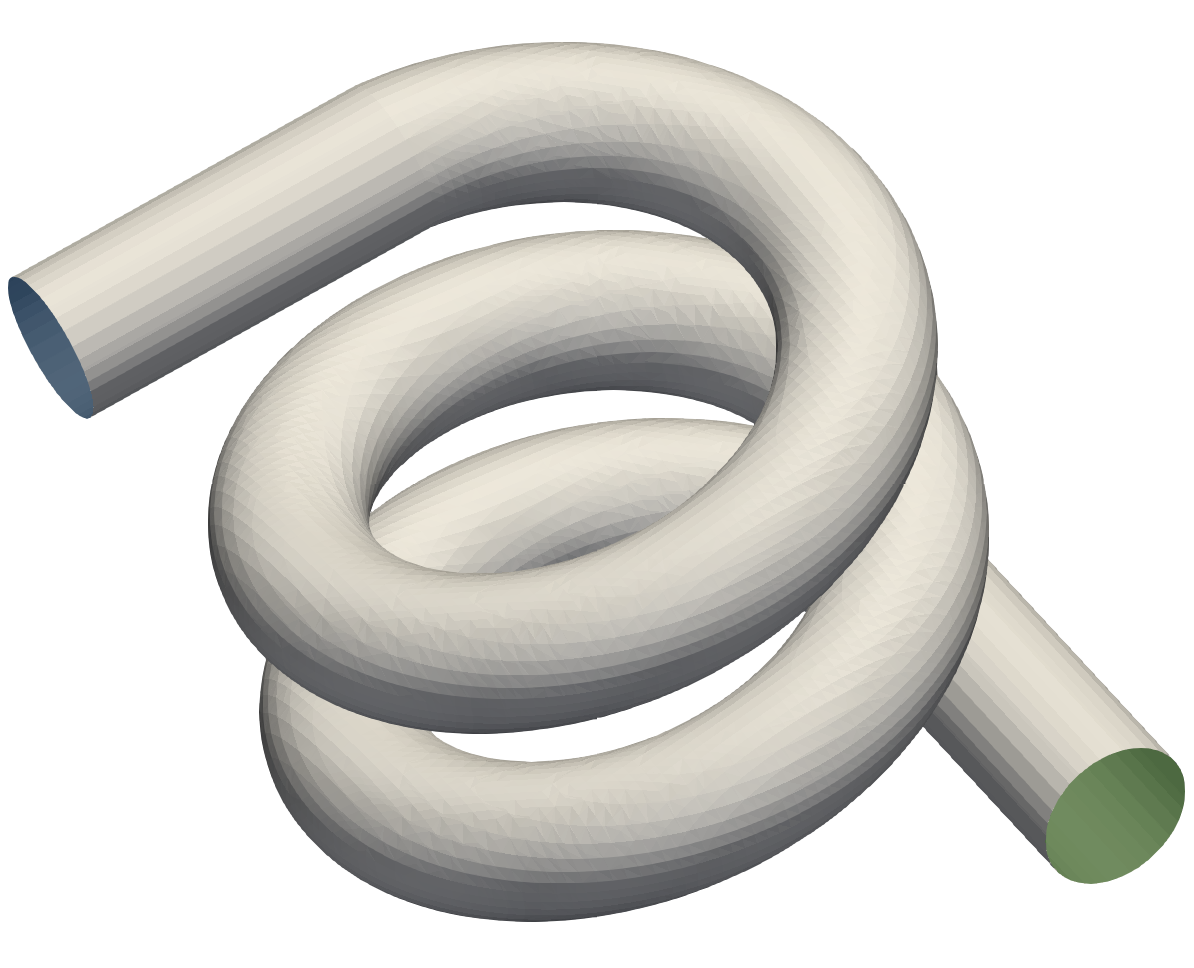}
      \caption{Problem geometry.}
    \label{fig:complex-case2-sol-a}
  \end{subfigure}
  \hspace{0em}
  \begin{subfigure}[b]{0.32\textwidth}
    \centering
    \includegraphics[width=0.99\textwidth]{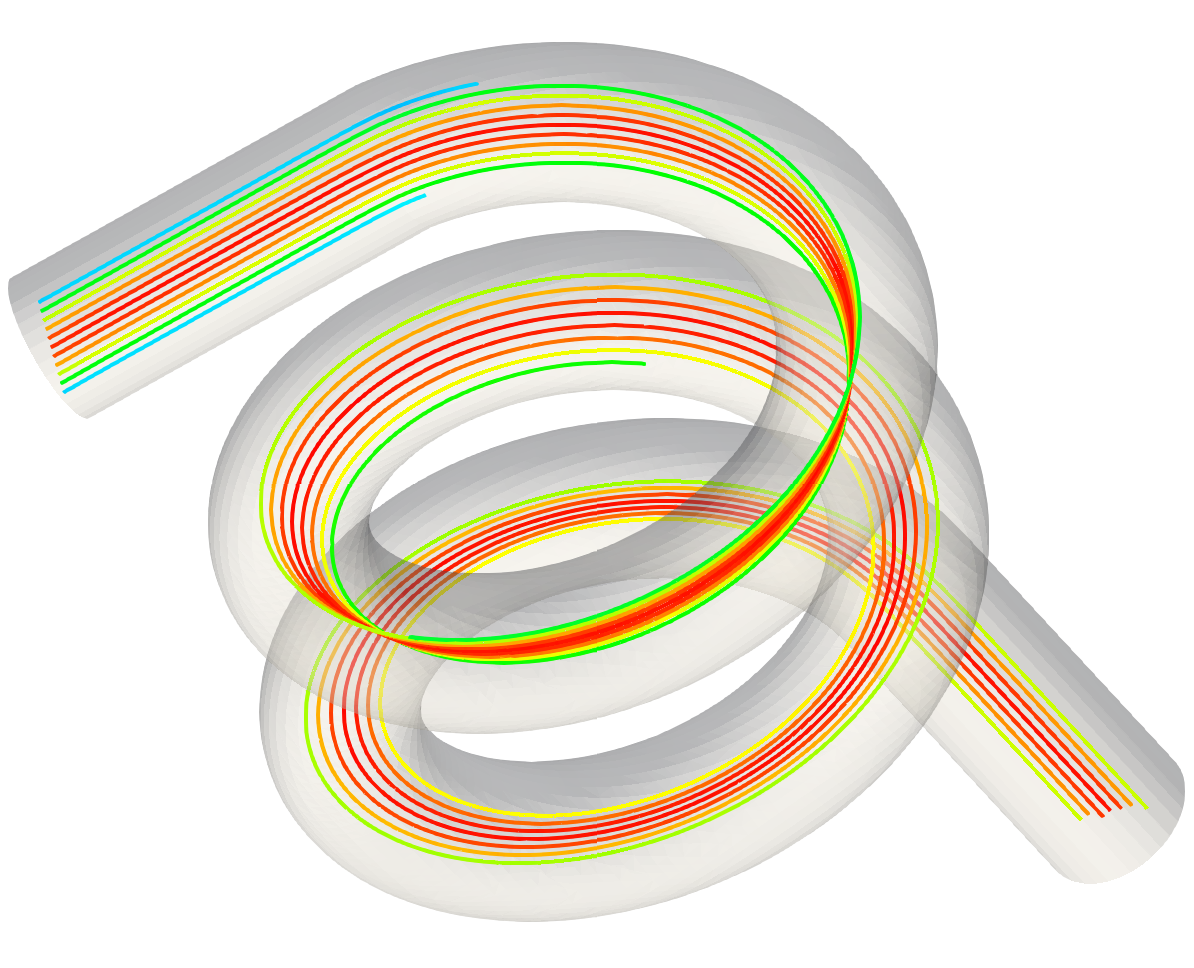}
    \caption{Velocity (magnitude).}
    \label{fig:complex-case2-sol-b}
  \end{subfigure}
  \begin{subfigure}[b]{0.32\textwidth}
    \includegraphics[width=0.99\textwidth]{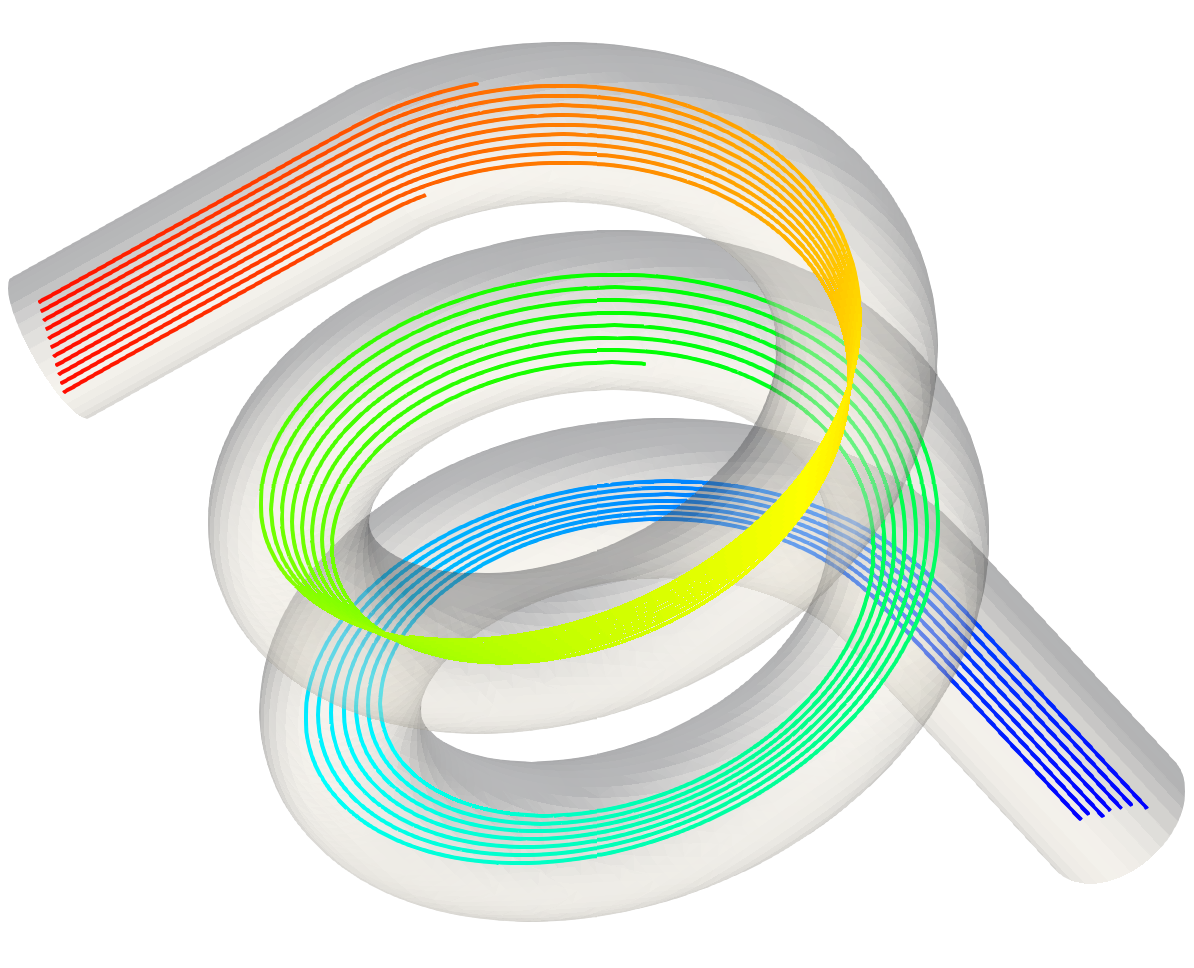}
    \caption{Pressure.}
    \label{fig:complex-case2-sol-c}
  \end{subfigure}
  \caption{\secttitle: Problem geometry and numerical solution  for the stokes
  flow in a spiral pipe (streamlines colored by velocity magnitude and
  pressure).}
  \label{fig:complex-case2-sol}
\end{figure}

\section{Conclusions}\label{sec:concl}

In this work, we have developed  mixed \acp{agfem} for the approximation of the Stokes problem on unfitted meshes. We have considered the standard extension operator for the definition of \ac{agfe} spaces and a new one that relies on the extension of the serendipity component only (for hex meshes). A cell aggregation algorithm allows one to start with a \ac{fe} mesh and create an aggregate partition with some desired properties. The \ac{agfe} space is readily computed from a typical \ac{fe} space plus simple cell-wise constraints.

For the sake of conciseness, we have considered as starting point mixed \ac{fe} methods on body-fitted meshes with discontinuous pressure spaces on hexahedral meshes, considering both the standard and serendipity extension for the velocity field. We have performed an abstract stability analysis that relies on a set of assumptions, in order to prove a weak inf-sup condition for mixed \ac{agfe} spaces. Such analysis shows the potential deficiency of the unfitted discrete inf-sup for such spaces. It allows us to identify a subset of aggregates/facets (\emph{close} to the boundary), coined \emph{improper} aggregates/facets; these subsets depend on the  mixed \ac{agfe} space being used.

Based on the abstract stability analysis, we have defined two different algorithms that satisfy the required assumption for having stability. The first algorithm relies on a standard velocity extension plus interior (residual-based) stabilization in improper aggregates and pressure jump stabilization on improper facets. The second algorithm relies on the serendipity extension for the velocity field components and pressure jump stabilization on improper facets. For these algorithms, a complete numerical analysis proves stability, \emph{a priori} error estimates, and condition number bounds that are not affected by the small cut cell problem.

A complete set of numerical experiments bears out the numerical analysis. Finally, the mixed \ac{agfem} is applied to two problems with non-trivial geometries, viz., free flow in a medium with inclusions and confined flow in a spiral.

\bibliographystyle{myabbrvnat}
\bibliography{art030}

\end{document}